\def\parag#1{\paragraph{\textit{#1}}}
\newcommand{\R}{\mathbb{R}}
\newcommand{\Z}{\mathbb{Z}}
\newcommand{\LL}{{\rm{L}}}
\newcommand{\CC}{{\rm{C}}}
\newcommand{\HH}{{\rm{H}}}
\newcommand{\Besov}{{\rm B}}
\newcommand{\lt}{}
\newcommand{\rt}{}
\def\[{[\![}
\def\]{]\!]}
\newcommand{\dd}{{\rm{d}}}
\newcommand{\Supp}{{\rm{supp}}}
\newcommand{\ee}{{\rm{e}}}
\newcommand{\ii}{{\rm{i}}}
\newcommand{\Id}{{\rm Id}}
\newcommand{\FF}{\mathcal{F}}
\newcommand{\Boule}{{\rm B}}
\def\Xint#1{\mathchoice
  {\XXint\displaystyle\textstyle{#1}}%
  {\XXint\textstyle\scriptstyle{#1}}%
  {\XXint\scriptstyle\scriptscriptstyle{#1}}%
  {\XXint\scriptscriptstyle\scriptscriptstyle{#1}}%
  \!\int}
\def\XXint#1#2#3{{\setbox0=\hbox{$#1{#2#3}{\int}$}
    \vcenter{\hbox{$#2#3$}}\kern-.5\wd0}}
\def\fint{\Xint-}
\renewcommand{\epsilon}{\varepsilon}
\renewcommand{\tilde}{\widetilde}
\newcommand{\langl}{\langle}
\newcommand{\rangl}{\rangle}
\newcommand{\et}{\quad\text{and }}
\newcommand{\dans}{\quad\text{in }}
\newcommand{\si}{\quad\text{if }}
\newcommand{\pour}{\quad\text{for }}
\newcommand{\pourtout}{\quad\text{for all }}
\newcommand{\ubar}{\bar{u}}
\newcommand{\vbar}{\bar{v}}
\newcommand{\abar}{\bar{a}}
\newcommand{\rstar}{r_\star}
\newcommand{\QU}{\mathcal{Q}}
\newcommand{\barC}{{C}}
\newcommand{\chiu}{u}
\newcommand{\Cmu}{c}
\newtheorem{theorem}{Theorem}[section]
\newtheorem{corollary}[theorem]{Corollary}
\newtheorem{lemma}[theorem]{Lemma}
\newtheorem{proposition}[theorem]{Proposition}
\newtheorem*{proposition*}{Proposition}
\newtheorem*{theorem*}{Theorem}
\newtheorem*{lemma*}{Lemma}
\newtheorem*{corollary*}{Corollary}
\newtheoremstyle{TheoremNum}
    {\topsep}{\topsep}              		%%% space between body and thm
    {\itshape}                      		%%% Thm body font
    {}                              		%%% Indent amount (empty = no indent)
    {\bfseries}                     		%%% Thm head font
    {.}                             		%%% Punctuation after thm head
    { }                             		%%% Space after thm head
    {\thmname{#1}\thmnote{ \bfseries #3}}	%%% Thm head spec
\theoremstyle{TheoremNum}
\theoremstyle{remark}
\title[The annealed Calder\'on-Zygmund estimate in homogenization]{The annealed Calder\'on-Zygmund estimate as convenient tool in quantitative stochastic homogenization}
\author{Marc Josien and Felix Otto}
\address{Max-Planck-Institut f\"ur Mathematik in den Naturwissenschaften, Inselstrasse 22, 04103 Leipzig, Germany}
\begin{document}
  
\maketitle
\footnotetext[1]{Max-Planck-Institut f\"ur Mathematik in den Naturwissenschaften, Inselstrasse 22, 04103 Leipzig, Germany}

\section*{Abstract}

This article is about the quantitative homogenization theory
of linear elliptic equations in divergence form with random coefficients. 
We derive gradient estimates on the homogenization error, 
i.~e.~on the difference between the actual solution and the two-scale
expansion of the homogenized solution, both in terms of strong norms
(\textit{oscillation}) and weak norms (\textit{fluctuation}). These estimates
are optimal in terms of scaling in the ratio between the microscopic 
and the macroscopic scale.

%\smallskip

The purpose of this article is to highlight the usage of the recently
introduced \textit{annealed} Calder\'on-Zygmund (CZ) estimates in obtaining the above, 
previously known, error estimates. 
Moreover, the article provides a novel proof of these annealed CZ estimate
that completely avoids quenched regularity theory, but rather relies on functional analysis.
It is based on the observation that even on the level of operator norms, 
the Helmholtz projection is close to the one for the homogenized coefficient (for which annealed
CZ estimates are easily obtained).

%\smallskip

In this article, we strive for simple proofs, and thus restrict ourselves to
ensembles of coefficient fields that are local transformations of Gaussian random fields
with integrable correlations and H\"older continuous realizations. 
As in earlier work, we use the natural objects from the general theory of homogenization,
like the (potential and flux) correctors and the homogenization commutator. 
Both oscillation and fluctuation estimates rely on a sensitivity calculus, 
\textit{i.e.} on estimating how sensitively the quantity of interest does depend on an 
infinitesimal change in the coefficient field, which is fed into the Spectral Gap inequality.
In this article, the annealed CZ estimate is the only form in which elliptic
regularity theory enters.

\parag{Keywords:} Stochastic homogenization, Calder\'on-Zygmund estimates, corrector estimates, oscillations, fluctuations

\parag{AMS classification:}
35B27, % {\red Homogenization in context of PDEs; PDEs in media with periodic structure}
35R60, % {\red PDEs with randomness, stochastic partial differential equations}
35J15, % {\red Second-order elliptic equations}
35B45, % {\red A priori estimates in context of PDEs}
42B15 % {\red Multipliers for harmonic analysis in several variables}

\newpage

{
\setlength{\parskip}{0\baselineskip}
\tableofcontents
}

\newpage

\section{Introduction}

\subsection{The role of annealed CZ estimates$^{\ref{1}}$}
\footnotetext[1]{\label{1} The jargon of quenched and annealed is common in models of statistical physics, like the Ising model with random interaction strength or, more relevant for us, a random walk in a random environment.
We use it freely here. 
Ultimately, the words come from metallurgy.}
We start with a preamble on language. Homogenization capitalizes 
on a separation between two length scales: a micro-scale set by the coefficient field $a$
and a macro-scale set by the other data (\textit{e.g.} domain, right-hand side). 
In the case of stochastic homogenization, 
the micro-scale is set by a correlation length of the ensemble $\langle\cdot\rangle$;
in this article, it is of the same order as the (potentially much smaller) 
scale up to which the coefficient field $a$ varies little.
In this article, the domain is always the whole $d$-dimensional space $\mathbb{R}^d$, 
so the macro-scale is just set by the scale intrinsic to the right-hand side.
Throughout the entire article, the micro-scale is set to be unity 
-- the reader will (almost) nowhere find the $\epsilon$ dear to the mathematical
homogenization literature!

%\smallskip

There is an intimate connection between elliptic regularity theory 
and quantitative stochastic homogenization. This became already apparent 
in the first work on error estimates \cite{Yurinskii}, where Nash's heat kernel decay
for general uniformly elliptic coefficient fields $a$ is used. 
Like for much of the last decade's contributions, the present article
follows the strategy introduced in \cite{NaddafSpencer_1998} of using a Spectral Gap inequality, 
which is assumed to hold for the ensemble $\langle\cdot\rangle$ under consideration 
and quantifies its ergodicity, to capture stochastic cancellations. 
It is in the same article \cite{NaddafSpencer_1998} 
where CZ estimates for $-\nabla\cdot a\nabla$
in form of the $\LL^4_{\mathbb{R}^d}$-boundedness of the $a$-Helmholtz projection 
$-\nabla(-\nabla\cdot a\nabla)^{-1}\nabla\cdot$ entered the stage
as a part of the sensitivity estimate.
In that work, this boundedness was guaranteed by assuming a small ellipticity contrast, 
appealing to Meyers' perturbative argument. 
In \cite{GloriaOtto_2011}, the assumption of small ellipticity was removed by 
using de Giorgi's theory in order to obtain optimal decay estimates on the Green function $G(a;x,y)$, upgraded to optimal decay of its mixed derivative with help of Caccioppoli's estimate. 

%\smallskip

Those were \textit{quenched} estimates on the \textit{quenched} Green function $G(a;x,y)$, that is, estimates valid for almost every realization $a$ of the coefficient field,
but they required some macroscopic spatial averaging in case of the derivatives. 
Optimal {\it pointwise} decay estimates on the mixed 
derivative of the Green function were first obtained in \cite{Marahrens_Otto};
these estimates can only hold with high probability, and were formulated as \textit{annealed}
estimates
in the sense that all algebraic moments with respect to the ensemble
$\langle\cdot\rangle$ have the desired behavior.
Note that the mixed derivative of the Green function is the kernel of the $a$-Helmholtz
projection. Hence, loosely speaking, the estimates in \cite{Marahrens_Otto} 
yield ``up to a logarithm'' the boundedness of the $a$-Helmholtz projection 
from $\LL^p_{\mathbb{R}^d}(\LL^r_{\langle\cdot\rangle})$ to 
$\LL^p_{\mathbb{R}^d}(\LL^{r'}_{\langle\cdot\rangle})$, with an arbitrarily small loss $r'<r$
in the stochastic integrability. We call this family of estimates 
\textit{annealed CZ estimates}. 

%\smallskip

The result in \cite{Marahrens_Otto} showed that with overwhelming probability,
the random Green function has no worse decay than the constant-coefficient one.
This emergence of large-scale regularity is well-known in periodic homogenization 
since the work \cite{AvellanedaLin} (see the recent monograph \cite{Shen}), where like in classical regularity theory, 
a Campanato iteration propagates H\"older regularity over dyadic scales, 
but unlike classical theory, the regularity of the homogenized operator 
is propagated {\it downward} in scales, 
appealing to qualitative homogenization itself. In \cite{Armstrong},
this approach was adapted to the random case, by substituting the compactness
argument by a more quantitative one, establishing the pivotal large-scale Lipschitz estimate. 
This was refined to large-scale quenched $\CC^{\alpha}$ (Schauder) estimates for the $a$-Helmholtz
projection in \cite{Gloria_Neukamm_Otto_2019}. Here, by a large scale quenched regularity theory we mean
that there exists a stationary random radius $\rstar\sim 1$ starting
from which the theory holds almost surely (which is reminiscent of the 
random time scale starting from which Euclidean heat kernel decay holds in percolation theory). 
In \cite{Armstrong_CZ_2016}, the large-scale Lipschitz estimate was used to establish large-scale quenched CZ estimates, which however had a small loss in terms of spatial integrability. The optimal large-scale quenched CZ theory, that is, boundedness of the $a$-Helmholtz projection in $\LL^p_{\mathbb{R}_x^d}(\LL^2_{\Boule_{\rstar(x)}(x)})$ (loosely speaking,
$\LL^2$ up to scales $\rstar$, $\LL^p$-from scales $\rstar$ onwards) was first obtained
in \cite{Gloria_Neukamm_Otto_2019}. A simpler proof, based on modern kernel-free approaches to 
classical CZ theory, may be found in \cite{Armstrong_book_2018} and \cite[Proposition 6.4]{DuerinckxOtto_2019}. 

%\smallskip

While in many error estimates of stochastic homogenization, weighted $\LL^{2+\epsilon}$-estimates
are an alternative to $\LL^p$-estimates,
the usage of CZ estimates seems crucial for the theory of fluctuations
as conceived in \cite{DuerinckxGO_2016}, where again $\LL^4_{\mathbb{R}^d}$ comes
up naturally. As worked out in \cite{DuerinckxOtto_2019},
in particular in the elaborated case of a higher-order theory of fluctuations considered there,
the annealed version of CZ estimates is very convenient. 
In that paper, see \cite[Section 6]{DuerinckxOtto_2019}, the annealed estimates were derived from the quenched ones, appealing to the good stochastic integrability of $\rstar$ established in \cite{Gloria_Neukamm_Otto_2019}, and appealing to the same tool that was used to derive the quenched estimates from the Lipschitz estimates.

%\smallskip

In this paper, we completely avoid the large-scale
quenched regularity theory, be it Lipschitz, Schauder, or CZ.
We derive the annealed CZ theory in a single step by, loosely speaking,
establishing that the $a$-Helmholtz projection is close in the corresponding {\it operator norm}
to the Helmholtz projection for the homogenized coefficient $\abar$. This amounts 
to a particularly strong version of an estimate of the homogenization error.
Indeed, on the 
level of the Helmholtz projection, which itself has no regularizing effect,
the homogenization error has a chance of being small
only if the r.~h.~s.~lives on scales $\gg 1$. Estimating the homogenization error
in an operator norm thus requires splitting the r.~h.~s.~into a large-scale part
and its remainder. This is best done \textit{intrinsically} and here is carried out
by considering the family of elliptic operators with massive term 
$\frac{1}{T}-\nabla\cdot a\nabla$, which provides the flexibility of choosing 
a length scale $\sqrt{T}$.
The main challenge arises from the fact that the estimate on the homogenization error
relies on an estimate of the corrector which because of its stochastic nature 
reduces the exponent $r$ in $\LL^p_{\mathbb{R}^d}(\LL^r_{\langle\cdot\rangle})$
and thus prevents a straightforward buckling. The key idea is to trade-in smallness in terms of
$\sqrt{T}$ to compensate the loss of integrability in terms of $r$ in form of a (real)
interpolation argument. We hope that this functional-analytic argument finds applications
in situations where the more PDE-based argument does not apply.

%%%%%%%%%%%%%%%%%%%%%%%%%%%%%%%%%%%%%%%%%%%%%%%%%%%%%%%%%%%%%%%%%%%%%%%%%%%%%%%%%%%%%

\subsection{The developments in quantitative stochastic homogenization}
  Qualitative stochastic homogenization has been studied for forty years \cite{Varadhan_1979,JKO}.
  Nevertheless, as opposed to the periodic framework, where optimal estimates of the homogenization error were obtained quite early (see \textit{e.g.} \cite{AvellanedaLin,Tartar} and the monograph \cite{Shen}), quantitative stochastic homogenization remained long out of reach.
  On a qualitative level, for instance when it comes to the construction
  of the corrector, the periodic case and the random ergodic case can be
  treated in parallel, by lifting the corrector equation to the torus and
  the probability space, respectively. For a quantitative treatment,
  it is crucial that the torus admits a Poincar\'e estimate, while there
  is no analogue in the general ergodic case, so that the qualitative sublinearity
  cannot be easily improved upon.
  
  There has been substantial progress in quantitative stochastic homogenization in the last decade: A suitable substitute for the missing Poincar\'e estimate on probability space has been identified in the mathematical physics literature \cite{ConlonNaddaf,NaddafSpencer_1998} in form of a spectral gap estimate, which quantifies ergodicity. This was taken up and combined with more elliptic regularity theory in \cite{GloriaOtto_2011} to treat, for the first time, the standard non-perturbative situation; \cite{Gloria_Otto,Gloria_Neukamm_Otto_2019} are key papers that pushed this approach further. An alternative approach based on quantified ergodicity in terms of a finite range assumption has been introduced in \cite{Armstrong}, extending the regularity theory \cite{AvellanedaLin} from periodic to random, and culminating in the monograph \cite{Armstrong_book_2018}.

  One merit of homogenization is the reduction in numerical complexity it entails, reducing the need to resolve the micro-scale to a couple of model calculations. Numerics is indeed a major motivation behind a quantitative theory for stochastic homogenization, see the review \cite{Anantharaman_2012}.
  In particular, devising efficient methods for computing the homogenized coefficient from the knowledge of the heterogeneous media has attracted much attention.
  See in particular \cite{Gloria_Otto,Mourrat_2019} for general methods, and the attempts \cite{Minvielle_2015,Fischer_ARMA_2019,CloJoOtXu_2020} to reduce different sources of error (namely the random and the systematic one) in specific settings.

%%%%%%%%%%%%%%%%%%%%%%%%%%%%%%%%%%%%%%%%%%%%%%%%%%%%%%%%%%%%%%%%%%%%%%%%%%%%%%%%%%%%%
\subsection{Organization of the paper}

Next to providing a novel, and self-contained, proof for the annealed CZ estimates,
see Proposition \ref{PropCZ},
the second purpose of this paper is to display their benefit in error estimates.
To this aim, we display how the annealed CZ estimate is crucial in obtaining two natural estimates of the homogenization error on the level of the gradient, namely in strong norms (\textit{oscillations}), \textit{cf.}~Corollary \ref{Cor1}, and in weak norms (\textit{fluctuations}), \textit{cf.}~Proposition \ref{Propfluct}.
These results are not new, see \cite{Gloria_Neukamm_Otto_2019} for oscillations, and \cite{DuerinckxGO_2016} for fluctuations. The usage of the
annealed CZ estimates to derive fluctuation estimates 
was recently introduced in \cite{DuerinckxOtto_2019}. 

%\smallskip

However, the statements given here are more transparent 
and the proofs are simpler because we allow ourselves to work in the simplest class
of ensembles $\langle\cdot\rangle$, 
namely those that can be obtained by a pointwise Lip\-schitz transformation 
of stationary Gaussian ensembles with integrable correlations (which is the crucial large-scale
assumption) and H\"older continuous realizations (which is a convenient small-scale assumption).
Also, presenting the corrector estimates, \textit{cf.}~Proposition \ref{Propcorr}, alongside
with the oscillation and fluctuation estimates allows us to highlight the many similarities:
the use of an annealed CZ estimate in the perturbative regime 
in Proposition \ref{Propcorr}, the explicit representation of the Malliavin
derivatives in Proposition \ref{Propcorr} and \ref{Propfluct} via auxiliary dual problems
on which the annealed CZ estimates are applied, 
and the use of the $\LL^p$-version of the Spectral Gap estimate.

%\smallskip

This presentation, together with the new proof of Proposition \ref{PropCZ}, 
stresses the role of functional analysis techniques for quantitative stochastic 
homogenization, and minimizes the use of finer PDE-ingredients, 
which hopefully resonates well with some audiences.
In this sense, the article has also the nature of a review article
-- in fact, it grew out of a mini-course the second author gave in Toulouse in the spring of 2019.
We now address the content of the individual sections.

%\smallskip

In Section \ref{SecHcv}, we introduce the deterministic part of the setting: 
Next to the uniformly elliptic coefficient fields $a$ on $\mathbb{R}^d$, we recall the notions of the (extended) correctors $(\phi_i,\sigma_i)_{i=1,\cdots,d}$, of the homogenization commutator, and of the two-scale expansion. In order to stress the importance of a few key identities involving these quantities, which will be crucially used in later sections, and for the convenience of the reader not familiar with homogenization, we derive two elementary statements in the spirit of Tartar's $H$-convergence in Proposition \ref{PropHcon}.

%\smallskip

In Section \ref{SecCorrEstim}, we get to the stochastic part of the setting by introducing the class of ensembles $\langle\cdot\rangle$ of coefficient fields $a$ considered here, and derive the two main features that we actually need: 
1) An $\LL^p_{\langle\cdot\rangle}$-version of the Spectral Gap estimate with the carr\'e-du-champs $\LL^2_{\mathbb{R}_x^d}(\LL^1_{\Boule_1(x)})$ (i.e.~$\LL^1$ on scales up to $1$ and $\LL^2$ on scales larger than $1$) 
and 2) classical Schauder and CZ regularity theory for the elliptic operator on scales up to $1$ with overwhelming probability, see Lemma~\ref{LemSG}.

In Section \ref{SecCorrEstim2}, we state and prove the corrector estimates, which take a particularly simple form thanks to Lemma \ref{LemSG}, see Proposition \ref{Propcorr}. 
In particular, this allows for a simple passage from the estimate of spatial averages of the gradient $\nabla \phi_i$ to the estimates of increments of $\phi_i$, and for a simple passage from the flux $q_i:=a(e_i+\nabla\phi_i)$ to $\nabla\sigma_i$. 
Modulo this easy post-processing, Proposition \ref{Propcorr} relies on the estimate of spatial averages of the field/flux pair $(\nabla\phi_i,q_i)$, which requires the buckling of a stochastic and a deterministic estimate. 
The stochastic estimate relies on a representation of the Malliavin derivative of the spatial average with help of the solution $\nabla v$ of a dual problem, the estimate of $\nabla v$ via the perturbative version of the annealed CZ estimate, and the $\LL^p$-version of the Spectral Gap estimate.
The deterministic estimate relies on an elementary but subtle regularity property of $a$-harmonic functions, here applied to the harmonic coordinate $x_i+\phi_i$, which is reminiscent of \cite[Lem.\ 4]{Bella_Giunti_Otto_2017}. 
While none of the elements is novel, the combination here is slightly more efficient than in the existing literature.

In Section \ref{Sec:Osc} , as a simple application of Proposition \ref{Propcorr}, we derive the oscillation estimates, \textit{cf.}~Corollary \ref{Cor1}.
In the case of $\LL^p_{\mathbb{R}^d}$ with $p\not=2$, we use Proposition \ref{PropCZ} in a straightforward way.

%\smallskip

In Section \ref{SecFluctu}, we state and prove the fluctuation estimates, \textit{cf.}~Proposition \ref{Propfluct}. 
In the proof, we stress the similarity with the proof of Proposition \ref{Propcorr}. 
Proposition \ref{PropCZ}, in its non-perturbative part, now enters in a substantial way.

%\smallskip

In Section \ref{SecAnnealed}, we finally establish the annealed CZ estimate,  \textit{cf.}~Proposition \ref{PropCZ}.
In the perturbative regime, this is done via Meyers' argument and does not involve a loss of stochastic integrability. 
As discussed in Section \ref{SS_AnnealedCZ}, we derive this non-perturbative part of Proposition \ref{PropCZ} from Proposition \ref{Propcorr} -- this is not circular, since in the latter proposition, only the perturbative part of Proposition \ref{PropCZ} enters. 
We will need a version of Proposition \ref{PropCZ} for $-\nabla\cdot a \nabla$ replaced by the massive operator $\frac{1}{T}-\nabla\cdot a\nabla$ which will be established along the same lines (see Proposition \ref{PropmassCZ}).
The strategy for the proof of Proposition \ref{PropCZ} is discussed in all detail there.
In Section \ref{SecLocal}, we briefly sketch the alternative strategy relying on quenched CZ estimates.

The appendix contains several auxiliary statements, in particular those that have
to do with the usage of local regularity on scales less than $1$.
  
  \subsection{Notations}
  
    Throughout this article, a few conventions will be used without further notice.
    
    We denote by $e_i$ the canonical basis of $\R^d$.
    We use Einstein's convention for summation.
    The symbol $\cdot$ denotes the scalar product. 
    In particular, $\nabla \cdot$ stands for the divergence operator.
    The divergence of a matrix is taken with respect to the second coordinate, that is,
    \begin{align*}
      e_j \cdot (\nabla \cdot \sigma_i)=\partial_k \sigma_{ijk}.
    \end{align*}
    For simplicity, we omit the parentheses when considering operators of the type $\nabla \cdot a\nabla$: We write $\nabla \cdot a \nabla u$ for $\nabla \cdot ( a \nabla u)$.
    If $f$ and $g$ are vector fields with $m$ and $n$ coordinates in the canonical basis, respectively, we denote by $(f,g)$ the $(m+n)$-dimensional vector field obtained by concatenating the coordinates of $f$ and $g$.
    For simplicity, we omit to specify explicitly in the functional space the dimension of the vector field it refers to. For example, we may abusively write $\LL^2(\R^d)$ instead of $\LL^2(\R^d,\R^d)$.
    
    The symbol ``$\lesssim_{\alpha_1,\cdots,\alpha_n}$'' reads ``$\leq C$ for a constant $C$ depending only on the tuple $(\alpha_1,\cdots,\alpha_n)$ of previously defined parameters''.
    Similarly, the symbol ``$\ll_{\alpha_1,\cdots,\alpha_n} 1$'' (``$\gg_{\alpha_1,\cdots,\alpha_n} 1$'') in an assumption reads ``sufficiently small'' (``sufficiently large'', respectively) in the sense of being below a threshold that depends only on the parameters $\alpha_1,\cdots,\alpha_n$.
    For simplicity, in the course of the proofs, the subscript might be omitted.
  
    We denote by $\Boule_R(x)$ the ball of center $x$ and radius $R$.
    When it is clear from the context, the variables $x$ and $R$ may be omitted (by default, we have $\Boule_R:=\Boule_R(0)$).
    Also, if $\Boule$ is the ball $\Boule_R(x)$ we abusively denote by $\frac{1}{2}\Boule$ the ball $\Boule_{R/2}(x)$.
    The function $\mathds{1}_{\Omega}$ is the characteristic function of a domain $\Omega$.
    When the set of integration is omitted, we use the convention that the integral is taken on the whole space $\R^d$; namely $\int_{\R^d} f:=\int f$.
    Also, unless otherwise stated, the equations we consider are satisfied on $\R^d$.
    We denote the homogeneous H\"older semi-norm and the H\"older norm:
    \begin{align*}
    [ h ]_{\CC^{\beta}(\Omega)}:= \sup_{x,x' \in \Omega} \frac{|h(x)-h(x')|}{|x-x'|^\beta} \et \|h\|_{\CC^{\beta}(\Omega)} = [ h ]_{\CC^{\beta}(\Omega)} + \|h\|_{\LL^\infty(\Omega)}.
    \end{align*}
    
    If $p \in [1,\infty]$ is an exponent, its conjugated exponent is denoted $p^\star$ and is defined by $1/p+1/p^\star=1$.
    
\section{General notions in homogenization}\label{SecHcv}

  \subsection{Correctors, homogenization commutator, and two-scale expansion}

    This article is concerned with the following elliptic equation in divergence form
    \begin{align*}
      \nabla \cdot ( a \nabla u + f )=0 \dans \R^d,
    \end{align*}
    where $a$ is a $\lambda$-uniformly elliptic coefficient field, \textit{i.e.} for some $\lambda>0$, there holds
    \begin{align}
      \label{Ellipticite}
      \xi \cdot a(x) \xi \geq \lambda |\xi|^2  \quad\et \quad \xi \cdot a^{-1}(x) \xi \geq |\xi|^2.
    \end{align}
    While we adopt scalar notation and language, everything remains unchanged if $u$ has values in some finite-dimensional  vector space.
    
    In most general terms, homogenization means relating the heterogeneous coefficient field $a$ to a homogeneous coefficient $\abar$. In homogenization, this does not rely on the smallness of $a-\abar$ in a strong or (usual) weak sense. Almost tautologically, the suitable topology is provided by the notion of $H$-convergence \cite[Def.\ 1.2.15, p.\ 25]{Allaire}. On a purely \textit{algebraic} level, this topology is captured on the level of an $a$-Helmholtz decomposition, giving rise to a scalar potential (inducing the definition of the corrector $\phi_i$) and a vector potential (inducing the definition of the flux corrector $\sigma_i$); for $d\not=3$, the vector potential is replaced by a skew symmetric tensor field (i.~e.~an alternating $(d-2)$-form).
    
    We decompose $(a-\abar)e_i$ into two parts as follows:
    \begin{align}
      \label{Id02}
      (a-\abar) e_i&=- a \nabla \phi_i + \nabla \cdot \sigma_i,
    \end{align}
    in the sense of $a_{ji} - \abar_{ji} = - a_{jk} \partial_k \phi_i + \partial_k \sigma_{ijk}$.
    Here, $\phi_i$ is a scalar field and $\sigma_i$ is a skew-symmetric tensor field:
    \begin{align}
      \label{Id01}
      \sigma_{ijk}&=-\sigma_{ikj}.
    \end{align}
    Equation \eqref{Id02} is an $a$-Helmholtz decomposition of $(a-\abar) e_i$ in the following sense: The first r.\ h.\ s.\ term of \eqref{Id02} is the product of $a$ and a curl-free vector field (which, in case of the trivial topology of $\R^d$, amounts to a gradient field), whereas the second one is a divergence-free vector field.
    This is an $a$-Helmholtz decomposition since the identification of the $1$-form $\nabla \phi_i$ with a $(d-1)$-form is done via the ``metric'' $a$.
    We introduce the analogous objects $(\phi_i^\star,\sigma_i^\star)$ on the level of the transposed coefficient field $a^\star$:
    \begin{align}
      \label{Id02prime}
      (a^\star-\abar^\star) e_i&=- a^\star \nabla \phi^\star_i + \nabla \cdot \sigma^\star_i.
    \end{align}
    
    Decomposition \eqref{Id02} immediately implies the classical equation for $\phi_i$:
    \begin{align}
      \label{Id04}
      -\nabla \cdot a \nabla (x_i+ \phi_i)&=
      \nabla \cdot( \abar e_i + \nabla \cdot \sigma_i )\overset{\eqref{Id01}}{=}0.
    \end{align}
    Thus, the functions $\phi_i$ are called \textit{correctors}, for they correct the $\abar$-harmonic coordinates $x_i$ to $a$-harmonic functions $x_i+\phi_i$. (By \textit{$a$-harmonic} functions, we mean functions $u$ such that $\nabla \cdot a \nabla u=0$.) It is clear that this notion is only useful when the correctors grow sublinearly.
    The corrector $\phi_i$ induces a natural quantity, namely the flux (or current) $q_i$ defined by
    \begin{align}\label{Defqi}
      q_i:= a (e_i+\nabla \phi_i ).
    \end{align}
    
    We call the functions $\sigma_i$ \textit{flux correctors}, for they correct the flux as follows:
    \begin{align}\label{Num:1}
      q_i=a(e_i+\nabla \phi_i)=\abar e_i + \nabla \cdot \sigma_i.
    \end{align}
    Note that \eqref{Id02} does not determine $\sigma$ (not even up to the addition of a constant as in the case of $\phi$).
    In fact, $\sigma_i$, which should be considered as an alternating $(d-2)$-form only determined up to a $(d-3)$-form. A natural choice of gauge is given by $-\Delta \sigma_i = \nabla \times q_i$, where we use the $(d=3)$-notation as an abbreviation for
    \begin{align}
      \label{Id03}
      -\Delta \sigma_{ijk}&=\partial_j q_{ik} - \partial_k q_{ij}.
    \end{align}
    Note that \eqref{Id03} is indeed consistent with \eqref{Id02} in the form of \eqref{Num:1} because in view of $\nabla \cdot q_i=0$ it yields $-\Delta ( \nabla \cdot \sigma_i - q_i )=0$.
    We will henceforth call the pair $(\phi,\sigma)$ \textit{extended correctors}.

    The extended correctors appear in two fundamental identities, which are proved at the end of the section. 
    The first one is at the core of homogenization in form of $H$-convergence: The effective tensor $\abar$ is supposed to relate macroscopic averages of fields to macroscopic averages of fluxes. 
    It is easiest to make this precise on the level of $a$-harmonic functions $u$, for which $\nabla u$ is (up to the sign) the field and $a\nabla u$ the flux, so that the \textit{homogenization commutator} $a\nabla u-\abar\nabla u$    should have small macroscopic averages. This is embodied by the following formula
    \begin{align}\label{1a}
      e_j\cdot(a\nabla u-\abar\nabla u)=-\nabla\cdot((\phi_j^\star a+\sigma_j^\star)\nabla u)
      \quad\mbox{provided}\;\nabla\cdot a\nabla u=0.
    \end{align}
    Indeed, due to its divergence form, macroscopic averages of the r.~h.~s.~are smaller than those of each term on the l.~h.~s., provided of course that the extended correctors grow sublinearly.
    
    The second fundamental identity intertwines the variable-coefficient operator $-\nabla\cdot a\nabla$ with the constant-coefficient operator $-\nabla\cdot \abar\nabla$ via what is called the two-scale expansion, namely
    \begin{align}\label{2scale}
      \tilde u:=(1+\phi_i\partial_i)\bar u,
    \end{align}
    where here and in the sequel we use Einstein's convention of implicit summation over repeated indexes. This intertwining is embodied by the following formula
    \begin{align}\label{1b}
      \nabla\cdot a\nabla(1+\phi_i\partial_i)\bar u=\nabla\cdot\abar\nabla \bar u
      +\nabla\cdot((\phi_ia-\sigma_i)\nabla\partial_i\bar u).
    \end{align}
    Note that for affine $\bar u$, \eqref{1b} turns into the familiar \eqref{Id04}. For general macroscopically varying $\bar u$, the two-scale expansion \eqref{2scale} provides a microscopic modulation adapted to the variable-coefficient operator. Indeed, the second r.~h.~s.~term of \eqref{1b} will be much smaller than the first r.~h.~s.~term because of the additional derivative on the macroscopically varying $\bar u$, again provided the extended correctors grow sublinearly.

    Typically, (\ref{1b}) is used in the following way: Given a square integrable $f$, let the square integrable $\nabla u$ and $\nabla\bar u$ solve
    \begin{align}
      \nabla\cdot(a\nabla u+f)&=0,\label{Complexe}\\
      \nabla\cdot(\abar\nabla \bar u+f)&=0,\label{Simple}
    \end{align}
    where here and in the sequel we think of these equations as being posed on the entire $\mathbb{R}^d$ if not stated otherwise. Then we have
    \begin{align}\label{12}
      -\nabla\cdot a\nabla\big(u-(1+\phi_i\partial_i)\bar u\big)=
      \nabla\cdot((\phi_ia-\sigma_i)\nabla\partial_i\bar u).
    \end{align}
    Summing up, we learn from \eqref{1a} that $a\nabla u-\abar\nabla u$ is small in the weak topology, while we learn from \eqref{12} that $\nabla u-\nabla(1+\phi_i\partial_i)\bar u$ is small in the strong topology. We will make this precise in Section \ref{SecHcv_2}.
    
      Let us close with a numerical consideration.
      Homogenization in general and the two-scale expansion \eqref{2scale} in particular are useful from a numerical point of view.
      Indeed, the problem \eqref{Simple} defining $\ubar$ is far simpler than \eqref{Complexe}, since it involves a homogeneous coefficient instead of a heterogeneous one.
      Also, the homogenized coefficient $\abar$ and the correctors $\phi_i$ are independent of $f$ and may be recovered from \eqref{Id04}.
      If $a$ is periodic, solving the latter equation is simple, since it reduces to a problem set on a periodic cell.      
      In the stochastic context, recovering $\abar$ and the correctors $\phi_i$ is more difficult, since \eqref{Id04} is posed on the entire $\R^d$ \cite{Anantharaman_2012}.
      However, it is definitely an efficient strategy when one needs to solve \eqref{Complexe} for a large number of right-hand sides $\nabla \cdot f$, as may be the case inside an optimization procedure.
      In this case, $\bar{a}$ and $\phi_i$ can be computed --at least approximately-- and stored during an ``offline'' stage while \eqref{Complexe} is solved in an ``online stage''.
    
    \begin{proof}[Argument for \eqref{1a} and \eqref{1b}]
      First note that for any function $v$ there holds
      \begin{align}\label{Id1}
	\nabla \cdot (v \nabla \cdot \sigma_i)=(\nabla \cdot \sigma_i) \nabla v=-\nabla \cdot ( \sigma_i \nabla v).
      \end{align}
      Indeed, written componentwise, \eqref{Id1} takes the form
      \begin{align*}
	\partial_j \lt(v \partial_k \sigma_{ijk} \rt)\overset{\eqref{Id01}}{=}
	\partial_k\sigma_{ijk} \partial_j v 
	=\partial_k (\sigma_{ijk} \partial_j v  ) - \sigma_{ijk} \partial_k\partial_j v 
	\overset{\eqref{Id01}}{=} 
	-\partial_k (\sigma_{ikj} \partial_j v  ).
      \end{align*}
      
      We now prove \eqref{1a} by a straightforward computation:
      \begin{align*}
	e_i \cdot ( a \nabla u - \abar \nabla u)
	=~&((a^\star-\abar^\star)e_i ) \cdot \nabla u
	\\
	\overset{\eqref{Id02prime}}{=}~&(-a^\star \nabla \phi_i^\star + \nabla \cdot \sigma_i^\star )\cdot \nabla u
	\\
	=~&-\nabla \phi_i^\star \cdot a \nabla u +(\nabla \cdot \sigma_i^\star) \nabla u
	\\
	\overset{\eqref{Id1}}{=}&\phi_i^\star \nabla \cdot a\nabla u -\nabla \cdot ( \phi_i^\star a \nabla u) -\nabla \cdot (\sigma^\star_i \nabla u ).
      \end{align*}
      
      Here comes the proof of \eqref{1b}.
      We note
      \begin{align*}
	a \nabla ( (1+\phi_i\partial_i) \ubar )
	=~&\partial_i \ubar a ( e_i + \nabla \phi_i) + \phi_i a \nabla \partial_i \ubar
	\\
	\overset{\eqref{Id02}}{=}\;&\partial_i \ubar ( \abar e_i + \nabla \cdot\sigma_i ) + \phi_i a \nabla \partial_i \ubar
	\\
	=~&\abar \nabla \ubar + \partial_i \ubar\nabla \cdot\sigma_i + \phi_i a \nabla \partial_i \ubar.
      \end{align*}
      Applying the divergence, we obtain \eqref{1b} from \eqref{Id1}.
    \end{proof}

  \subsection{$H$-convergence}\label{SecHcv_2}

  The main goal of this section is to familiarize the reader with the notions introduced in the previous section; later parts of the paper do not rely on it.
  Proposition \ref{PropHcon} below (see \cite[Prop.\ 1]{Gloria_Neukamm_Otto_2019}) relates the relative error in the two-scale expansion to the sublinearity of the extended corrector. 
  The qualitative arguments establishing Proposition 2.1 below are at the core of the notion of $H$-convergence.
  \begin{proposition}\label{PropHcon}
    Assume that the coefficient field $a:\R^d \rightarrow \R^{d\times d}$ satisfies \eqref{Ellipticite} and admits $\abar \in \R^{d \times d}$ as a constant homogenized coefficient and $(\phi,\sigma)$ and $(\phi^\star,\sigma^\star)$ as extended correctors in the sense of \eqref{Id02} and \eqref{Id02prime}.
    Let $\epsilon>0$ be given. Then, provided $\delta\ll_{d,\lambda,\epsilon} 1$, the following properties hold:
    
    (i) For any ball $\Boule \subset \R^d$ of radius $R>0$, if the extended correctors satisfy the following estimate:
    \begin{align}
      \label{CorrSub}
      \frac{1}{R} \Big(\fint_{\Boule} |(\phi,\phi^\star,\sigma,\sigma^\star)|^2\Big)^{\frac{1}{2}} \leq \delta,
    \end{align}
    then, for any $a$-harmonic function $u$ in $\Boule$ and for any $\abar$-harmonic function $\ubar$ in $\Boule$ so that $u=\ubar$ on the boundary $\partial B$, there holds:
    \begin{align}
      \label{Prop1Estim1}
      \frac{1}{R^2} \fint_{\Boule} (u-\ubar)^2 \leq \epsilon \fint_{\Boule} |\nabla u|^2.
    \end{align}
    
    (ii) Moreover, we have the following interior bound on the error of the two-scale expansion:
    \begin{align}
      \label{Prop1Estim2}
      \fint_{\frac{1}{4}\Boule} |\nabla u -  \partial_i \ubar (e_i + \nabla \phi_i) |^2 \leq \epsilon \fint_{\Boule}|\nabla u|^2.
    \end{align}
  \end{proposition}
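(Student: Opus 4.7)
The strategy is to bridge $u$ and $\ubar$ through the two-scale expansion $\tilde u := (1+\phi_i\partial_i)\ubar$ and to exploit the intertwining identity \eqref{12}: the residual $w := u-\tilde u$ solves $-\nabla\cdot a\nabla w=\nabla\cdot g$ with $g := (\phi_i a-\sigma_i)\nabla\partial_i\ubar$. Two standard facts about $\ubar$ drive the subsequent estimates. First, testing the $\abar$-harmonic equation $-\nabla\cdot\abar\nabla\ubar=0$ against $\ubar-u\in \HH^1_0(\Boule)$ and using ellipticity yields the energy comparison $\fint_\Boule|\nabla\ubar|^2\lesssim_{d,\lambda}\fint_\Boule|\nabla u|^2$. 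Second, interior Lipschitz and $C^{1,1}$ estimates for the constant-coefficient operator $-\nabla\cdot\abar\nabla$ give $\sup_{\frac{1}{2}\Boule}(|\nabla\ubar|^2+R^2|\nabla^2\ubar|^2)\lesssim_{d,\lambda}\fint_\Boule|\nabla\ubar|^2$. Combined with \eqref{CorrSub}, these two imply the essential smallness $\fint_{\frac{1}{2}\Boule}|g|^2\lesssim_{d,\lambda}\delta^2\fint_\Boule|\nabla u|^2$, which powers both claims.

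For (ii), I would apply the Caccioppoli-type energy estimate to $-\nabla\cdot a\nabla w=\nabla\cdot g$ on the concentric pair $\frac{1}{4}\Boule\subset\frac{1}{2}\Boule$, obtaining $\fint_{\frac{1}{4}\Boule}|\nabla w|^2\lesssim_{d,\lambda}R^{-2}\fint_{\frac{1}{2}\Boule}w^2+\fint_{\frac{1}{2}\Boule}|g|^2$. The source term is controlled by the estimate above, while the $\LL^2$ term is handled via the decomposition $w=(u-\ubar)-\phi_i\partial_i\ubar$, the Lipschitz bound on $\ubar$, \eqref{CorrSub}, and claim (i). Writing the target as $\nabla u-\partial_i\ubar(e_i+\nabla\phi_i)=\nabla w+\phi_i\nabla\partial_i\ubar$ and using the $C^{1,1}$ bound on $\ubar$ together with \eqref{CorrSub} to absorb the last term completes (ii).

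For (i), the obstacle is that $w$ does not vanish on $\partial\Boule$. I would introduce a cutoff $\eta\in C_c^\infty(\Boule)$ with $\eta=1$ on $\frac{1}{2}\Boule$ and $|\nabla\eta|\lesssim 1/R$, and the boundary-adapted two-scale expansion $\hat u := \ubar+\eta\phi_i\partial_i\ubar$, so that $v := u-\hat u$ belongs to $\HH^1_0(\Boule)$. A direct computation gives $-\nabla\cdot a\nabla v=\nabla\cdot a\nabla\hat u$, and $a\nabla\hat u$ naturally contains the ill-controlled factors $a\nabla\phi_i$ and $\nabla\cdot\sigma_i$. The key step is to test against $v$ and eliminate these factors by integration by parts: \eqref{Num:1} rewrites each $a\nabla\phi_i$ as $\abar e_i-ae_i+\nabla\cdot\sigma_i$, and the skew-symmetry identity \eqref{Id1}, combined with the Dirichlet condition $v|_{\partial\Boule}=0$, lets one replace every $\int f(\nabla\cdot\sigma_i)\cdot\nabla v$ by $-\int\sigma_i\nabla f\cdot\nabla v$ with no boundary contribution. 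After this rearrangement, using also $\int\nabla v\cdot\abar\nabla\ubar=0$ (since $\ubar$ is $\abar$-harmonic and $v\in\HH^1_0$), the remaining integrand is bounded by $(|\phi|+|\sigma|)(|\nabla^2\ubar|+|\nabla\ubar|\,|\nabla\eta|)|\nabla v|$ on the support of $\eta$, where the interior Lipschitz and $C^{1,1}$ bounds on $\ubar$ apply. Ellipticity, Young's inequality, and Poincar\'e's inequality for $v\in\HH^1_0(\Boule)$ then yield $R^{-2}\fint_\Boule v^2\lesssim_{d,\lambda}\delta^2\fint_\Boule|\nabla u|^2$, and the decomposition $u-\ubar=v+\eta\phi_i\partial_i\ubar$ together with the Lipschitz bound on $\ubar$ finishes (i), provided $\delta$ is small enough in terms of $d,\lambda,\epsilon$.

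The main technical difficulty is the algebraic bookkeeping for (i): the naive computation of $a\nabla\hat u$ produces uncontrolled factors $\nabla\phi_i$ and $\nabla\cdot\sigma_i$, and a consistent use of \eqref{Num:1} and \eqref{Id1} -- exploiting $v|_{\partial\Boule}=0$ to discard boundary terms -- is required to trade them for $\phi,\sigma$ acting on second derivatives of $\ubar$ or on the cutoff. A related subtlety is that $\ubar$ enjoys no regularity up to $\partial\Boule$: the cutoff $\eta$ must therefore be supported in the bulk of $\Boule$ (e.g.\ in $\frac{3}{4}\Boule$), which is precisely where the interior Lipschitz and $C^{1,1}$ estimates on $\ubar$ are available.
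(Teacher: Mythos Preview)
Your argument for (ii) is correct and coincides with the paper's. Your argument for (i), however, has a gap. Expanding $a\nabla\hat u$ via \eqref{Num:1} gives
\begin{align*}
a\nabla\hat u = (1-\eta)\,a\nabla\ubar + \eta\,\abar\nabla\ubar + \eta\,\partial_i\ubar\,\nabla\cdot\sigma_i + \phi_i\, a\nabla(\eta\partial_i\ubar);
\end{align*}
after using $\int_\Boule\nabla v\cdot\abar\nabla\ubar=0$ and the skew-symmetry trick \eqref{Id1}, what survives alongside the ``good'' terms is the boundary-layer contribution
\begin{align*}
-\int_\Boule\nabla v\cdot(1-\eta)(a-\abar)\nabla\ubar,
\end{align*}
which carries no factor of $(\phi,\sigma)$. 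Since $|a-\abar|$ is merely $O(1)$ and $\nabla\ubar$ enjoys no pointwise regularity near $\partial\Boule$, this term is only bounded by $\|\nabla v\|_{\LL^2(\Boule)}\|\nabla\ubar\|_{\LL^2(\Boule\setminus\{\eta=1\})}$, which is not small in $\delta$. Thinning the layer does not help: the interior bounds on $\nabla\ubar$ and $\nabla^2\ubar$ that you invoke degenerate as one approaches $\partial\Boule$, and the $\LL^2$-mass of $\nabla\ubar$ over a thin annulus has no uniform smallness. So the claimed bound $R^{-2}\fint_\Boule v^2\lesssim\delta^2\fint_\Boule|\nabla u|^2$ does not follow.

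The paper avoids this obstruction by an indirect, compactness-based argument in the spirit of $H$-convergence: assuming a sequence violating \eqref{Prop1Estim1}, one passes to weak limits via Rellich and uses the dual identity \eqref{1a}---applicable precisely because $u$ is $a$-harmonic---to show that $a_n\nabla u_n-\abar_n\nabla u_n\rightharpoonup 0$ in $\LL^2(\Boule)$, forcing the limit of $u_n$ to be $\abar$-harmonic and producing a contradiction. This purely qualitative route matches the $\epsilon$--$\delta$ form of \eqref{Prop1Estim1} and bypasses the boundary layer entirely.
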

  
  \begin{proof}[Proof of Proposition \ref{PropHcon}]
    The proof of \eqref{Prop1Estim1} and \eqref{Prop1Estim2} rely on the two fundamental algebraical formulas \eqref{1a} and \eqref{12}, respectively.
    The proof is divided into two steps.
    First, we make use of \eqref{1a} in order to obtain that $a\nabla u - a \nabla \ubar$ weakly vanishes when $\delta \downarrow 0$.
    Then, by a compactness argument that is reminiscent of the $H$-convergence \cite[Def.\ 1.2.15]{Allaire}, we deduce from this fact  that \eqref{Prop1Estim1} holds.
    In Step 2, thanks to \eqref{12}, we upgrade the latter estimate to \eqref{Prop1Estim2}.
    In this proof, the symbols $\lesssim$ will only depend on $d$ and $\lambda$ (but \textit{not} on $\epsilon$).
    
    \parag{Step 1: Argument for \eqref{Prop1Estim1}}
      We start with a couple of simplifications.
      W.~l.~o.~g. by scaling, we may restrict to $R=1$ and by translation invariance to $x=0$, so that $\Boule=\Boule_1(0)$.
      Also, we assume that $\fint_{\Boule} u=0$, so that, by the Poincaré-Wirtinger inequality,
      \begin{align*}
	\fint_{\Boule} u^2 \lesssim \fint_{\Boule} |\nabla u|^2.
      \end{align*}
      By the triangle inequality combined with the Poincaré inequality, once more the triangle inequality, and the energy estimate, we deduce that
      \begin{equation*}
        \begin{aligned}
          \fint_{\Boule} \ubar^2 \lesssim~& \fint_{\Boule} (\ubar-u)^2 + \fint_{\Boule} u^2
	  \lesssim  \fint_{\Boule} |\nabla (\ubar-u)|^2 + \fint_{\Boule} |\nabla u|^2
	  \\
	  \lesssim~& \fint_{\Boule} |\nabla \ubar|^2 + \fint_{\Boule} |\nabla u|^2 \lesssim \fint_{\Boule} |\nabla u|^2.
        \end{aligned}
      \end{equation*}
      As a consequence, by homogeneity, we may w.~l.~o.~g.\ suppose that
      \begin{align}\label{Assum1}
	\int_{\Boule} (  u^2 +  |\nabla u|^2 + \ubar^2 + |\nabla \ubar |^2 ) \leq 1.
      \end{align}
      Also, up to choosing $\delta$ sufficiently small, we may assume that:
      \begin{align}\label{Ellipticiteabar}
	\xi \cdot \abar \xi \geq \frac{1}{2}\lambda |\xi|^2 \quad \et \quad \xi \cdot \abar^{-1} \xi \geq \frac{1}{2}|\xi|^2.
      \end{align}
      (The easy argument of how this follows from \eqref{Ellipticite} and \eqref{CorrSub} via \eqref{Id02} is skipped since \eqref{Ellipticiteabar} automatically holds, even without the factor $\frac{1}{2}$, in stochastic homogenization \cite[(4)]{Gloria_Neukamm_Otto_2019}.)
      
      We now give an indirect argument for \eqref{Prop1Estim1}.
      We consider a sequence $a_n$ with associated homogenized matrices $\abar_n$, extended correctors $(\phi_n,\sigma_n)$ and $(\phi^\star_n,\sigma^\star_n)$, as well as $a_n$-harmonic functions $u_n$ and $\abar_n$-harmonic functions $\ubar_n$, satisfying
      \begin{align}
	\lim_{n \uparrow \infty} \fint_{\Boule} |(\phi_n,\phi^\star_n,\sigma_n,\sigma^\star_n)|^2 =0 \qquad \text{and} \qquad \liminf_{n \uparrow \infty} \fint_{\Boule} (u_n-\ubar_n)^2 >0
	\label{Contradict}
      \end{align}
      next to \eqref{Assum1}.
      We argue that such a sequence cannot exist, thus establishing \eqref{Prop1Estim1}.
      Up to a subsequence, the following convergences hold: $\abar_n \rightarrow \abar$ and, by the Rellich theorem,
      \begin{align*}
	(u_n,\ubar_n) \rightarrow (u,\ubar) \quad\et \quad\quad (\nabla u_n,\nabla \ubar_n) \rightharpoonup (\nabla u,\nabla \ubar)  \dans \LL^2(\Boule).
      \end{align*}
      We will obtain a contradiction to \eqref{Contradict} by arguing that $u=\ubar$.
      
      By assumption, $u=\ubar$ on $\partial \Boule$ (in the sense of $u-\ubar \in \HH^1_0(\Boule)$), so that it remains to argue that
      \begin{align}\label{ToEstab}
	-\nabla \cdot \abar \nabla u=0 \dans \HH^{-1}(\Boule).
      \end{align}
      Indeed, by weak convergence of $\nabla \ubar_n$ and convergence of $\abar_n$, we know that $\ubar$ is $\abar$-harmonic.
      Identity \eqref{ToEstab} will follows from
      \begin{align*}
	a_n \nabla u_n \rightharpoonup \abar \nabla u \dans \LL^2(\Boule),
      \end{align*}
      which is equivalent to
      \begin{align}\label{ToEstab2}
	a_n \nabla u_n- \abar_n \nabla u_n \rightharpoonup 0 \dans \LL^2(\Boule).
      \end{align}
      The latter is a consequence of \eqref{1a}.
      Indeed, for any $v \in \CC_0^\infty(\Boule)$ and $j\in \{1,\cdots,d\}$,
      \begin{align*}
	\int_{\Boule} v e_j \cdot ( a_n \nabla u_n - \abar_n \nabla u_n) \overset{\eqref{1a}}{=}\int_{\Boule} \nabla v \cdot (\phi^{\star}_{n,j} a_n + \sigma^{\star}_{n,j} ) \nabla u_n \rightarrow 0
      \end{align*}
      since $\nabla v \in \LL^\infty(\Boule)$, $\phi^{\star}_{n,j} a + \sigma^{\star}_{n,j} \rightarrow 0$ in $\LL^2(\Boule)$ and $\nabla u_n$ is bounded in $\LL^2(\Boule)$.
      This establishes \eqref{ToEstab2} and therefore \eqref{ToEstab}.
      
    \parag{Step 2: Argument for \eqref{Prop1Estim2}}
      The proof of \eqref{Prop1Estim2} relies on the previous step and on two ingredients from elliptic regularity.
      The first one is the well-known Caccioppoli estimate: If $w$ solves $\nabla \cdot ( a \nabla w + f )=0$ in $\frac{1}{2}\Boule$, then
      \begin{align}\label{Caccioppoli}
	\fint_{\frac{1}{4}\Boule} |\nabla w |^2 \lesssim \fint_{\frac{1}{2}\Boule} (w^2 + |f|^2 ).
      \end{align}
      (Recall that $\Boule=\Boule_1(0)$.)
      We reproduce in Appendix \ref{ProofCaccioppoli} a proof of this estimate in a generalized context.
      The second ingredient is an interior regularity result for elliptic equations with constant coefficients. More precisely, if $\ubar$ is $\abar$-harmonic in $\Boule$, then there holds
      \begin{align}\label{Regubar}
	\sup_{\frac{1}{2} \Boule} ( |\nabla^2 \ubar |^2 + |\nabla \ubar|^2 ) \lesssim \fint_{\Boule} |\nabla \ubar |^2.
      \end{align}
      (It is an easy consequence of the Sobolev embedding and an iterated application of the Caccioppoli estimate to derivatives of the constant-coefficient elliptic equation.)
      
      Equipped with \eqref{Caccioppoli} and \eqref{Regubar}, we proceed with the proof of \eqref{Prop1Estim2}.
      Appealing to \eqref{12}, we apply \eqref{Caccioppoli} to $w:=u-(1+\phi_i\partial_i)\ubar$ and $f:=(\phi_i a-\sigma_i)\nabla \partial_i \ubar$:
      \begin{align*}
	\fint_{\frac{1}{4}\Boule} | \nabla u - \partial_i \ubar (e_i+\nabla \phi_i)|^2 
	&~\lesssim\fint_{\frac{1}{4}\Boule} | \nabla w |^2  
	+\fint_{\frac{1}{4}\Boule} | \phi_i \nabla \partial_i \ubar|^2
	\\
	&\overset{\eqref{Caccioppoli}}{\lesssim}
	\fint_{\frac{1}{2}\Boule} w^2 + \fint_{\frac{1}{2}\Boule}
	|(\phi,\sigma)|^2 |\nabla^2  \ubar |^2
	\\
	&~\lesssim
	\fint_{\frac{1}{2}\Boule} ( u -\ubar )^2 + \fint_{\frac{1}{2}\Boule} |(\phi,\sigma)|^2 ( |\nabla^2 \ubar |^2 + |\nabla \ubar|^2 ).
      \end{align*}
      Making use of \eqref{Regubar}, this yields
      \begin{align*}
	&\fint_{\frac{1}{4}\Boule} | \nabla u + \partial_i \ubar (e_i+\nabla \phi_i)|^2 
	\lesssim \fint_{\frac{1}{2}\Boule} ( u -\ubar )^2 + \fint_{\Boule} |(\phi,\sigma)|^2 \fint_{\Boule}|\nabla u|^2.
      \end{align*}
      Appealing to \eqref{Prop1Estim1}, this finally shows \eqref{Prop1Estim2} for $\delta \ll_{d,\lambda,\epsilon} 1$.
  \end{proof}
  
\section{Our framework for stochastic homogenization}\label{SecCorrEstim}

  \subsection{General assumptions}\label{SecAssumpGen}
    In stochastic homogenization, one is given a probability measure on the space of coefficient fields $a$ that are $\lambda$-uniformly elliptic in the sense of \eqref{Ellipticite} (with the topology induced by the $H$-convergence, which makes it into a compact space). Using physics jargon, we call this probability measure an \textit{ensemble} and denote the expectation by $\langl\cdot\rangl$.
    This ensemble is assumed to be:
    \begin{itemize}
      \item{stationary, that is, for all shift vectors $z\in \R^d$, $a(\cdot+z)$ and $a(\cdot)$ have the same law under $\langl \cdot \rangl$;}
      \item{ergodic (see \cite[Chap.\ 7 pp.\ 222--225]{JKO}). Since this assumption will be strengthened below in Section \ref{SecAssumpGauss}, we do not state it precisely. Let us just mention that this property is required to use the Birkhoff ergodic theorem \cite[Th.\ 7.2, p.\ 225]{JKO}, which relates the expectation to the spatial average.}
    \end{itemize}
    
    Under these assumptions, by \cite[Lem.\ 1]{Gloria_Neukamm_Otto_2019}, there exists a random scalar field $\phi_i$ (the corrector), a random tensor field $\sigma_i$ (the flux corrector) for $i \in \{1,\cdots,d\}$, and a deterministic coefficient $\abar$ (the homogenized coefficient) that are related to $a$ in the sense of \eqref{Id02}, \eqref{Id01}, and \eqref{Id03}, respectively.
    Moreover, the following properties hold:
    \begin{itemize}
      \item{$\nabla \phi_i$ and $\nabla \sigma_i$ are stationary. We say that a random field $f(a,x)$ is stationary if it is shift-covariant in the sense of
      \begin{align*}
	&&f(a(\cdot + z),x)=f(a(\cdot),x+z) \qquad \pourtout x, z \in \R^d \text{ and for } \langl\cdot\rangle\text{-a.e. } a.
      \end{align*}
      It implies that $f(a,\cdot)$ and $f(a,\cdot+z)$ have the same law under $\langl \cdot \rangl$.
      }
      \item{$\nabla \phi_i$ and $\nabla \sigma_i$ have a finite second moment and vanishing expectation, that is,
      \begin{align}\label{NablaCorrBdd}
        &\langl |\nabla \phi_i(x)|^2 \rangl +\langl |\nabla \sigma_i(x)|^2 \rangl  \lesssim 1,
	\\
	\label{Vanish_expectation}
        &\langl \nabla \phi_i(x)\rangl =0 \et \langl \nabla \sigma_i(x) \rangl=0.
      \end{align}
      (By stationarity, this estimate and these identities are independent of $x \in \R^d$).
      }
      \item{The coefficient $\abar$ can be expressed as $\abar e_i = \langl q_i \rangl$ for $q_i$ defined by \eqref{Defqi}, and it satisfies \eqref{Ellipticite}.
      }
    \end{itemize}
    
    These results are only qualitative. In order to obtain a quantitative theory of stochastic homogenization, we additionally assume that the ensemble $\langl \cdot \rangl$ satisfies a spectral gap \cite{Gloria_Otto}.
    This condition amounts to a quantification of ergodicity.
    It reads: For any random variable $F$, which is a (measurable) functional of $a$, there holds
    \begin{align}\label{SG}
      \langl (F-\langl F \rangl )^2 \rangl \leq \Big\langl \int  \big| \frac{\partial F}{\partial a}\big|^2 \Big\rangl,
    \end{align}
    where the random tensor field $\frac{\partial F}{\partial a}$ (depending on $(a,x)$) is the functional (or vertical, or Malliavin) derivative of $F$ with respect to $a$ defined by
    \begin{equation}\label{Def:Malliavin}
      \lim_{\epsilon \rightarrow 0} \frac{F(a+\epsilon \delta a) - F(a)}{\epsilon} = \int  \frac{\partial F(a)}{\partial a_{ij}(x)}  (\delta a)_{ij}(x) \dd x.
    \end{equation}
    The spectral gap replaces the Poincaré inequality used in the periodic case.
    It leads to a sensitivity analysis through the study of the functional derivative.
    
    In the next section, we consider a more specific class of ensembles $\langl \cdot \rangl$ satisfying the above assumptions.

  \subsection{A class of Gaussian ensembles}\label{SecAssumpGauss}
    In this section, we introduce the class of ensembles we will be working with.
    Let $\langl\cdot\rangl$ denote an ensemble of stationary and centered Gaussian fields $g$
    (with values in some finite dimensional vector space, but we adopt scalar language and notation). It is thus characterized by its covariance function $c$, the (non-negative) Fourier transform $\FF c$ of which is assumed to satisfy
    \begin{align}\label{wg01}
    \FF c(k)\;\le (1+|k|)^{-d-2\alpha}\pourtout k\in\R^d,
    \end{align}
    for some fixed exponent $\alpha \in (0,1)$.
    Loosely speaking, \eqref{wg01} encodes that the covariance function is integrable at infinity (in a somewhat weakened way) and $2\alpha$-H\"older continuous at the origin.
    These assumptions contain the popular Mat\'ern kernel \cite[p.\ 31 \& (32), p.\ 49]{Stein_1999}, the Fourier transform of which reads $\FF c(k):= C (1+|k|^2)^{-\nu-d/2}$ for $C>0$ and $\nu>0$.
    
    In this framework, we set $a:=A(g)$, where $A$ is a Lipschitz map from $\R$ to the set of $d\times d$ tensors satisfying \eqref{Ellipticite} (see Figure \ref{Fig_Coeff} for an example).
    For conciseness, we henceforth suppress the dependence in the estimates on the $4$-tuple of fixed constants
    \begin{equation}\label{Def_Gamma}
      \gamma:=\big(d,\lambda,\alpha,\|A'\|_{\LL^\infty}\big).
    \end{equation}
    
    \begin{figure}[h]
      \includegraphics[width=\textwidth]{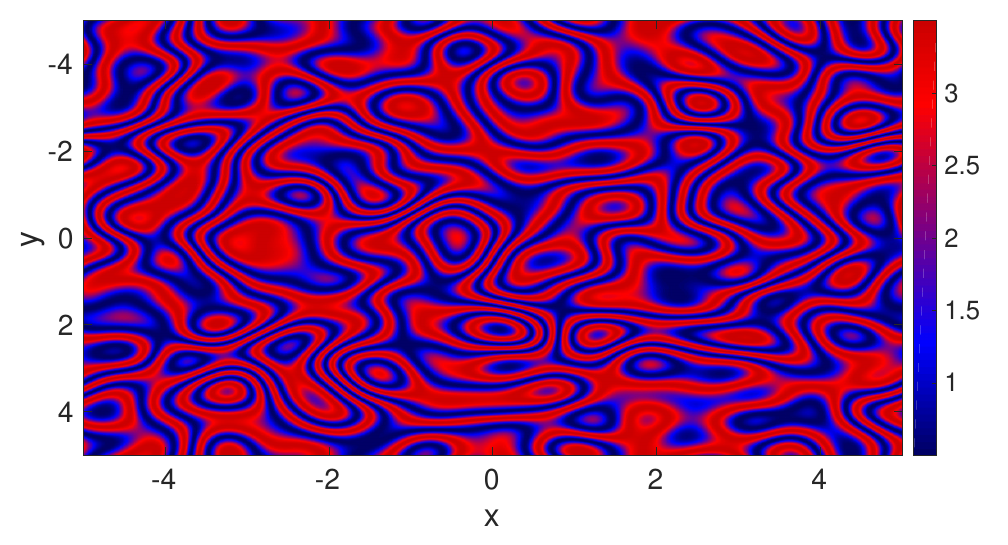}
      \caption{A realization of a random  (scalar) coefficient field $a$ generated by a Gaussian field.}
      \label{Fig_Coeff}
    \end{figure}

    Assumption (\ref{wg01}) on the ensemble $\langl\cdot\rangl$ has two beneficial consequences: On the one hand, we have a spectral gap estimate with a carr\'e-du-champs that behaves like the $\LL^1$-norm on scales up to order one and like the $\LL^2$-norm on scales larger than order $1$ (and thus is smaller than the $\LL^2$-norm, thereby strengthening the spectral gap property w.~r.~t.\ \eqref{SG}).
    On the other hand, the realizations of $g$ and thus $a$ are H\"older continuous.
    More precisely, we show the following result (the proof of \eqref{wg03} is relegated to Appendix \ref{SecProofLemloc}):
    
    \begin{lemma}\label{LemSG}
      Let $g$ and $a$ satisfy the above assumptions.
      Then, for any functional $F=F(a)$ and $r \in [1,\infty)$, there holds
      \begin{align}
	\label{Prop21a}
	\langl (F-\langl F \rangl)^{2r}\rangl^{\frac{1}{r}} 
	\lesssim_{\gamma, r} 
	\bigg\langl \Big( \int \Big( \fint_{\Boule_1(x)} \big| \frac{\partial F}{\partial a(y)}\big| \dd y \Big)^2 \dd x \Big)^{r} \bigg\rangl^{\frac{1}{r}},
      \end{align}
      and, for any $0<\alpha'<\alpha$ and $r \in [1,\infty)$, there holds
      \begin{align}\label{wg03}
	\big\langl\| a \|_{\CC^{\alpha'}(\Boule_1)}^r\big\rangl^\frac{1}{r}
	\lesssim_{\gamma, r, \alpha'} 1.
      \end{align}
    \end{lemma}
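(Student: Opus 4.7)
The plan is to treat the two estimates \eqref{Prop21a} and \eqref{wg03} separately; the bulk of the work goes into \eqref{Prop21a}, while \eqref{wg03} is deferred to the appendix as indicated.

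For \eqref{Prop21a} I would proceed in three steps. First, since $F$ depends on $g$ only through $a=A(g)$, the chain rule gives $\partial F/\partial g(y)=A'(g(y))\,\partial F/\partial a(y)$, so that pointwise $|\partial F/\partial g(y)|\leq\|A'\|_{L^\infty}|\partial F/\partial a(y)|$. This reduces the inequality to an analogous statement for $F$ viewed as a functional of the Gaussian $g$, with $\partial F/\partial g$ in place of $\partial F/\partial a$. Second, I would apply the $L^{2r}$-version of the Gaussian Poincar\'e inequality for the centered Gaussian field $g$ with covariance $c$, which is a standard consequence of the Gaussian logarithmic Sobolev inequality (via Herbst's argument, or equivalently via Nelson's hypercontractivity for the Ornstein-Uhlenbeck semigroup):
\[
\langl(F-\langl F\rangl)^{2r}\rangl^{1/r}\lesssim r\,\Big\langl\Big(\!\int\!\!\int c(x-y)\,\tfrac{\partial F}{\partial g}(x)\,\tfrac{\partial F}{\partial g}(y)\,dx\,dy\Big)^{r}\Big\rangl^{1/r}.
\]
Third, and most substantially, I would establish the deterministic kernel estimate
\[
\int\!\!\int c(x-y)\,h(x)\,h(y)\,dx\,dy \;\lesssim_\gamma\; \int\Big(\fint_{\Boule_1(x)}|h|\Big)^2 dx
\]
for any test field $h$. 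The key input is the Fourier decay $\hat c(k)\leq(1+|k|)^{-d-2\alpha}$, which by splitting the Fourier integral at $|k|\sim|x|^{-1}$ yields the pointwise control $|c(x)|\lesssim 1$ on $\Boule_1$ and $|c(x)|\lesssim(1+|x|)^{-d-1}$ on $\Boule_1^c$ (the polynomial decay coming from the H\"older regularity of $\hat c$ at the origin, encoded by $\alpha>0$). This suggests a dyadic decomposition $|c(x)|\lesssim\sum_{k\geq 0}2^{-k}|\Boule_{2^k}|^{-1}\mathds{1}_{\Boule_{2^k}}(x)$; inserting it into the double integral and controlling each scale by a Cauchy-Schwarz-type bound against the scale-$1$ local average (using the $L^2$-boundedness of the Hardy-Littlewood maximal function and the nonnegativity of $|h|$ to compare averages at different scales) yields the claimed inequality.

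For \eqref{wg03}, the Lipschitz property of $A$ gives $[a]_{\CC^{\alpha'}(\Boule_1)}\leq\|A'\|_{L^\infty}\,[g]_{\CC^{\alpha'}(\Boule_1)}$ together with $\|a\|_{L^\infty(\Boule_1)}\leq|A(0)|+\|A'\|_{L^\infty}\|g\|_{L^\infty(\Boule_1)}$, so the estimate reduces to a H\"older bound for the Gaussian field $g$. The second-moment increment $\langl|g(x)-g(y)|^2\rangl=2(c(0)-c(x-y))$ is bounded by $|x-y|^{2\alpha}$ for $|x-y|\leq 1$ by the same Fourier-splitting argument as in the kernel estimate; Kolmogorov's continuity criterion then delivers almost-sure $C^{\alpha'}$ realizations for any $\alpha'<\alpha$, and the Gaussian equivalence of moments (hypercontractivity) upgrades the second-moment increment bound to any polynomial moment $r$, yielding \eqref{wg03}. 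These details are carried out in Appendix~\ref{SecProofLemloc}.

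The main obstacle lies in the kernel estimate in the third step of \eqref{Prop21a}. A naive Cauchy-Schwarz bound on the quadratic form in $c$ only produces $\|h\|_{L^2}^2$ on the right-hand side, which is \emph{strictly weaker} than $\|\fint_{\Boule_1(\cdot)}|h|\|_{L^2}^2$ (indeed, by Jensen the latter is bounded by the former). Obtaining the sharper local-$L^1$ right-hand side is precisely what is advertised before the lemma as the \emph{strengthening} of \eqref{SG}, and it is here that the precise exponent $-d-2\alpha$ in the Fourier decay---with $\alpha>0$---enters crucially: it provides simultaneously the short-range concentration of $c$ at unit scale (from integrability of $\hat c$) and the polynomial decay at infinity (from H\"older regularity of $\hat c$ at the origin), which together feed the dyadic decomposition and make the scale-by-scale recombination summable.
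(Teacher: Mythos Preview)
Your overall strategy for \eqref{Prop21a} is sound, and your treatment of \eqref{wg03} is essentially the same as the paper's (the paper uses a Besov embedding rather than Kolmogorov, but these are interchangeable here). There is, however, a genuine gap in your third step, the kernel estimate.

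\textbf{The gap.} You claim the pointwise decay $|c(x)|\lesssim(1+|x|)^{-d-1}$ for $|x|\ge 1$, attributing it to ``H\"older regularity of $\hat c$ at the origin, encoded by $\alpha>0$''. This is not what assumption \eqref{wg01} says: the bound $\hat c(k)\le(1+|k|)^{-d-2\alpha}$ is a \emph{pointwise upper bound} on $\hat c$, not a regularity statement. Decay of $c$ at infinity would require smoothness of $\hat c$ near $k=0$, which is nowhere assumed. (The paper even flags this: \eqref{wg01} encodes integrability of $c$ at infinity only ``in a somewhat weakened way''.) So your Fourier-splitting argument, which would need to integrate by parts in $k$, cannot be carried out for $c$ itself.

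\textbf{How to repair it, and how the paper proceeds.} The fix is one line: since $0\le\hat c(k)\le\hat K(k):=(1+|k|)^{-d-2\alpha}$, one has on the Fourier side $\int\!\!\int c(x-y)h(x)h(y)=\int\hat c\,|\hat h|^2\le\int\hat K\,|\hat h|^2$. The Bessel-type kernel $K$ \emph{does} enjoy the pointwise decay you want (it is nonnegative and integrable), and your dyadic decomposition then goes through for $K$ in place of $c$, after also using $K\ge 0$ to pass from $h$ to $|h|$. The paper instead stays entirely on the Fourier side: after the same majorization by $\hat K$, it observes that the desired bound $\int\hat K|\hat h|^2\lesssim\int(\fint_{\Boule_1(x)}|h|)^2dx$ is, by duality between the local $L^1$ and the local $L^\infty$, equivalent to the Sobolev-type inequality $\int\sup_{\Boule_1(x)}\zeta^2\,dx\lesssim\int(1+|k|)^{d+2\alpha}|\hat\zeta|^2dk$, which it proves by a Littlewood--Paley decomposition.

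\textbf{A secondary difference.} Your passage from $L^2$ to $L^{2r}$ invokes the $L^{2r}$ Gaussian Poincar\'e inequality directly (via log-Sobolev / hypercontractivity). The paper takes the more elementary route of first establishing the $L^2$ spectral gap \eqref{wg02} and then bootstrapping by applying it to $|F|^r$ together with H\"older and Young. Both are valid; your route is shorter but imports a stronger functional inequality, while the paper's is self-contained from the $L^2$ statement.
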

    
    \textit{En passant}, notice that estimate \eqref{wg03} allows to appeal to classical Schauder and CZ estimates for scales up to order $1$ since the dependence of these estimates on $\|a\|_{\CC^{\alpha'}(\R^d)}$ scales polynomially, see Lemma \ref{LemAppendHold} with $T=\infty$.
    In particular, if $u$ is $a$-harmonic in $\Boule_1$, then
    \begin{equation}\label{Lipschitz}
      \langl | \nabla u(0) |^{r'}\rangl^\frac{1}{r'}
      \lesssim_{\gamma, r',r} \langl \| \nabla u \|_{\LL^2(\Boule_1)}^{r} \rangl^{\frac{1}{r}} \pourtout 1\leq r'<r< \infty.
    \end{equation}
    
    \begin{proof}[Proof of \eqref{Prop21a}]
      W.~l.~o.~g.\ we assume that $\langl F \rangl=0$.
      We derive \eqref{Prop21a} by a two-step argument.
      First, we show that we have the following strengthening of \eqref{SG}:
      \begin{align}\label{wg02}
	\langl(F-\langl F\rangl)^2\rangl\lesssim
	\Big\langl \int 
	\Big(\fint_{\Boule_1(x)}\big|\frac{\partial F}{\partial g}\big|\Big)^2\dd x\Big\rangl,
      \end{align}
      from which we deduce \eqref{Prop21a} in a second step.
	
	%\medskip
      \parag{Step 1: Argument for \eqref{wg02}}
	On the Fourier side, the Helffer-Sj\"ostrand representation  \cite[(2.2)]{Helffer_1998} implies\footnote{Here, the Fourier transform is normalized in the following way: $\FF u (k):= \int  \ee^{-\ii k x} u(x) \dd x$.\label{fref}
	}
	\begin{align*}
	  \langl F^2\rangl 
	  \leq~& \Big\langl \int \int  \frac{\partial F}{\partial g(x)} c(x-y) \frac{\partial F}{\partial g(y)} \dd y \dd x \Big\rangl
	  = \Big\langl\frac{1}{(2\pi)^{d}}\int  \FF c\big|\FF \frac{\partial F}{\partial g}\big|^2 \dd k\Big\rangl.
	\end{align*}
	\newcounter{compteur}
	\setcounter{compteur}{\thepage}
	Therefore, in view of (\ref{wg01}), it is enough to estimate the carr\'e-du-champs 
	for an arbitrary function $\zeta=\zeta(x)$ as follows:
	\begin{align*}
	\int (1+|k|)^{-d-2\alpha}|\FF \zeta|^2\dd k
	\lesssim 
	\int \Big(\fint_{\Boule_1(x)}|\zeta|\Big)^2\dd x,
	\end{align*}
	which by duality is equivalent to
	\begin{align}\label{wg04}
	\int \sup_{\Boule_1(x)} \zeta^2\dd x
	\lesssim
	\int (1+|k|)^{d+2\alpha}|\FF \zeta|^2\dd k.
	\end{align}
	Since the Sobolev inequality \eqref{wg04} is unrelated to homogenization, we relegate its proof to Appendix \ref{ProofE}.
	
	%\medskip
      \parag{Step 2: Conclusion}
	We now improve \eqref{wg02} to \eqref{Prop21a}.
	By the chain rule,
	\begin{align*}
	  \frac{\partial}{\partial g(y)} = A'(g(y)) \frac{\partial }{\partial a(y)},
	\end{align*}
	and since $A'$ is bounded (recall  the definition \eqref{Def_Gamma} of $\gamma$), \eqref{wg02} yields
	\begin{align}\label{SGImproved}
	  \langl (F-\langl F\rangl)^2\rangl\lesssim
	  \Big\langl \int \Big(\fint_{\Boule_1(x)}\big|\frac{\partial F}{\partial a}\big|\Big)^2\dd x\Big\rangl.
	\end{align}
    
	We apply \eqref{SGImproved} to $F$ replaced by $|F|^r$ for $r \geq 1$:
	\begin{align}
	  \label{1a0}
	  \langl F^{2r} \rangl \lesssim \langl |F|^r \rangl^2 
	  + \Big\langl \int  \Big( \fint_{\Boule_1(x)} \big| \frac{\partial}{\partial a} |F|^r \big| \Big)^2 \Big\rangl.
	\end{align}
	By the chain rule
	\begin{align*}
	  \frac{\partial}{\partial a} |F|^r = r |F|^{r-2} F \frac{\partial F}{\partial a},
	\end{align*}
	and using the H\"older inequality in probability, we obtain
	\begin{equation}
	  \begin{aligned}
	    \Big\langl \int  \Big( \fint_{\Boule_1(x)} | \frac{\partial}{\partial a} |F|^r | \Big)^2 \dd x \Big\rangl
	    &= r^2
	    \Big\langl F^{2(r-1)}\int  \Big( \fint_{\Boule_1(x)}\big| \frac{\partial F}{\partial a}\big|\Big)^2 \dd x  \Big\rangl
	    \\
	    &\leq r^2
	    \langl F^{2r} \rangl^{\frac{r-1}{r}} \Big\langl \Big(\int  \Big( \fint_{\Boule_1(x)}\big| \frac{\partial F}{\partial a}\big|\Big)^2 \dd x \Big)^r  \Big\rangl^{\frac{1}{r}}.
	  \end{aligned}
	  \label{1a1}
	\end{equation}
	Moreover, the H\"older inequality followed by the spectral gap \eqref{SGImproved} yields
	\begin{align*}
	  \langl |F|^r \rangl^2
	  \leq~&
	  \langl F^{2r} \rangl^{\frac{r-2}{r-1}} \langl F^2 \rangl^{\frac{r}{r-1}} 
	  \lesssim
	  \langl F^{2r} \rangl^{\frac{r-2}{r-1}} \Big\langl \int   \Big( \fint_{\Boule_1(x)}\big| \frac{\partial F}{\partial a}\big|\Big)^2 \dd x \Big\rangl^{\frac{r}{r-1}}.
	\end{align*}
	Applying Jensen's inequality to the last factor, we deduce
	\begin{align}
	  \langl |F|^r \rangl^2 
	  \lesssim 
	  \langl F^{2r} \rangl^{\frac{r-2}{r-1}} 
	  \Big\langl \Big(\int  \Big( \fint_{\Boule_1(x)}\big| \frac{\partial F}{\partial a}\big|\Big)^2 \dd x \Big)^r  \Big\rangl^{\frac{1}{r-1}}.
	  \label{1a2}
	\end{align}
	Now, \eqref{Prop21a} follows from inserting \eqref{1a1} and \eqref{1a2} into \eqref{1a0} and appealing to Young's inequality.
    \end{proof}
    
  \section{Stochastic corrector estimates}\label{SecCorrEstim2}
  
    Estimates on the extended corrector are at the core of quantitative stochastic homogenization. 
    The following proposition collects all the estimates needed in this paper:
    \begin{proposition}\label{Propcorr}
      Under the assumptions of Section \ref{SecAssumpGauss}, for any $r \in [1, \infty)$, there holds:
      \begin{align}
        \label{i}
        \langl | \nabla (\phi,\sigma)|^{2r} \rangl^{\frac{1}{r}} \lesssim_{\gamma, r} 1.
      \end{align}
      Moreover, the spatial averages of the gradient of the extended correctors display cancellations:
      \begin{align}\label{ii}
        \Big\langl \Big| \int  g \cdot  \nabla  (\phi,\sigma) \Big|^{2r} \Big\rangl^{\frac{1}{r}} \lesssim_{\gamma, r} \int  |g|^2 \quad \text{ for all deterministic vector fields }g.
      \end{align}
      Last, the increments of the extended correctors are controlled as follows:
      \begin{align}
        \label{iii}
        \langl |(\phi,\sigma)(x) - (\phi,\sigma)(0)|^{2r} \rangl^{\frac{1}{r}} \lesssim_{\gamma, r} \mu_d^2(|x|) \pourtout x \in \R^d,
      \end{align}
      where the function $\mu_d$ is defined by
      \begin{align}\label{Defmu}
        \mu_d(r):=
	\begin{cases}
          \sqrt{r+1} &\si d=1,\\
          \ln^{\frac{1}{2}}(r+2) &\si d=2,\\
          1 & \si d>2.
        \end{cases}
      \end{align}
    \end{proposition}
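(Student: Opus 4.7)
The plan is to take (ii) as the central estimate, with (i) obtained simultaneously via a buckling with the Lipschitz estimate \eqref{Lipschitz}, and (iii) following from (ii) by a judicious choice of test function. For $\sigma$, the Malliavin computation is analogous to that for $\phi$ but uses a dual problem driven by the plain Laplacian (rather than $-\nabla\cdot a^\star\nabla$), via the gauge relation \eqref{Id03}; I therefore focus on the $\phi$-estimates in what follows.

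For (ii) applied to $F := \int g\cdot\nabla\phi_i$, I introduce the dual $v$ solving $-\nabla\cdot a^\star\nabla v = \nabla\cdot g$. Pairing with a perturbation of the corrector equation gives the explicit Malliavin representation $\partial F/\partial a_{jk}(y) = (\nabla v)_j(y)(e_i+\nabla\phi_i)_k(y)$, so that $|\partial_a F| \leq |\nabla v|(1+|\nabla\phi|)$. Plugging this into the $\LL^{2r}_\omega$-Spectral Gap \eqref{Prop21a} and Cauchy-Schwarzing on each $\Boule_1(x)$ yields
\begin{equation*}
\langl F^{2r}\rangl^{1/r} \lesssim \Big\langl \Big(\int \mathcal{N}(x)^2 \mathcal{K}(x)^2\,\dd x\Big)^r\Big\rangl^{1/r}
\end{equation*}
with $\mathcal{N}(x):=(\fint_{\Boule_1(x)}|\nabla v|^2)^{1/2}$ and $\mathcal{K}(x):=(\fint_{\Boule_1(x)}(1+|\nabla\phi|)^2)^{1/2}$. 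Minkowski's integral inequality (exchanging $\LL^r_\omega$ and $\LL^1_x$) combined with H\"older in probability with exponents $(p,p^\star)$ separates the two factors: by stationarity the $\mathcal{K}$-contribution reduces to the pointwise moment $\langl(1+|\nabla\phi(0)|)^{2rp^\star}\rangl^{1/(rp^\star)}$, \emph{i.e.} to estimate (i); the $\mathcal{N}$-contribution aggregates, via Jensen and Fubini, into $\|\nabla v\|_{\LL^2_x(\LL^{2rp}_\omega)}^2$, which is bounded by $\|g\|_{\LL^2}^2$ via the perturbative part of Proposition \ref{PropCZ} (Meyers' argument, with the spatial exponent kept at $2$).

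For (i), \eqref{Lipschitz} applied to the $a$-harmonic function $x_i+\phi_i$ reduces $\langl|\nabla\phi(0)|^q\rangl$ to the $\LL^{q'}_\omega$-moment of the local norm $\|\nabla\phi\|_{\LL^2(\Boule_1)}$ for some $q' > q$. The latter is controlled by applying the Malliavin-SG scheme to the nonlinear functional $\|\nabla\phi\|^2_{\LL^2(\Boule_1)}$, through a dual problem whose forcing is the \emph{random} field $\un_{\Boule_1}\nabla\phi$; the resulting contributions are handled by an elementary but subtle regularity property of $a$-harmonic functions applied to $x_i+\phi_i$, reminiscent of \cite[Lem.\ 4]{Bella_Giunti_Otto_2017}. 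The buckling closes since the slack $q < q'$ in \eqref{Lipschitz} compensates the small integrability loss incurred in the scheme. For (iii), I pick $g = -\nabla\psi$ with $\psi$ the unit-scale mollified solution of $-\Delta\psi = \delta_0 - \delta_x$: then $\int g\cdot\nabla\phi$ equals $\phi(x) - \phi(0)$ up to a short-scale Lipschitz correction controlled by \eqref{wg03}, while $\|g\|_{\LL^2}^2 = \psi(0) - \psi(x) \sim \mu_d^2(|x|)$, the dimension dependence reflecting the Green-function growth of $-\Delta$.

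The hard part will be the simultaneous buckling between (i) and (ii): the Malliavin-SG argument for (ii) produces (i) on the right-hand side, and the Lipschitz step for (i) requires applying the same scheme to a quadratic functional. Choosing the H\"older exponents $(p,p^\star)$ so that the stochastic-integrability loss is absorbed by the Lipschitz slack --- while all Jensen, Minkowski, and Fubini steps remain compatible --- is the main technical balancing act. Once the exponent arithmetic is worked out, the rest is standard $\LL^p$-manipulation closed by Young's inequality.
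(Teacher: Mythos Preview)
Your overall architecture is right---Malliavin representation, $\LL^r$-spectral gap, perturbative CZ on the dual $v$, buckling between (i) and (ii)---and your treatment of (iii) is essentially the paper's. But the processing of the carr\'e-du-champs in (ii) has a genuine exponent gap. After Minkowski and H\"older with $(p,p^\star)$, the $\mathcal N$-contribution lands at stochastic exponent $2rp$, and Proposition~\ref{PropCZ}(i) requires this to be near $2$, i.e.\ $rp$ near $1$---impossible for $r>1$ and $p>1$. So your scheme reaches only $r$ close to $1$, and there is no evident bootstrap to larger~$r$. The paper's fix is to apply \emph{duality in probability before} H\"older: one writes $\langle(\int\mathcal N^2\mathcal K^2)^r\rangle^{1/r}=\sup_{\langle F^{2r^\star}\rangle\le 1}\int\langle(F\mathcal N)^2\mathcal K^2\rangle$ and then splits with $(r^\star,r)$, grouping the dual variable $F$ with $\nabla v$. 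Since the equation for $v$ is linear and $F$ is independent of~$x$, $Fv$ solves $\nabla\cdot(a^\star\nabla(Fv)+Fg)=0$, so CZ runs at exponent $2r^\star$---close to $2$ precisely when $r$ is \emph{large}. Minkowski discards this grouping, which is the whole point.

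The buckling is also organized differently. The paper never applies the spectral gap to the quadratic $\|\nabla\phi\|^2_{\LL^2(\Boule_1)}$ (whose random forcing, as you note, would further muddy the CZ step). It buckles instead on the single scalar $\Lambda_r:=\langle(\fint_{\Boule_1}|\nabla u|^2)^r\rangle^{1/r}$ with $u=x_i+\phi_i$: the duality--SG--CZ argument above yields \eqref{Num:809}, namely $\langle|\int g\cdot\nabla\phi|^{2r}\rangle^{1/r}\lesssim\Lambda_r\int|g|^2$ with \emph{no loss in $r$}, while a Caccioppoli-plus-stationarity step---this is the BGO-type ingredient you allude to---gives $\Lambda_r\lesssim R^{d(1-1/r)}\langle|\fint_{\Boule_R}\nabla u|^{2r}\rangle^{1/r}$, cf.~\eqref{Claim01}. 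Taking $g=|\Boule_R|^{-1}\mathds{1}_{\Boule_R}e_j$ in the former and $R\gg 1$ closes the loop on $\Lambda_r$; the local Lipschitz estimate~\eqref{Lipschitz} is used only \emph{afterwards} to upgrade $\Lambda_r\lesssim 1$ to the pointwise~(i), not as part of the buckling itself.
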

    Estimate \eqref{ii} is identical to what one would obtain if $\nabla(\phi,\sigma)$ were replaced by white noise. It is thus not surprising that the large-scale behavior of increments of the extended corrector, \textit{cf.}~\eqref{iii}, is that of the Gaussian free field (on small scales, $(\phi,\sigma)$ is more regular).

\begin{proof}[Proof of Proposition \ref{Propcorr}]
      For abbreviation, we denote
      \begin{align}\label{Def_chiu}
        \chiu_i(x):=x_i + \phi_i(x),
      \end{align}
      and henceforth omit the index $i$.
      
      \parag{Strategy of the proof}
	The proof is divided into two main parts. Part 1 deals with the corrector $\phi$ and Part 2 deals with the flux corrector $\sigma$.	
	
	We describe the first part:
	In Step 1, we derive a representation formula for the Malliavin derivative $\frac{\partial}{\partial a} \int g \cdot \nabla \phi$.
	In Step 2, the $\LL^r$ version \eqref{Prop21a} of the spectral gap is used to estimate the $r$-th moments of the spatial averages of $\nabla \phi$ as
	\begin{align}
	  \label{Num:809}
	  \Big\langl \Big| \int g \cdot \nabla \phi \Big|^{2r} \Big\rangl^{\frac{1}{r}}
	  \lesssim
	  \Big\langl \Big( \fint_{\Boule_1} |\nabla \chiu|^2 \Big)^{r} \Big\rangl^{\frac{1}{r}} \int |g|^2 \pour 1 \ll r < \infty.
	\end{align}
	There, the representation formula for Malliavin derivative derived in Step 1 plays a central role, as well as an annealed CZ estimate of Proposition \ref{PropCZ}(i).
	Step 3 involves a PDE ingredient, namely the Caccioppoli estimate, to control the $r$-th moments of $\fint_{\Boule_1} |\nabla \chiu|^2$ by spatial averages of $\nabla \chiu$
	\begin{align}
	  \label{Claim01}
	  \Big\langl \Big(\fint_{\Boule_1} |\nabla \chiu|^2 \Big)^r \Big\rangl^{\frac{1}{r}} 
	  \lesssim
	  R^{d(1-\frac{1}{r})}\Big\langl  \Big| \fint_{\Boule_{R}} \nabla \chiu \Big|^{2r} \Big\rangl^{\frac{1}{r}} \pour R \gg 1.
	\end{align}
	In Step 4, the two above inequalities \eqref{Num:809} and \eqref{Claim01} are combined in order to buckle, yielding
	\begin{align}
	  \label{Step3}
	  \Big\langl \Big(\fint_{\Boule_1} |\nabla \chiu|^2 \Big)^r \Big\rangl^{\frac{1}{r}}  \lesssim 1.
	\end{align}
	By the local regularity estimate \eqref{Lipschitz}, we deduce \eqref{i} for $\nabla\phi$.
	Inserting this into \eqref{Num:809} yields \eqref{ii} for $\nabla\phi$.
	Finally, in Step 5, potential theory shows that \eqref{iii} is a consequence of \eqref{ii}.
	The second part is divided into three steps:
	We show the analogue of \eqref{Claim01} for the flux $q_i-\abar e_i$, which, using  the estimate \eqref{i} for $\nabla \phi$, entails that $\nabla\sigma$ satisfies \eqref{ii}, from which we deduce that $\sigma$ also satisfies \eqref{iii}.
      
      \parag{Part 1, Step 1: Representation formula for the Malliavin derivative}
	We establish the following representation formula for the Malliavin derivative:
	\begin{align}
	  \label{Prop21b}
	  \frac{\partial}{\partial a} \int g \cdot \nabla \phi= \nabla v \otimes \nabla \chiu,
	\end{align}
	where $\nabla v$ denotes the square-integrable solution of
	\begin{equation}\label{Prop21b_prime}
	  \nabla \cdot ( a^\star \nabla v +g )=0.
	\end{equation}
	
	Appealing to \eqref{Def:Malliavin}, we give ourselves a smooth and compactly supported infinitesimal variation $\delta a$ of $a$ and denote by $\nabla \delta \phi$ the corresponding infinitesimal variation of $\nabla \phi$, which is the square-integrable solution of\footnote{The argument for this identity is formal, since $\nabla \phi$ is only defined for $\langl\cdot\rangl$-a.e. $a$, see Section~\ref{SecAssumpGen}. However, this is fully justified in \cite[p.\ 21]{GNO_2019_Correlatedfields}, and we also refer to \eqref{Diff_phiT} where we work with the massive corrector which is defined for every $a$.}
	\begin{align}\label{DefDeltaphi}
	  -\nabla \cdot a \nabla \delta \phi=\nabla \cdot \delta a (e + \nabla \phi ).
	\end{align}
	Since $g$ is deterministic, this implies
	\begin{equation*}
	  \int \Big(\frac{\partial}{\partial a} \int  g \cdot \nabla \phi\Big) : \delta a
	  = \int  g \cdot \nabla \delta \phi.
	\end{equation*}
	Inserting the variational formulations of \eqref{Prop21b_prime} and \eqref{DefDeltaphi} into this, we get
	\begin{equation*}
	    \int  \Big(\frac{\partial}{\partial a} \int  g \cdot \nabla \phi\Big) : \delta a
	    =-\int \nabla v \cdot a \nabla \delta \phi 
	    =\int   \nabla v \cdot \delta a \nabla \chiu.
	\end{equation*}
	This yields \eqref{Prop21b}.

      \parag{Part 1, Step 2: Proof of \eqref{Num:809}}
	The proof relies on three ingredients. 
	The first one is the $\LL^r$ version \eqref{Prop21a} of spectral gap. 
	The second one is the representation formula for the Malliavin derivative \eqref{Prop21b}.
	The third one is the annealed CZ estimate given by Proposition \ref{PropCZ}(i) below.
	
	Equipped with these ingredients, we establish \eqref{Num:809}.
	From \eqref{Vanish_expectation}, \eqref{Prop21a}, and \eqref{Prop21b}, followed by the Cauchy-Schwarz inequality in $\LL^2(\Boule_1(x))$, we deduce that
	\begin{align*}
	  \Big\langl \Big| \int g \cdot \nabla \phi \Big|^{2r} \Big\rangl^{\frac{1}{r}}
	  \lesssim~& \Big\langl \Big( \int \Big( \fint_{\Boule_1(x)} | \nabla v \otimes \nabla \chiu |\Big)^2 \dd x\Big)^{r} \Big\rangl^{\frac{1}{r}}
	  \\
	  \leq~& \Big\langl \Big( \int  \Big( \fint_{\Boule_1(x)} |\nabla v|^2 \Big) \Big( \fint_{\Boule_1(x)}|\nabla \chiu|^2\Big) \dd x \Big)^r \Big\rangl^{\frac{1}{r}}.
	\end{align*}
	Thus by duality in probability, with $1/r + 1/r^\star=1$, 
	\begin{equation}
	  \label{Dual1}
	  \begin{aligned}
	    &\Big\langl \Big| \int g \cdot \nabla \phi \Big|^{2r} \Big\rangl^{\frac{1}{r}} 
	    \lesssim \sup_{\langl F^{2r^\star} \rangl=1} \Big\langl F^2 \int  \Big( \fint_{\Boule_1(x)} |\nabla v|^2 \Big) \Big( \fint_{\Boule_1(x)}|\nabla \chiu|^2\Big) \dd x \Big\rangl.
	  \end{aligned}
	\end{equation}
	We use the H\"older inequality and the stationarity of $\nabla \chiu$ (\textit{cf.} \eqref{NablaCorrBdd}) to the effect of
	\begin{equation}
	  \label{Dual2}
	  \begin{aligned}
	  &\int \Big\langl F^2  \Big( \fint_{\Boule_1(x)} |\nabla v|^2 \Big) \Big( \fint_{\Boule_1(x)}|\nabla \chiu|^2\Big)  \Big\rangl\dd x
	  \\
	  &\qquad \qquad \leq \int \Big\langl \Big( \fint_{\Boule_1(x)}|F\nabla v|^2\Big)^{r^\star} \Big\rangl^{\frac{1}{r^\star}} \dd x
	  \Big\langl \Big( \fint_{\Boule_1} |\nabla \chiu|^2 \Big)^r \Big\rangl^{\frac{1}{r}}.
	  \end{aligned}
	\end{equation}
	Since $u \mapsto \langl |u|^{r^\star} \rangl^{\frac{1}{r^\star}}$ is a norm, by Jensen's inequality,
	\begin{equation}\label{Num:713}
	  \begin{aligned}
	    \int \Big\langl \Big( \fint_{\Boule_1(x)}|F\nabla v|^2\Big)^{r^\star}  \Big\rangl^{\frac{1}{r^\star}} \dd x
	    &\leq
	    \int \fint_{\Boule_1(x)}\langl (|F\nabla v|^2)^{r^\star}  \rangl^{\frac{1}{r^\star}} \dd x
	    \\
	    &= \int \langl |F\nabla v|^{2r^\star}  \rangl^{\frac{1}{r^\star}},
	  \end{aligned}
	\end{equation}
	where we used in the last equality the identity
	\begin{align}\label{Num:556}
	\int h(x) \dd x
	=\int \fint_{\Boule_1(x)} h(y) \dd y \dd x.
	\end{align}
	Then, since \eqref{Prop21b_prime} may be written as $\nabla \cdot ( a^\star \nabla (Fv) + Fg)=0$, we learn from Proposition \ref{PropCZ}(i) below that
	\begin{equation}\label{Num:714}
	  \int \big\langl|F\nabla v|^{2r^\star} \big\rangl^{\frac{1}{r^\star}}
	  \lesssim
	  \int \langl| F g|^{2r^\star} \rangl^{\frac{1}{r^\star}}
	  =
	  \langl F^{2r^\star} \rangl^{\frac{1}{r^\star}} \int |g|^2.
	\end{equation}
	In the last lines, we have extensively used that $F$ is a random variable independent of $x$, whereas $g$ is deterministic.
	The application of Proposition \ref{PropCZ}(i) requires $|2r^\star-2| \ll 1$, which amounts to $r \gg 1$.
	Inserting \eqref{Num:713} and \eqref{Num:714} into \eqref{Dual2}, which we use in \eqref{Dual1}, entails \eqref{Num:809}.
	
      \parag{Part 1, Step 3: Proof of \eqref{Claim01}}
	By stationarity we have
	\begin{align*}
	  \Big\langl \Big(\fint_{\Boule_1} |\nabla \chiu|^2 \Big)^r \Big\rangl^{\frac{1}{r}} 
	  =\Big\langl \fint_{\Boule_R} \Big(\fint_{\Boule_1(x)} |\nabla \chiu|^2 \Big)^r \dd x\Big\rangl^{\frac{1}{r}}.
	\end{align*}
	We now appeal to the elementary string of inequalities:
	\begin{align*}
	  \fint_{\Boule_R} \Big(\fint_{\Boule_1(x)} |\nabla \chiu|^2 \Big)^r \dd x
	  &= \fint_{\Boule_R} \Big(\fint_{\Boule_1(x)} |\nabla \chiu|^2 \Big)^{r-1} \Big(\fint_{\Boule_1(x)} |\nabla \chiu|^2 \Big) \dd x
	  \\
	  &\leq (2R)^{d(r-1)}\Big(\fint_{\Boule_{2R}} |\nabla \chiu|^2 \Big)^{r-1} \fint_{\Boule_R}\fint_{\Boule_1(x)} |\nabla \chiu|^2 \dd x
	  \\
	  &\leq 2^d(2R)^{d(r-1)}\Big(\fint_{\Boule_{2R}} |\nabla \chiu|^2 \Big)^{r}.
	\end{align*}
	Therefore, we obtain
	\begin{align}\label{2b}
	  \Big\langl \Big(\fint_{\Boule_1} |\nabla \chiu|^2 \Big)^r \Big\rangl^{\frac{1}{r}} 
	  \lesssim~&R^{d(1-\frac{1}{r})} \Big\langl \Big(\fint_{\Boule_{2R}} |\nabla \chiu|^2 \Big)^{r}\Big\rangl^{\frac{1}{r}}.
	\end{align}
	Since $u$ is $a$-harmonic, \textit{c.f.} \eqref{Def_chiu} and \eqref{Id04}, the above right-hand term may be bounded thanks to the Caccioppoli estimate~\eqref{Caccioppoli3} (with $T=\infty$)
	\begin{align}\label{Cacc_1}
	  \fint_{\Boule_{2R}} |\nabla \chiu|^2
	  \lesssim
	  \frac{1}{R^2}\fint_{\Boule_{4R}} (\chiu-\Cmu)^2 \qquad \text{for all constants } \Cmu.
	\end{align}
	We denote by $u_{R'}$  the mollification of $u$ on scale $R' \leq R$ by convolution:
	\begin{equation}\label{Num:719}
	u_{R'}(x):=\fint_{\Boule_{R'}(x)}u.
	\end{equation}
	Choosing $\Cmu:=\fint_{\Boule_{4R}} u_{R'}(x)$, using the convolution estimate and the Poincaré-Wirtinger estimate, that is,
	\begin{equation*}
	  \fint_{\Boule_{4R}}(u-u_{R'})^2\lesssim {R'}^2\fint_{\Boule_{8R}}|\nabla u|^2
	  \et \quad
	  \fint_{\Boule_{4R}}(u_{R'}-c)^2
	  \lesssim
	  R^2\fint_{\Boule_{4R}}|\nabla u_{R'}|^2,
	\end{equation*}
	we obtain by the triangle inequality
	\begin{equation}\label{Num:739}
	  \begin{aligned}
	    \fint_{\Boule_{4R}} (\chiu-\Cmu)^2
	    \lesssim~& \fint_{\Boule_{4R}} \big( \chiu-\chiu_{R'}\big)^2
	    +\fint_{\Boule_{4R}} \big(\chiu_{R'}-\Cmu\big)^2
	    \\
	    \lesssim~& {R'}^2 \fint_{\Boule_{8R}} |\nabla \chiu|^2 + R^2 \fint_{\Boule_{4R}} |\nabla\chiu_{R'}|^2.
	  \end{aligned}
	\end{equation}
	Inserting this into \eqref{Cacc_1} entails
	\begin{align}
	  \fint_{\Boule_{2R}} |\nabla \chiu|^2
	  \lesssim~& \big(\frac{R'}{R}\big)^2 \fint_{\Boule_{8R}} |\nabla \chiu|^2 
	  + \fint_{\Boule_{4R}} |\nabla\chiu_{R'}|^2.
	  \label{2c}
	\end{align}
	Since we may cover $\Boule_{8R}$ by balls $\Boule_{2R}(x_n)$ for $n \in \{1,\cdots,N\}$ and $N\lesssim 1$,
	\begin{align*}
	  \fint_{\Boule_{8R}} |\nabla \chiu|^2
	  \lesssim \sum_{n=1}^N \fint_{\Boule_{2R}(x_n)} |\nabla \chiu|^2.
	\end{align*}
	Therefore, by the stationarity of $\nabla \chiu$ and the triangle inequality,
	\begin{align}\label{Num:771}
	  \Big\langl \Big(\fint_{\Boule_{8R}} |\nabla \chiu|^2\Big)^r\Big\rangl^{\frac{1}{r}} 
	  \lesssim 
	  \Big\langl \Big(\fint_{\Boule_{2R}} |\nabla \chiu|^2\Big)^r\Big\rangl^{\frac{1}{r}}.
	\end{align}
	As a consequence, it follows from \eqref{2c} that
	\begin{align*}
	  \Big\langl \Big(\fint_{\Boule_{2R}} |\nabla \chiu|^2\Big)^r \Big\rangl^{\frac{1}{r}}
	  \lesssim~&
	  \big(\frac{R'}{R}\big)^2 \Big\langl \Big(\fint_{\Boule_{2R}} |\nabla \chiu|^2\Big)^r\Big\rangl^{\frac{1}{r}}
	  +\Big\langl \Big(  \fint_{\Boule_{4R}} |\nabla\chiu_{R'}|^2\Big)^r \Big\rangl^{\frac{1}{r}}.
	\end{align*}
	Hence, if $R'=\theta R$ with $\theta \ll 1$, the first right-hand term may be absorbed in the left-hand side.
	Inserting this into \eqref{2b} yields:
	\begin{align*}
	  \Big\langl \Big(\fint_{\Boule_1} |\nabla \chiu|^2 \Big)^r \Big\rangl^{\frac{1}{r}} 
	  \lesssim
	  R^{d(1-\frac{1}{r})}\Big\langl \Big(  \fint_{\Boule_{4R}} |\nabla\chiu_{\theta R}|^2\Big)^r \Big\rangl^{\frac{1}{r}}.
	\end{align*}
	By Jensen's inequality and the stationarity of $\nabla \chiu$, we finally obtain
	\begin{equation*}
	  \Big\langl \Big(\fint_{\Boule_1} |\nabla \chiu|^2 \Big)^r \Big\rangl^{\frac{1}{r}} 
	  \lesssim
	  R^{d(1-\frac{1}{r})}\big\langl  |\nabla\chiu_{\theta R}|^{2r} \big\rangl^{\frac{1}{r}},
	\end{equation*}
	which may be rewritten as \eqref{Claim01} (by replacing $\theta R \rightsquigarrow R$).

      \parag{Part 1, Step 4: Buckling procedure, \textit{i.e.} proof of \eqref{Step3}}
	Note that, in light of \eqref{Step3}, \eqref{Num:809} reduces to
	\begin{align}
	  \label{1prime}
	  \Big\langl \Big| \int g \cdot \nabla \phi \Big|^{2r} \Big\rangl^{\frac{1}{r}}
	  \lesssim
	  \int |g|^2
	  \pourtout 1 \ll r < \infty.
	\end{align}
	Also, taking advantage of the local regularity estimate \eqref{Lipschitz}, estimate \eqref{Step3} may be upgraded to
	\begin{align}\label{EstimNablaLp}
	  \langl |\nabla \chiu |^{2r} \rangl^{\frac{1}{r}}  \lesssim 1.
	\end{align}
	Therefore, we obtain \eqref{i} and \eqref{ii} for $\nabla\phi$.
	
	Here comes the argument for \eqref{Step3}.
	W.~l.~o.~g.\ we may assume that $r\gg 1$ (Jensen's inequality will provide the full range $[1,\infty)$).
	With the abbreviation $g_R=\frac{1}{|\Boule_{R}|}\mathds{1}_{\Boule_{R}}$, we obtain
	\begin{align*}
	  \Big\langl  \Big| \fint_{\Boule_{R}} \nabla \chiu\Big|^{2r} \Big\rangl^{\frac{1}{r}}
	  \overset{\eqref{Def_chiu}}{\leq}&1 + \Big\langl  \Big| \int g_R \nabla \phi\Big|^{2r} \Big\rangl^{\frac{1}{r}}
	  \\
	  \overset{\eqref{Num:809}}{\lesssim}& 1 + \Big\langl\Big(\fint_{\Boule_1} |\nabla \chiu|^{2}\Big)^{r} \Big\rangl^{\frac{1}{r}} \int g_R^2 
	  \\
	  \lesssim~& 1 + R^{-d}\Big\langl\Big(\fint_{\Boule_1} |\nabla \chiu|^{2}\Big)^{r} \Big\rangl^{\frac{1}{r}}.
	\end{align*}
	Inserting this estimate into \eqref{Claim01} yields
	\begin{align*}
	  \Big\langl \Big(\fint_{\Boule_1} |\nabla \chiu|^2 \Big)^r \Big\rangl^{\frac{1}{r}} 
	  \lesssim
	  R^{d(1-\frac{1}{r})} +R^{-\frac{d}{r}}\Big\langl\Big(\fint_{\Boule_1} |\nabla \chiu|^{2}\Big)^{r} \Big\rangl^{\frac{1}{r}}.
	\end{align*}
	Choosing $R\gg 1$, the above r.~h.~s.\ may be absorbed into the above left-hand side.
	This establishes \eqref{Step3}.
	
      \parag{Part 1, Step 5: Proof of \eqref{iii} for $\phi$} This step is dedicated to proving
	\begin{align}
	  \label{iiia}
	  \langl |\phi(x) - \phi(0)|^{2r} \rangl^{\frac{1}{r}} \lesssim \mu_d^2(|x|).
	\end{align}
	By the triangle inequality and the stationarity of $x \mapsto \phi(x)-\fint_{\Boule_1(x)} \phi$, it is sufficient to show the following estimates:
	\begin{align}
	  \Big\langl \Big| \fint_{\Boule_1(x)} \phi - \fint_{\Boule_1(0)} \phi\Big|^{2r} \Big\rangl^{\frac{1}{r}} &\lesssim \mu_d^2(|x|),
	  \label{4a}
	  \\
	  \Big\langl \Big| \phi(0) - \fint_{\Boule_1(0)} \phi\Big|^{2r} \Big\rangl^{\frac{1}{r}} &\lesssim 1.
	  \label{4b}
	\end{align}
	
	Here comes the argument for \eqref{4a}.
	We have the representation formula
	\begin{align}\label{RepFormula}
	  \fint_{\Boule_1(x)} \phi - \fint_{\Boule_1(0)} \phi
	  &=\int  \nabla \ubar \cdot \nabla \phi,
	\end{align}
	where $\ubar$ denotes the decaying solution of
	\begin{align*}
	  -\Delta \ubar =\frac{1}{|\Boule_1|} ( \mathds{1}_{\Boule_1(x)} - \mathds{1}_{\Boule_1(0)} ).
	\end{align*}
	By classical potential theory\footnote{Indeed, for $d >3$, this follows from the energy identity $\int  |\nabla \ubar|^2 = \fint_{\Boule_1(x)} \ubar - \fint_{\Boule_1(0)} \ubar$ via the Sobolev inequality $(\int  \ubar^{\frac{2d}{d-2}} )^{\frac{d-2}{2d}} \lesssim ( \int  |\nabla \ubar|^2 )^{\frac{1}{2}}$. In dimension $d=2$, we may appeal to the explicit representation $\nabla \ubar(y)=\fint_{\Boule_1}  ( \nabla G(y-x-z)-\nabla G(y-z) ) \dd z$ via the fundamental solution $\nabla G(y):=\frac{1}{2\pi} \frac{y}{|y|^2}$.},
	\begin{align}\label{IneqPot}
	  \int  |\nabla \ubar|^2 \lesssim \mu^2_d(|x|).
	\end{align}
	Since $\ubar$ is deterministic, as a consequence of \eqref{1prime} and \eqref{RepFormula}, we obtain
	\begin{align*}
	  \Big\langl \Big| \fint_{\Boule_1(x)} \phi - \fint_{\Boule_1(0)} \phi\Big|^{2r} \Big\rangl^{\frac{1}{r}} 
	  &\lesssim \int  |\nabla \ubar|^2,
	\end{align*}
	which entails \eqref{4a} by \eqref{IneqPot}.
	
	We now argue for \eqref{4b}.
	By a Sobolev embedding, provided $2r >d$, there holds:
	\begin{align*}
	  \Big| \phi(0) - \fint_{\Boule_1(0)} \phi\Big|^{2r} 
	  \lesssim \fint_{\Boule_1(0)} |\nabla \phi|^{2r}.
	\end{align*}
	Hence, taking the expectation, and recalling that $\nabla \phi$ is stationary and satisfies \eqref{i}, we deduce that
	\begin{align*}
	  \Big\langl \Big| \phi(0) - \fint_{\Boule_1(0)} \phi\Big|^{2r}\Big\rangl 
	  \lesssim \Big\langl \fint_{\Boule_1(0)} |\nabla \phi|^{2r} \Big\rangl \lesssim 1.
	\end{align*}
	This shows \eqref{4b} and concludes the proof of \eqref{iiia}.
      
      \parag{Part 2, Step 1: Proof of \eqref{ii} for $\nabla\sigma$}
	We first show the analogue of \eqref{1prime} for the flux $q_i$ (\textit{cf.} \eqref{Defqi}); that is, if $g$ is a deterministic vector field, then
	\begin{align}
	  \label{1flux}
	  \Big\langl \Big| \int  g \cdot ( q_i-\abar e_i) \Big|^{2r} \Big\rangl^{\frac{1}{r}}
	  \lesssim
	  \int |g|^2,
	\end{align}
	from which we shall deduce \eqref{ii} for $\nabla\sigma$.
	The only change is in the representation formula:
	\begin{align}\label{1fluxprime}
	  \frac{\partial}{\partial a} \int  g \cdot ( q_i-\abar e_i) =( \nabla v + g ) \otimes \nabla \chiu_i,
	\end{align}
	where the square-integrable vector field $\nabla v$ satisfies $\nabla \cdot a^\star ( \nabla v + g)=0$.
	Indeed, we have
	\begin{align*}
	  \int   \Big(\frac{\partial}{\partial a}\int  g \cdot ( q_i-\abar e_i) \Big): \delta a
	  \overset{\eqref{Defqi}}=~&
	  \int  g \cdot ( \delta a ( e_i + \nabla \phi_i)+ a \nabla \delta \phi_i),
	\end{align*}
	where $\nabla \delta\phi_i$ satisfies \eqref{DefDeltaphi}.
	The rightmost term in the integrand is dealt with as in \eqref{Prop21b}, so that \eqref{1fluxprime} is proved.
	
	Next, we show that $\nabla\sigma$ satisfies \eqref{ii}.
	Denoting by $\nabla \vbar$ the square-integrable solution of $\Delta \vbar+\nabla \cdot g=0$, we obtain
	\begin{align*}
	  \int  g \cdot \nabla \sigma_{ijk}
	  =~& -\int  ( \vbar\partial_j q_{ik} - \vbar \partial_k q_{ij})
	  \overset{\eqref{Id02}}{=}\int   
	  \big( \partial_j \vbar(q_{ik}- \abar_{ki}) - \partial_k\vbar (q_{ij}-\abar_{ji})\big),
	\end{align*}
	where \eqref{1flux} justifies this formal integration by parts.
	Therefore, we deduce from \eqref{1flux} that
	\begin{align*}
	  \Big\langl \Big|\int  g \cdot \nabla \sigma_{ijk}\Big|^{2r} \Big\rangl^{\frac{1}{r}}
	  \lesssim
	  \int  |\nabla \vbar |^2
	  \leq
	  \int  |g |^2.
	\end{align*}

      \parag{Part 2, Step 2: Proof of \eqref{i} for $\nabla \sigma$} 
      By the CZ estimates applied to the constant-coefficient equation \eqref{Id03} (by a post-processing of \cite[Th.\ 9.11, p.\ 235]{GT}), there holds
      \begin{align*}
        \fint_{\Boule_R} |\nabla \sigma |^{2r}
        \lesssim
        \Big(\fint_{\Boule_{2R}} |\nabla \sigma |^{2} \Big)^{r}
        +
        \fint_{\Boule_{2R}} |q |^{2r}.
      \end{align*}
      By ergodicity and stationarity, when $R \uparrow \infty$, each of these spatial averages converges almost surely to the associated expectation (this is a consequence of the Birkhoff theorem). Therefore, we get
      \begin{align*}
        \langl |\nabla \sigma|^{2r} \rangl^{\frac{1}{r}} 
        \lesssim 
        \langl|\nabla \sigma |^{2} \rangl
        +\langl |q |^{2r}\rangl^{\frac{1}{r}},
      \end{align*}
      from which we deduce that $\nabla\sigma$ satisfies \eqref{i}, by \eqref{NablaCorrBdd} and \eqref{EstimNablaLp}.
      
      \parag{Part 2, Step 3: Proof of \eqref{iii} for $\sigma$}
	By arguments identical to Part 1, Step 5, we deduce that $\sigma$ satisfies \eqref{iii}.
    \end{proof}

  \section{Oscillations: Estimate of homogenization error in strong norms}\label{Sec:Osc}
    Equipped with the stochastic corrector estimates, we now may tackle the homogenization error. 
    In this section, we address the error in the two-scale expansion on the level of the gradient in strong $\LL^p$-norms.
    In fact, in view of \eqref{12}, the error estimate is a corollary of the corrector estimates in Proposition \ref{Propcorr} and the annealed CZ estimate of Proposition \ref{PropCZ}(ii). 
    
    \begin{corollary}\label{Cor1}
      Suppose that $f$  a deterministic, smooth, and compactly supported vector field. 
      Under the assumptions of Section \ref{SecAssumpGauss}, let the square-integrable vector fields $\nabla u$, $\nabla \ubar$ be related to $f$ by
      \begin{align*}
	\nabla \cdot ( a \nabla u + f ) =0 =\nabla \cdot( \abar \nabla \ubar + f).
      \end{align*}
      Then, for any $p \in (1,\infty)$ and $r\in[1,\infty)$, there holds:
      \begin{align}
	\begin{aligned}
	  \Big\langl \Big( \int  | \nabla ( u - (1+\phi_i\partial_i) \ubar ) |^p \Big)^{r} \Big\rangl^{\frac{1}{pr}}
	  \lesssim_{\gamma,p, r} \Big( \int  \big|\mu_d(|\cdot| ) \nabla f\big|^p \Big)^{\frac{1}{p}}.
	\end{aligned}
	\label{Estim02}
      \end{align}
    \end{corollary}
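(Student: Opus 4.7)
The plan is to exploit the algebraic identity \eqref{12}, which states that $w := u - (1 + \phi_i\partial_i)\bar u$ satisfies
\begin{align*}
  -\nabla\cdot a\nabla w = \nabla\cdot\big((\phi_i a - \sigma_i)\nabla\partial_i \bar u\big).
\end{align*}
This rewrites the error as the solution of a divergence-form equation whose right-hand side is expressed as the product of the extended correctors $(\phi, \sigma)$ and the Hessian of the homogenized solution. Since the annealed CZ estimate of Proposition \ref{PropCZ}(ii) is built precisely for equations of this form, the strategy is to apply it to $w$, then estimate the resulting right-hand side using the corrector moment bounds of Proposition \ref{Propcorr} and classical weighted CZ theory for the constant-coefficient equation $\nabla\cdot(\bar a\nabla\bar u + f)=0$.

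Concretely, I would first apply Proposition \ref{PropCZ}(ii) to obtain
\begin{align*}
  \Big\langle\Big(\int|\nabla w|^p\Big)^r\Big\rangle^{1/(pr)} \lesssim \Big\langle\Big(\int\big|(\phi_i a - \sigma_i)(x)\,\nabla\partial_i\bar u(x)\big|^p\,dx\Big)^{r'}\Big\rangle^{1/(pr')}
\end{align*}
with an arbitrarily small loss $r'>r$ in stochastic integrability. Since $\bar u$ is deterministic, Minkowski's integral inequality exchanges the spatial $L^p$-norm with the stochastic $L^{pr'}$-norm:
\begin{align*}
  \Big\langle\Big(\int|\phi_i a - \sigma_i|^p |\nabla^2\bar u|^p\Big)^{r'}\Big\rangle^{1/(pr')} \leq \Big(\int\langle|(\phi_i a - \sigma_i)(x)|^{pr'}\rangle^{1/r'}|\nabla^2\bar u(x)|^p\,dx\Big)^{1/p}.
\end{align*}
After fixing the normalization $\phi_i(0)=0$, $\sigma_i(0)=0$ (which only changes $w$ by a constant multiple of $\partial_i\bar u$, harmlessly affecting $\nabla w$ in a way easily absorbed), the increment bound \eqref{iii} together with the essential boundedness of $a$ gives $\langle|(\phi_i a - \sigma_i)(x)|^{pr'}\rangle^{1/(pr')}\lesssim \mu_d(|x|)$.

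It then remains to invoke the classical weighted CZ estimate for the homogenized equation,
\begin{align*}
  \Big(\int\mu_d^p(|\cdot|)|\nabla^2\bar u|^p\Big)^{1/p} \lesssim \Big(\int\mu_d^p(|\cdot|)|\nabla f|^p\Big)^{1/p},
\end{align*}
which holds because $\mu_d(|\cdot|)$ is bounded below by $1$ and slowly varying, and hence belongs to the Muckenhoupt class $A_p$ for every $p\in(1,\infty)$. Chaining the three steps yields \eqref{Estim02}. The main subtlety is bookkeeping of the loss $r \rightsquigarrow r'$ in stochastic integrability from the annealed CZ estimate; this is harmless because the corrector moment bounds of Proposition \ref{Propcorr} hold for arbitrary stochastic exponents. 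A minor technical point concerns low dimensions ($d \leq 2$), where the correctors are not stationary and the anchoring at the origin, combined with $\mu_d$ tracking the growth of $(\phi,\sigma)$, becomes the natural mechanism that produces the weight $\mu_d$ in the final right-hand side.
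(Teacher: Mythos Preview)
Your overall strategy is correct and matches the paper's: use identity \eqref{12}, apply the annealed CZ estimate of Proposition~\ref{PropCZ}(ii), feed in the corrector bounds from Proposition~\ref{Propcorr}(iii), and finish with weighted constant-coefficient CZ. However, your first displayed inequality is not a consequence of Proposition~\ref{PropCZ}(ii) as stated: that proposition controls norms of the form $\|\cdot\|_{\LL^p_{\R^d}(\LL^{r'}_{\langle\cdot\rangle})}$ (stochastic exponent \emph{inside}), whereas the quantity $\langle(\int|\nabla w|^p)^r\rangle^{1/(pr)}$ is an $\LL^{pr}_{\langle\cdot\rangle}(\LL^p_{\R^d})$-norm (stochastic exponent \emph{outside}). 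You cannot read off an $\LL^{\bullet}_{\langle\cdot\rangle}(\LL^p_{\R^d})\to\LL^{\bullet}_{\langle\cdot\rangle}(\LL^p_{\R^d})$ bound directly.

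The fix is to reorder: apply Minkowski on the \emph{left-hand side} first (since $pr\ge p$ one has $\|\nabla w\|_{\LL^{pr}_{\langle\cdot\rangle}(\LL^p_{\R^d})}\le\|\nabla w\|_{\LL^p_{\R^d}(\LL^{pr}_{\langle\cdot\rangle})}$), and only then invoke Proposition~\ref{PropCZ}(ii) with stochastic exponents $pr\rightsquigarrow r''$ for any $r''>pr$. This lands you directly in $(\int\langle|(\phi_i a-\sigma_i)|^{r''}\rangle^{p/r''}|\nabla^2\ubar|^p)^{1/p}$, so your second Minkowski step becomes unnecessary. The remainder (anchoring $(\phi,\sigma)(0)=0$, corrector bound $\langle|(\phi_ia-\sigma_i)(x)|^{r''}\rangle^{1/r''}\lesssim\mu_d(|x|)$, weighted CZ) is correct as you wrote it.

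With this reordering your route is actually somewhat more direct than the paper's. The paper instead multiplies \eqref{12} by an auxiliary random variable $F=F(a)\ge 0$, applies Proposition~\ref{PropCZ}(ii) to $Fw$ (exploiting that $F$ is constant in space), separates $F$ from the correctors by H\"older in probability, and recovers the $\LL^{r}_{\langle\cdot\rangle}(\LL^p_{\R^d})$-norm by duality over $F$. Your Minkowski shortcut avoids the duality bookkeeping; the paper's route, on the other hand, makes the mechanism of the stochastic-integrability loss (H\"older against the corrector moments) more transparent.
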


    It is customary to rescale the space variable according to macroscopic coordinates $\hat{x}=\epsilon x$, where $\epsilon \ll 1$ corresponds to the ratio of the microscale coming from the coefficient field (implicitly contained in our assumption \eqref{wg01}) and the macroscale coming from the r.~h.~s.\ $f$.
    By scaling, we have 
    \begin{equation*}
      \dd \hat{x}=\epsilon^d \dd x,\quad \hat{\nabla} = \epsilon^{-1}\nabla, \quad \hat{u}(\hat{x})=\epsilon u(x) \quad \et  \quad \hat{\phi}(\hat{x})=\epsilon\phi(x).
    \end{equation*}
    In this new notation, Corollary \ref{Cor1} reads:
    \begin{corollary}\label{Cor2}
      Suppose that $\hat{f}$  a deterministic, smooth, and compactly supported vector field. 
      Under the assumptions of Section \ref{SecAssumpGauss}, let the square-integrable vector fields $\nabla \hat{u}$, $\nabla \ubar$ be related to $\hat{f}$ by
      \begin{align*}
        \hat{\nabla} \cdot \big( a\big(\frac{\cdot}{\epsilon}\big) \hat{\nabla} \hat{u} + \hat{f} \big) 
        =0 
        =\hat{\nabla} \cdot( \abar \hat{\nabla} \ubar + \hat{f})
      \end{align*}
      for some $\epsilon\leq 1$.
      Then, for any $p \in (1,\infty)$ and $r\in [1,\infty)$, there holds:
      \begin{align}
	\begin{aligned}
	\Big\langl \Big( \int  \big| \hat{\nabla} \big(\hat{u} - \big(1+\hat{\phi}_i \hat{\partial}_i\big) \ubar \big) \big|^p  \dd \hat{x} \Big)^{r} \Big\rangl^{\frac{1}{pr}}
        &\lesssim_{\gamma,p, r, \hat{f}} \epsilon \mu_d(1/\epsilon)
        \\
        &=
	\begin{cases}
          \epsilon^{\frac{1}{2}}
          & \si \; d=1,\\
	  \epsilon \ln^{\frac{1}{2}}(1/\epsilon +2)
	  & \si \; d=2,\\
	  \epsilon
	  & \si \; d>2.
        \end{cases}
	\end{aligned}
	\label{Estim03}
      \end{align}
    \end{corollary}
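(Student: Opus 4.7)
The strategy is to reduce Corollary \ref{Cor2} to Corollary \ref{Cor1} by the scaling argument sketched just before the statement, so the proof is essentially bookkeeping. First, I would introduce microscopic counterparts by setting $u(x):=\epsilon^{-1}\hat u(\epsilon x)$, $\ubar^{\#}(x):=\epsilon^{-1}\ubar(\epsilon x)$, $f(x):=\hat f(\epsilon x)$, and noting that the microscopic correctors associated with the (stationary) coefficient $a$ are $\phi_i(x)=\epsilon^{-1}\hat\phi_i(\epsilon x)$. A direct computation shows $\nabla\cdot(a\nabla u+f)=0=\nabla\cdot(\abar\nabla\ubar^{\#}+f)$, so that Corollary \ref{Cor1} applies to the triple $(u,\ubar^{\#},f)$. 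Crucially, the ensemble $\langl\cdot\rangl$ is the same microscopic one throughout, so the implicit constants in \eqref{Estim02} are independent of $\epsilon$.

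Next I would track the scaling of both sides of \eqref{Estim02}. Writing $\hat w(\hat x):=\hat u(\hat x)-(1+\hat\phi_i\hat\partial_i)\ubar(\hat x)$, the identity $u-(1+\phi_i\partial_i)\ubar^{\#}=\epsilon^{-1}\hat w(\epsilon\,\cdot)$ and the chain rule yield
\begin{equation*}
\int |\nabla(u-(1+\phi_i\partial_i)\ubar^{\#})|^p\,\dd x
=\epsilon^{-d}\int |\hat\nabla\hat w|^p\,\dd\hat x,
\end{equation*}
while on the r.~h.~s.\ one similarly computes
\begin{equation*}
\int |\mu_d(|\cdot|)\nabla f|^p\,\dd x
=\epsilon^{\,p-d}\int \mu_d^p(|\hat x|/\epsilon)\,|\hat\nabla\hat f(\hat x)|^p\,\dd\hat x.
\end{equation*}

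The only analytic step beyond scaling is to absorb the weight $\mu_d(|\hat x|/\epsilon)$ into a constant depending on $\hat f$. Since $\hat f$ is compactly supported, there exists $R=R(\hat f)$ with $\mathrm{supp}\,\hat\nabla\hat f\subset\Boule_R$; by monotonicity of $\mu_d$ and its explicit form \eqref{Defmu}, one checks $\mu_d(R/\epsilon)\lesssim_{\hat f}\mu_d(1/\epsilon)$ in each dimension (in $d=1$ by $\sqrt{R/\epsilon+1}\lesssim\sqrt{R}\,\sqrt{1/\epsilon+1}$, in $d=2$ by $\ln^{1/2}(R/\epsilon+2)\lesssim_R\ln^{1/2}(1/\epsilon+2)$, and trivially for $d>2$). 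This gives
\begin{equation*}
\Big(\int|\mu_d(|\cdot|)\nabla f|^p\Big)^{1/p}
\lesssim_{\hat f}\epsilon^{\,1-d/p}\,\mu_d(1/\epsilon)\,\Big(\int|\hat\nabla\hat f|^p\,\dd\hat x\Big)^{1/p}.
\end{equation*}

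Inserting the two scaling identities and the weight estimate into \eqref{Estim02} and multiplying through by $\epsilon^{d/p}$, the powers $\epsilon^{\pm d/p}$ cancel and one is left with the desired $\epsilon\,\mu_d(1/\epsilon)$ bound in \eqref{Estim03}; the explicit breakdown into the three dimensional regimes then simply reads off \eqref{Defmu}. There is no real obstacle here: the argument is a dimensional analysis, and the only slightly delicate check is the uniform control of $\mu_d(|\hat x|/\epsilon)$ on $\mathrm{supp}\,\hat\nabla\hat f$, which is precisely where the $\hat f$-dependence of the implicit constant enters.
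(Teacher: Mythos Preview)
Your proposal is correct and follows essentially the same approach as the paper: both reduce to Corollary \ref{Cor1} via the scaling $x=\hat x/\epsilon$, and then estimate the weight $\mu_d(|\hat x|/\epsilon)$ by $\mu_d(1/\epsilon)$ up to an $\hat f$-dependent constant. The only cosmetic difference is that for $d=2$ the paper uses the multiplicative inequality $\ln(|\hat x|/\epsilon+2)\lesssim\ln(|\hat x|+2)\ln(1/\epsilon+2)$ (absorbing $\ln(|\hat x|+2)$ into the $\hat f$-dependent constant), whereas you first bound $|\hat x|\le R$ on $\mathrm{supp}\,\hat\nabla\hat f$ and then estimate $\mu_d(R/\epsilon)\lesssim_R\mu_d(1/\epsilon)$; both are equivalent here.
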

    Hence, for dimensions $d>2$, stochastic homogenization has linear order of convergence in $\epsilon$, as in periodic homogenization.
    In fact, higher-order correctors provide the dimension-dependent order of convergence $d/2$  (with a logarithmic correction in even dimensions), \textit{c.f.} \cite[Prop.\ 2.7]{DuerinckxOtto_2019}.

    \begin{proof}[Proof of Corollary \ref{Cor1}]
    We first note that is it sufficient to establish
    \begin{align}
      \label{Estim01}
      \begin{aligned}
	\Big\langl \Big( \int  | \nabla ( u - (1+\phi_i\partial_i) \ubar ) |^p \Big)^{r} \Big\rangl^{\frac{1}{r}}
	\lesssim_{\gamma,p, r} \int   \big|\mu_d(|\cdot|) \nabla^2 \ubar\big|^p.
      \end{aligned}
    \end{align}
    Indeed, observe that $\mu_d^p(|\cdot|)$ is a Muckenhoupt weight of class $A_p$ \cite[Def.\ 7.1.3, p.\ 503]{Grafakos_book} if $d\geq 2$.
    Thus, by weighted CZ estimates \cite[Th.\ 7.4.6, p.\ 540]{Grafakos_book} for the operator $\nabla (-\nabla \cdot \abar \nabla)^{-1} \nabla \cdot$, there holds
    \begin{equation*}
      \int  \big|\mu_d (|\cdot|) \nabla^2 \ubar\big|^p \lesssim \int \big|\mu_d (|\cdot|) \nabla f\big|^p.
    \end{equation*}
    (This estimate is trivially true if $d=1$.)
    As a consequence, \eqref{Estim01} yields \eqref{Estim02}.
  
    Here comes the argument for \eqref{Estim01}.
    We multiply \eqref{12} by an arbitrary random variable $F=F(a) \geq 0$, to the effect of
    \begin{align*}
      -\nabla\cdot a \nabla(F ( u - (1+\phi_i\partial_i) \ubar )) = \nabla \cdot F (\phi_i a - \sigma_i ) \nabla \partial_i \ubar.
    \end{align*}
    By Proposition \ref{PropCZ} (ii), this yields
    \begin{align*}
      \int  \langl |F \nabla ( u - (1+\phi_i\partial_i) \ubar)|^p \rangl
      \lesssim
      \int  \langl F^{s}|(\phi_i a - \sigma_i ) \nabla \partial_i \ubar|^{s} \rangl^{p/s}
    \end{align*}
    for some $s >p$ to be fixed later.
    The integrand of the above right-hand side may be estimated by means of the H\"older inequality with $t, t^\star>1$ satisfying $1/t+1/t^\star=1$ (to be fixed later) as
    \begin{align*}
      \langl F^{s}|(\phi_i a - \sigma_i ) \nabla \partial_i \ubar|^{s} \rangl^{p/s}
      =~&\langl F^{s}|\phi_i a - \sigma_i|^{{s}} \rangl^{p/{s}} |\nabla \partial_i \ubar|^p
      \\
      \leq~& \langl F^{{s}t^\star}\rangl^{\frac{p}{{s}t^\star}}
      \langl|\phi_i a - \sigma_i|^{{s}t} \rangl^{\frac{p}{{s}t}} |\nabla \partial_i \ubar|^p.
    \end{align*}
    W.~l.~o.~g.\ we may assume the anchoring $(\phi,\sigma)(0)=0$ so that by Proposition \ref{Propcorr} we have
    \begin{align*}
      \langl|(\phi_i a - \sigma_i )(x)|^{{s}t} \rangl^{\frac{p}{{s}t}} \lesssim \mu^p_d(|x|).
    \end{align*}
    Therefore, we deduce from the above estimates that
    \begin{align}
      &\Big\langl F^p\int  |\nabla ( u - (1+\phi_i\partial_i) \ubar)|^p \Big\rangl
      \lesssim
      \langl (F^p)^{\frac{{s}t^\star}{p}}\rangl^{\frac{p}{{s}t^\star}}
      \int \big| \mu_d(|\cdot|) \nabla^2 \ubar \big|^p.
      \label{Abo1}
    \end{align}
    We now pick an exponent $t^\star < r^\star$ and specify
    \begin{align*}
      s:=\frac{pr^\star}{t^\star} >p.
    \end{align*}
    Hence, \eqref{Abo1} reads
    \begin{align*}
      \Big\langl F^p\int  |\nabla ( u - (1+\phi_i\partial_i) \ubar)|^p \Big\rangl
      \lesssim
      \langl F^{pr^\star}\rangl^{\frac{1}{r^\star}}
      \int \big| \mu_d(|\cdot|) \nabla^2 \ubar\big|^p,
    \end{align*}
    so that a duality argument yields \eqref{Estim01}.
  \end{proof}

    \begin{proof}[Proof of Corollary \ref{Cor2}]
      Rescaling \eqref{Estim02}, we obtain:
      \begin{align*}
        \Big\langl \Big( \int  \big| \hat{\nabla} \big(\hat{u} - \big(1+\hat{\phi}_i \hat{\partial}_i\big) \ubar \big) \big|^p  \dd \hat{x} \Big)^{r} \Big\rangl^{\frac{1}{pr}}
        \lesssim
        \Big( \int   \big|\epsilon \mu_d\big(\frac{|\hat{x}|}{\epsilon}\big) \hat{\nabla} \hat{f}\big|^p  \dd \hat{x}\Big)^{\frac{1}{p}}.
      \end{align*}
      Recalling the expression \eqref{Defmu} of $\mu_d$ directly yields \eqref{Estim03} for $d\neq 2$. If $d=2$, we additionally make use of the inequality $\ln(|x|/\epsilon+2) \lesssim \ln(|x|+2) \ln( 1/\epsilon+2)$ to get \eqref{Estim03}.
    \end{proof}

\section{Fluctuations: Estimate of homogenization error in weak norms}\label{SecFluctu}

  This section is devoted to studying the fluctuations of an observable
  \begin{align}\label{MacroF}
    G:=\int  g \cdot \nabla u,
  \end{align}
  where the square-integrable vector fields $\nabla u$ and $f$ are related through \eqref{Complexe} for deterministic vector fields $g$ and $f$.
  More precisely, we are interested in macroscopic observables, that is
  \begin{align}
    \label{Macrohf}
    g(x):=\epsilon^{d} \hat{g}(\epsilon x) \et f(x):=\hat{f}(\epsilon x),
  \end{align}
  where $\hat{g}$ and $\hat{f}$ are fixed vector fields of scale $1$ and $\epsilon \ll 1$.
  In \cite{Marahrens_Otto}, it has been observed that the variance of $G$ has a central-limit theorem scaling in $\epsilon^{-1}$. Moreover, in \cite{GuMourrat_2016}, it is shown that the leading order (in $\epsilon \ll 1$) of this variance may be explicitly characterized in terms of a quartic tensor $\QU$ introduced in \cite{Mourrat_Otto_2016}.
  
  Surprisingly enough, the naive (but natural) idea of replacing $\nabla u$ by its two-scale expansion in \eqref{MacroF} gives the wrong leading order, as was discovered by \cite{GuMourrat_2016}. 
  However, the two-scale expansion may be used in a more subtle way, as was discovered in \cite{DuerinckxGO_2016}: We define $\nabla\vbar $ as the square-integrable vector field related to $g$ through
  \begin{align}\label{Defh}
    \nabla \cdot ( \abar^\star \nabla \vbar + g)=0.
  \end{align}
  Then, $G$ may be written in terms of the homogenization commutator (see \eqref{1a}) as
  \begin{align}\label{Gread}
    G=~&-\int  \nabla \vbar \cdot \abar \nabla  u=\int  \nabla \vbar \cdot (a-\abar) \nabla  u + F,
  \end{align}
  where $F=-\int  \nabla \vbar \cdot a \nabla u=\int  \nabla \vbar \cdot f$ is deterministic.
  It turns out that in the homogenization commutator $(a-\abar)\nabla u$, it is legitimate to approximate $\nabla u$ by its two-scale expansion. 
  This leads to the \textit{standard homogenization commutator}, which is the stationary random field $\Xi$ defined by
  \begin{align}\label{Def_Xi}
    \Xi e_i := ( a - \abar ) ( e_i+\nabla \phi_i)
  \end{align}
  (see \cite{DuerinckxGO_2016}, and the discussion of the literature there).
  This observation is made rigorous in the result below.  
  This result is identical to \cite[Prop.\ 3.2]{DuerinckxOtto_2019}, which itself is a continuum version of \cite[Th.\ 2]{DuerinckxGO_2016}.
  We reproduce the streamlined proof in order to highlight the similarity to the one of Proposition \ref{Propcorr} and the use of the annealed CZ estimates, namely Proposition \ref{PropCZ}(ii).
  
  \begin{proposition}\label{Propfluct}
    Let the ensemble $\langl \cdot \rangl$ be defined as in Section \ref{SecAssumpGauss}.
    Assume that $h$ and $f$ are square-integrable deterministic vector fields.
    Let the square-integrable vector fields $\nabla u$ and $\nabla \ubar$ be related to $f$ through \eqref{Complexe} and \eqref{Simple}.
    Then, the random variable
    \begin{align}\label{DefH}
      H:=\int  h \cdot ( a - \abar ) \big(\nabla u -\partial_i \ubar (e_i+\nabla \phi_i )\big)
    \end{align}
    satisfies, for $\mu_d$ defined by \eqref{Defmu},
    \begin{align}\label{Num:004}
      \langl ( H - \langl H \rangl)^{2 r} \rangl^{\frac{1}{r}} 
      \lesssim_{\gamma, r} \Big( \int  | h |^4 \int  \big|\mu_d(|\cdot|)\nabla f\big|^4 
      + \int  |f|^4\int \big|\mu_d(|\cdot|) \nabla h \big|^4 \Big)^{\frac{1}{2}}.
    \end{align}
  \end{proposition}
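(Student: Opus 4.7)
The plan is to mirror the three-step strategy of the proof of Proposition \ref{Propcorr}: derive a representation formula for the Malliavin derivative $\frac{\partial H}{\partial a}$ via auxiliary dual problems, insert it into the $\LL^{2r}$-spectral gap \eqref{Prop21a}, and close the estimate using the annealed Calder\'on--Zygmund estimate of Proposition \ref{PropCZ}(ii) together with the moment bounds on the extended correctors of Proposition \ref{Propcorr}. The weighted $\LL^4$-structure on the r.h.s.\ of \eqref{Num:004} should emerge by trading $\nabla^2\ubar$ for $\nabla f$ (resp.\ $\nabla^2\vbar$ for $\nabla h$) through constant-coefficient CZ applied to \eqref{Simple} (resp.\ \eqref{Defh}), while the weight $\mu_d(|\cdot|)$ is supplied by the corrector growth estimate \eqref{iii}.

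For the representation formula, I would introduce two families of square-integrable dual vector fields: let $\nabla w$ and $\nabla w_i$ ($i=1,\ldots,d$) be the solutions of
\[
\nabla\cdot\bigl(a^\star\nabla w + (a^\star-\abar^\star)h\bigr)=0 \et \nabla\cdot\bigl(a^\star\nabla w_i + \partial_i\ubar\,(a^\star-\abar^\star)h\bigr)=0.
\]
Perturbing $a$ by an arbitrary $\delta a$ in \eqref{DefH} yields a direct contribution $\int h\cdot\delta a\,(\nabla u-\partial_i\ubar(e_i+\nabla\phi_i))$ together with two indirect contributions involving $\nabla\delta u$ and $\nabla\delta\phi_i$, which satisfy sensitivity equations of the same form as \eqref{DefDeltaphi}. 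Testing these sensitivity equations against $w$ and $w_i$ and integrating by parts, exactly as in Step 1 of the proof of Proposition \ref{Propcorr}, I expect
\[
\frac{\partial H}{\partial a} = (h+\nabla w)\otimes\nabla u - (h\partial_i\ubar+\nabla w_i)\otimes(e_i+\nabla\phi_i).
\]

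I then insert this representation into \eqref{Prop21a} and apply Cauchy--Schwarz inside each ball $\Boule_1(x)$ to split each tensor product into a product of two $\LL^2$-averages. A duality-in-probability argument analogous to \eqref{Dual1}--\eqref{Num:714} reduces the estimate to controlling $\int\langl|F\nabla u|^{2r^\star}\rangl^{1/r^\star}$, $\int\langl|F\nabla w|^{2r^\star}\rangl^{1/r^\star}$ and $\int\langl|F\nabla w_i|^{2r^\star}\rangl^{1/r^\star}$, each of which is bounded by Proposition \ref{PropCZ}(ii) applied to the corresponding elliptic problem, while the stationary factor $e_i+\nabla\phi_i$ is controlled by Proposition \ref{Propcorr}\eqref{i}. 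To extract the weight $\mu_d$ and the gradient $\nabla f$ in the first summand of \eqref{Num:004}, I would use the two-scale identity \eqref{12} to rewrite $\nabla u-\partial_i\ubar(e_i+\nabla\phi_i) = \nabla(u-(1+\phi_i\partial_i)\ubar)+\phi_i\nabla\partial_i\ubar$ and apply Proposition \ref{PropCZ}(ii) once more to the equation $-\nabla\cdot a\nabla(u-(1+\phi_i\partial_i)\ubar)=\nabla\cdot((\phi_i a-\sigma_i)\nabla\partial_i\ubar)$; the growth estimate \eqref{iii} of the extended correctors then supplies the weight $\mu_d$, while constant-coefficient CZ on \eqref{Simple} trades $\nabla^2\ubar$ for $\nabla f$. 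The second summand arises symmetrically via the analogous two-scale expansion on the dual side, where the gradient $\nabla h$ enters via $\nabla^2\vbar$ through \eqref{Defh}.

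The main obstacle, as in Proposition \ref{Propcorr}, will be the careful bookkeeping of the mixed spatial/stochastic Lebesgue exponents: Proposition \ref{PropCZ}(ii) must be invoked for three distinct elliptic problems with compatible input and output exponents, and the stochastic conjugate $r^\star$ must be chosen sufficiently close to $1$ (equivalently, $r$ sufficiently large) so that the annealed CZ estimate remains in its mildly perturbative regime while still producing the sharp $\LL^4$-structure of \eqref{Num:004}; the full range $r\in[1,\infty)$ is then recovered by Jensen's inequality in probability. A secondary subtlety is ensuring that the direct piece of $\partial H/\partial a$, which carries the two-scale gradient error $\nabla u-\partial_i\ubar(e_i+\nabla\phi_i)$ explicitly, is controlled by the same weighted $\LL^4$-structure as the indirect pieces rather than by a larger quantity.
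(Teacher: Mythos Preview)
Your representation formula for $\partial H/\partial a$ is correct and algebraically equivalent to the paper's, but as grouped its individual terms do not all carry the right scaling. The problem is the indirect piece $\nabla w\otimes\nabla u$: applying Proposition~\ref{PropCZ}(ii) directly to your equation $\nabla\cdot(a^\star\nabla w + (a^\star-\abar^\star)h)=0$ only gives $\int\langl|\nabla w|^{4r}\rangl^{1/r}\lesssim\int|h|^4$, so this term contributes $(\int|h|^4\int|f|^4)^{1/2}$, which is not dominated by the r.h.s.\ of \eqref{Num:004}. Your instinct to repair this via a ``two-scale expansion on the dual side'' is right in spirit, but the mechanism you sketch (reaching $\nabla^2\vbar$ via \eqref{Defh}) is off: in Proposition~\ref{Propfluct}, $h$ is a general field, not $\nabla\vbar$.

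What actually closes the argument is the decomposition $w = w^\star + \phi_j^\star h_j$ (and analogously $w_i = w_i^\star + \phi_j^\star h_j\partial_i\ubar$), where by \eqref{Id02prime} and \eqref{Id1} one checks that $w^\star$ solves $\nabla\cdot(a^\star\nabla w^\star + (\phi_j^\star a^\star - \sigma_j^\star)\nabla h_j)=0$. This regroups your two-term formula into the paper's three-term form \eqref{DeriveH}: the combination $h+\nabla w$ splits as $h_j(e_j+\nabla\phi_j^\star) + (\nabla w^\star + \phi_j^\star\nabla h_j)$. The first summand (stationary, moment-bounded by \eqref{i}) pairs with the two-scale error $\nabla u - \partial_i\ubar(e_i+\nabla\phi_i)$ to produce the $\int|h|^4\int|\mu_d(|\cdot|)\nabla f|^4$ contribution, while the second summand --- of size $\mu_d(|\cdot|)\nabla h$ by annealed CZ on the $w^\star$-equation together with \eqref{iii} --- pairs with $\nabla u$ to produce $\int|\mu_d(|\cdot|)\nabla h|^4\int|f|^4$. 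This regrouping, not your raw formula, is the content of the paper's Step~1.

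Two smaller points. The estimation step does not use the duality-in-probability device of \eqref{Dual1}--\eqref{Num:714}: here neither tensor factor is stationary, so there is nothing to pull out uniformly in $x$; instead one bounds the $\LL^1(\Boule_1(x))$-average in the carr\'e-du-champs by the pointwise value via Jensen, applies Cauchy--Schwarz in probability and in space to each tensor product, and invokes the non-perturbative Proposition~\ref{PropCZ}(ii) at $p=4$. Consequently no ``mildly perturbative'' restriction on $r$ arises --- Proposition~\ref{PropCZ}(ii) covers the full range, and Jensen then yields all $r\ge 1$.
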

  
  Let us briefly compare Proposition \ref{Propfluct} and  Corollary \ref{Cor1}.
  Both give an estimate on the error in the two-scale expansion on the level of the gradient $\nabla(u-(1+\phi_i\partial_i)\ubar) \simeq \nabla u - \partial_i \ubar (e_i+\nabla \phi_i)$.
  While Corollary \ref{Cor1} gives an estimate in a strong norm (\textit{oscillations}), Proposition \ref{Propfluct}, modulo the homogenization commutator, gives an estimate in a weak norm (\textit{fluctuations}), \textit{i.e.} when tested against a macroscopic $h$.
  
  Proposition \ref{Propfluct} allows to approximate the random variable $G$ defined by \eqref{MacroF} by its \textit{two-scale expansion} $\tilde G$ defined by
  \begin{equation}\label{Def_tildeG}
    \tilde{G}:=\int  \nabla \vbar \cdot \Xi \nabla \ubar.
  \end{equation}
  This is valuable because $\tilde G$ only relies on solving the constant-coefficient problem \eqref{Simple}, the dual constant-coefficient problem \eqref{Defh}, and involves randomness only in form of the standard homogenization commutator \eqref{Def_Xi}, which is independent on the r.~h.~s.~$f$ and the averaging function $g$. In fact, on large scales, $\Xi-\langle \Xi\rangle$ behaves like (tensorial) white noise and  is characterized by the above-mentioned quartic tensor $\QU$.
  As for $\abar$, $\QU$ may be approximated by the representative volume element method \cite[Th.\ 2]{DuerinckxGO_2016}.
  
  \begin{corollary}\label{CoroFluctu}
    Let the ensemble $\langl \cdot \rangl$ be defined as in Section \ref{SecAssumpGauss}.
    Assume that $\hat{f}$ and $\hat{g}$ are smooth and compactly supported, and let $f$ and $g$ be defined by \eqref{Macrohf}.
    Let $\nabla u, \nabla \ubar$, and $\nabla \vbar $ be square-integrable functions satisfying \eqref{Complexe}, \eqref{Simple}, and \eqref{Defh}.
    Let $G$ and $\tilde{G}$ be defined by \eqref{MacroF} and \eqref{Def_tildeG}.
    Then, for $\mu_d$ defined by \eqref{Defmu} and $r \in [1,\infty)$, there holds
    \begin{equation}
      \label{Ineq4}
      \begin{aligned}
        &\epsilon^{-d/2}\langl ( G -\tilde{G} - \langl G-\tilde{G} \rangl  )^{2r} \rangl^{\frac{1}{2r}}
	&\lesssim_{\gamma, r,\hat{f},\hat{g}} \epsilon \mu_d(1/\epsilon).
      \end{aligned}
    \end{equation}
  \end{corollary}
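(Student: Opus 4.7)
The plan is to reduce this to Proposition \ref{Propfluct} applied with the specific choice $h = \nabla \vbar$, and then carefully track the $\epsilon$-scaling when passing to macroscopic coordinates.

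\textbf{Step 1: Reduction to Proposition \ref{Propfluct}.} Using the identity \eqref{Gread}, which gives $G = \int \nabla \vbar \cdot (a-\abar)\nabla u + F$ with the deterministic quantity $F = \int \nabla \vbar \cdot f$, and recalling the definition \eqref{Def_Xi} of $\Xi$ so that
\begin{align*}
\tilde G = \int \nabla \vbar \cdot \Xi \nabla \ubar = \int \nabla \vbar \cdot (a-\abar)\big(e_i+\nabla\phi_i\big)\partial_i\ubar,
\end{align*}
I subtract to obtain
\begin{align*}
G - \tilde G = \int \nabla\vbar \cdot (a-\abar)\big(\nabla u - \partial_i\ubar(e_i+\nabla\phi_i)\big) + F.
\end{align*}
This is exactly the random variable $H$ of \eqref{DefH} with $h := \nabla\vbar$, plus a deterministic term. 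Hence $G - \tilde G - \langl G - \tilde G \rangl = H - \langl H \rangl$, and Proposition \ref{Propfluct} yields
\begin{align*}
\langl (G-\tilde G - \langl G - \tilde G\rangl)^{2r}\rangl^{1/r}
\lesssim \Big( \int |\nabla \vbar|^4 \int |\mu_d(|\cdot|)\nabla f|^4 + \int |f|^4 \int |\mu_d(|\cdot|)\nabla^2\vbar|^4\Big)^{1/2}.
\end{align*}

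\textbf{Step 2: Rescaling.} I next introduce the macroscopic profiles by writing $\ubar(x) = \epsilon^{-1}\hat\ubar(\epsilon x)$ and $\vbar(x) = \epsilon^{d-1}\hat\vbar(\epsilon x)$, where $\hat\ubar$ and $\hat\vbar$ solve the constant-coefficient equations $\hat\nabla\cdot(\abar\hat\nabla\hat\ubar + \hat f) = 0$ and $\hat\nabla\cdot(\abar^\star\hat\nabla\hat\vbar + \hat g) = 0$. The elementary scaling rules $\nabla\vbar(x) = \epsilon^d\hat\nabla\hat\vbar(\epsilon x)$ and $\nabla^2\vbar(x) = \epsilon^{d+1}\hat\nabla^2\hat\vbar(\epsilon x)$ (and similarly $\nabla f(x) = \epsilon\hat\nabla\hat f(\epsilon x)$) then give, after the change of variables $\hat x = \epsilon x$,
\begin{align*}
\int |\nabla\vbar|^4\,dx = \epsilon^{3d}\int|\hat\nabla\hat\vbar|^4 d\hat x, \qquad
\int |\nabla^2\vbar|^4\,dx = \epsilon^{3d+4}\int|\hat\nabla^2\hat\vbar|^4 d\hat x,\\
\int |f|^4\,dx = \epsilon^{-d}\int |\hat f|^4 d\hat x, \qquad
\int|\nabla f|^4\,dx = \epsilon^{4-d}\int|\hat\nabla\hat f|^4 d\hat x.
\end{align*}
For the $\mu_d$-weighted integrals, I use the sub-multiplicative bound $\mu_d(|\hat x|/\epsilon) \lesssim \mu_d(|\hat x|)\,\mu_d(1/\epsilon)$, which is elementary from \eqref{Defmu} in each dimension. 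This produces the extra factor $\mu_d^4(1/\epsilon)$ with all other integrals weighted by $\mu_d^4(|\hat x|)$ against $\hat\nabla\hat f$, respectively $\hat\nabla^2\hat\vbar$.

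\textbf{Step 3: Conclusion.} The two contributions in Proposition \ref{Propfluct} now turn out to have \emph{identical} $\epsilon$-scaling:
\begin{align*}
\int|\nabla\vbar|^4\!\int|\mu_d\nabla f|^4 &\lesssim \epsilon^{3d}\cdot\epsilon^{4-d}\mu_d^4(1/\epsilon) = \epsilon^{2d+4}\mu_d^4(1/\epsilon)\cdot C_{\hat f,\hat g},\\
\int|f|^4\!\int|\mu_d\nabla^2\vbar|^4 &\lesssim \epsilon^{-d}\cdot\epsilon^{3d+4}\mu_d^4(1/\epsilon) = \epsilon^{2d+4}\mu_d^4(1/\epsilon)\cdot C_{\hat f,\hat g},
\end{align*}
where the constants $C_{\hat f,\hat g}$ encode the $\mu_d$-weighted $L^4$-norms of $\hat\nabla\hat f$, $\hat\nabla\hat\vbar$, and $\hat\nabla^2\hat\vbar$. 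Substituting into the bound from Step 1, taking the square root and multiplying by $\epsilon^{-d/2}$ gives the stated estimate \eqref{Ineq4}.

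\textbf{Main technical point.} The only step requiring genuine care is verifying that $C_{\hat f,\hat g}$ is finite. For $\hat f$ and $\hat g$ smooth and compactly supported, standard decay estimates for the constant-coefficient operator $\hat\nabla\cdot\abar^\star\hat\nabla$ give $|\hat\nabla\hat\vbar| \lesssim |\hat x|^{1-d}$ and $|\hat\nabla^2\hat\vbar| \lesssim |\hat x|^{-d}$ at infinity for $d\ge 2$, while for $d=1$ one has $\hat\vbar' = -\hat g/\abar^\star$ compactly supported. In each case, the exponents are precisely such that the $\mu_d^4$-weighted $L^4$ integrals of $\hat\nabla\hat\vbar$ and $\hat\nabla^2\hat\vbar$ converge, and likewise for $\hat\nabla\hat f$. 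This is the only spot where the dimension-dependent $\mu_d$ enters the bookkeeping in an essential way.
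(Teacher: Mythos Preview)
Your proof is correct and follows the same initial reduction as the paper (identifying $G-\tilde G-\langle G-\tilde G\rangle$ with $H-\langle H\rangle$ for $h=\nabla\vbar$ and invoking Proposition~\ref{Propfluct}), but the handling of the $\nabla\vbar$-terms is genuinely different. The paper does not rescale $\vbar$ at all: instead it observes that $\mu_d^4(|\cdot|)$ is an $A_4$ Muckenhoupt weight and applies the weighted CZ estimate for $-\nabla\cdot\abar^\star\nabla$ to pass directly from $\int|\nabla\vbar|^4$ and $\int|\mu_d\nabla^2\vbar|^4$ to the corresponding norms of $g$ and $\nabla g$ (cf.~\eqref{Num:718}), and only then substitutes $g(x)=\epsilon^d\hat g(\epsilon x)$. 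You, by contrast, introduce the macroscopic profile $\hat\vbar$, rescale explicitly, and then verify finiteness of $\int|\hat\nabla\hat\vbar|^4$ and $\int\mu_d^4(|\hat x|)|\hat\nabla^2\hat\vbar|^4$ by hand via Green's-function decay. Your route is more elementary in that it bypasses Muckenhoupt theory entirely, at the cost of a dimension-by-dimension check of the tail integrability; the paper's route is shorter and reuses the same weighted-CZ machinery already invoked in Corollary~\ref{Cor1}, which keeps the argument uniform across the two corollaries.
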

  
  \begin{proof}[Proof of Corollary \ref{CoroFluctu}]
    By \eqref{Gread} and as a consequence of Proposition \ref{Propfluct}, there holds
    \begin{align}
    \begin{aligned}
      &\langl ( G -\tilde{G} - \langl G-\tilde{G} \rangl  )^{2r} \rangl^{\frac{1}{r}}
      \\
      &\qquad \lesssim \Big( \int  | h |^4 \int  \big|\mu_d(|\cdot|)\nabla f\big|^4 + 
      \int  |f|^4\int \big|\mu_d(|\cdot|) \nabla h \big|^4 \Big)^{\frac{1}{2}}
    \end{aligned}
    \label{Ineq3}      
    \end{align}
    for $h:=\nabla \vbar $.
    We recall that, in dimensions $d \geq 2$, $\mu_d^4(|\cdot|)$ are Muckenhoupt weights of class $A_{4}$.
    Therefore, by weighted CZ estimates \cite[Th.\ 7.4.6, p.\ 540]{Grafakos_book}, we also have
    \begin{equation}\label{Num:718}
      \int  | h |^4 \lesssim \int  |g|^4 \et  \int  \big|\mu_d(|\cdot|)\nabla h \big|^4 \lesssim \int  \big|\mu_d(|\cdot|) \nabla g\big|^4.
    \end{equation}
    (A similar estimate also holds in dimension $d=1$.)
    Hence, using the scaling induced by \eqref{Macrohf}, estimate \eqref{Ineq3} yields
    \begin{align*}
      \langl (G -\tilde{G} - \langl G-\tilde{G}\rangl  )^{2r} \rangl^{\frac{1}{r}}
      &\lesssim \Big( \int  | \epsilon^{d} \hat{g}(\epsilon x) |^4 \dd x \int  \big|\mu_d(|x|)\epsilon\hat{\nabla} \hat{f}(\epsilon x)\big|^4 \dd x\Big)^{\frac{1}{2}} 
      \\
      &\quad + \Big( \int  |\hat{f}(\epsilon x)|^4 \dd x \int  \big|\mu_d(|x|) \epsilon^{d+1} \hat{\nabla} \hat{g}(\epsilon x) \big|^4 \dd x\Big)^{\frac{1}{2}},
    \end{align*}
    which implies \eqref{Ineq4} by the change of variables $\hat{x}=\epsilon x$.
  \end{proof}
  
  \begin{proof}[Proof of Proposition \ref{Propfluct}]
    The proof, which has the same ingredients as the proof of Proposition \ref{Propcorr}, is divided into three steps.
    In Step 1, which is reminiscent of Part 1, Step 1 of the proof of Proposition \ref{Propcorr}, we derive the following representation of the Malliavin derivative of $H$:
    \begin{equation}
      \label{DeriveH}
      \begin{aligned}
        \frac{\partial H}{\partial a}
	=~&h_j ( e_j + \nabla \phi^\star_j) \otimes ( \nabla w + \phi_i \nabla \partial_i \ubar)
	\\
	&
	+ ( \nabla w^\star + \phi_j^\star \nabla h_j) \otimes \nabla u
	\\
	&- ( \nabla w_i^\star + \phi^\star_j \nabla ( h_j \partial_i \ubar)) \otimes ( e_i + \nabla \phi_i ),
      \end{aligned}
    \end{equation}
    where $\nabla w$, $\nabla w^\star$, and $\nabla w^\star_j$ are the square-integrable solutions of
    \begin{align}
      \label{w1}
      &\nabla \cdot ( a \nabla w + ( \phi_i a - \sigma_i) \nabla \partial_i \ubar )=0,\\
      \label{w2}
      &\nabla \cdot ( a^\star \nabla w^\star + ( \phi^\star_j a^\star - \sigma_j^\star ) \nabla h_j )=0,\\
      \label{w3}
      &\nabla \cdot ( a^\star \nabla w^\star_j + ( \phi^\star_i a^\star - \sigma^\star_i ) \nabla (h_i \partial_j \ubar ) )=0.
    \end{align}
    (Note that \eqref{w2} and \eqref{w3} define $d+1$ functions $w^\star, w_1^\star, \cdots, w_d^\star$.)
    Step 2 relies on the spectral gap \eqref{Prop21a} and makes use of the annealed estimate in Proposition \ref{PropCZ}(ii) below.
    Finally, in Step 3, we appeal to the correctors estimates to establish \eqref{Num:004}.

    \parag{Step 1:}
      Here comes the argument for \eqref{DeriveH}.
      Given a smooth and compactly supported infinitesimal variation $\delta a$ of $a$, since $h$ is deterministic, we have for the generated infinitesimal variation of $H$ defined by \eqref{DefH}
      \begin{equation}
        \begin{aligned}
	  \delta H
	  =~& \int h \cdot \delta a ( \nabla u - \partial_i \ubar ( e_i + \nabla \phi_i))
	  + \int  h \cdot ( a - \abar) ( \nabla \delta  u -\partial_i \ubar \nabla \delta  \phi_i).
        \end{aligned}
	\label{Diff0}
      \end{equation}
      Since $f$ is deterministic, differentiating the equation \eqref{Complexe} satisfied by $u$ we obtain
      \begin{align}
	\label{Diff1}
	&\nabla \cdot ( a \nabla \delta  u + \delta a \nabla u  )=0.
      \end{align}
      Moreover, we recall that $\nabla \delta \phi_i$ satisfies \eqref{DefDeltaphi}.
      Next, we note, and will prove below, that $w^\star$ and $w^\star_j$ defined by \eqref{w2} and \eqref{w3} satisfy 
      \begin{align}
	\label{w21}
	&\nabla \cdot \big( a^\star \nabla ( w^\star + \phi_j^\star h_j ) 
	+ (a^\star- \abar^\star) h \big)=0,
	\\
	\label{w31}
	&\nabla\cdot \big( a^\star \nabla ( w^\star_j + \phi_i^\star h_i \partial_j \ubar ) + \partial_j \ubar ( a^\star- \abar^\star) h \big)=0.
      \end{align}
      This allows us to rewrite the r.~h.~s.\ of \eqref{Diff0} as
      \begin{equation}
        \begin{aligned}
	  \int  h \cdot ( a - \abar) \nabla \delta  u
	  &\overset{\eqref{w21}}{=}-\int \nabla ( w^\star + \phi_j^\star h_j )\cdot a \nabla \delta  u
	  \\
	  &\overset{\eqref{Diff1}}{=} \int \nabla ( w^\star + \phi_j^\star h_j ) \cdot \delta a \nabla u,
        \end{aligned}
	\label{Diff4}
      \end{equation}
      and, similarly,
      \begin{equation}
        \begin{aligned}
	  -\int \partial_i \ubar h \cdot ( a - \abar) \nabla \delta  \phi_i
	  &\overset{\eqref{w31}}{=}\int \nabla ( w^\star_i + \phi_j^\star h_j \partial_i \ubar ) \cdot a \nabla \delta  \phi_i
	  \\
	  &\overset{\eqref{DefDeltaphi}}{=} -\int \nabla (w_i^\star +  \phi_j^\star h_j \partial_i \ubar )\cdot  \delta a ( e_i + \nabla \phi_i).
        \end{aligned}
	\label{Diff5}
      \end{equation}
      Inserting \eqref{Diff4} and \eqref{Diff5} into \eqref{Diff0}, using Leibniz' rule, and reordering the terms yields
      \begin{equation}
	\label{Diff6}
        \begin{aligned}
          \delta H
	  =~&
	  \int h_j (e_j + \nabla \phi_j^\star )\cdot \delta a ( \nabla u - \partial_i \ubar ( e_i + \nabla \phi_i))
	  \\
	  &+\int ( \nabla w^\star + \phi_j^\star \nabla h_j )  \cdot \delta a \nabla u
	  \\
	  &-\int \big(\nabla w_i^\star +  \phi_j^\star  \nabla (h_j \partial_i \ubar) \big) \cdot \delta a ( e_i + \nabla \phi_i).
        \end{aligned}
      \end{equation}
      Comparing \eqref{w1} with \eqref{12} yields by uniqueness (and Leibniz' rule)
      \begin{equation*}
	\nabla w + \phi_i \nabla \partial_i \ubar=
	\nabla u - \partial_i \ubar ( e_i + \nabla \phi_i).
      \end{equation*}
      Inserting this information into the first right-hand term of \eqref{Diff6} yields the desired identity \eqref{DeriveH}.
      
      We finally argue that $w^\star$ and $w^\star_j$ satisfy \eqref{w21} and \eqref{w31}, respectively.
      We consider the l.~h.~s.\ of \eqref{w21}, in which we replace the first term by means of \eqref{w2}, and then use \eqref{Id1} (for $\sigma^\star_j$):
      \begin{equation*}
        \begin{aligned}
          &\nabla \cdot \big( a^\star \nabla  ( w^\star + \phi_j^\star h_j ) + (a^\star- \abar^\star) h \big)
          \\
	  &\qquad\overset{\eqref{w2}}{=}
	  \nabla \cdot \big( \sigma^\star_j \nabla h_j + h_j a^\star \nabla \phi_j^\star + ( a^\star - \abar^\star) h \big)
	  \\
	  &\qquad\overset{\eqref{Id1}}{=} \nabla \cdot \big( h_j ( a^\star ( e_j + \nabla \phi^\star_j)- \abar^\star e_j -\nabla \cdot \sigma^\star_j ) \big)
	  \overset{\eqref{Id02prime}}{=}0.
        \end{aligned}
      \end{equation*}
      Similarly, we obtain \eqref{w31} by replacing $w^\star$ by $w^\star_j$ and $h$ by $\partial_j \ubar h$ in the above manipulations.
    
    \parag{Step 2:}
      We establish the following estimate
      \begin{equation}
      \begin{aligned}
        \langl (H- \langl H \rangl)^{2r}\rangl^{\frac{1}{r}}
        \lesssim~&
	\Big(\int h_j^4 \langl | e_j + \nabla \phi^\star_j |^{4r} \rangl^{\frac{1}{r}}
	\int \langl|(\phi, \sigma)|^{8 r}\rangl^{\frac{1}{2r}} |\nabla^2 \ubar|^4\Big)^{\frac{1}{2}}
	\\
	&+\Big(\int  \langl|(\phi^\star, \sigma^\star) |^{8r}\rangl^{\frac{1}{2r}} |\nabla h|^4 \int |f|^{4}\Big)^{\frac{1}{2}}
	\\
	&+\int  \langl|(\phi^\star,\sigma^\star)|^{8r}\rangl^{\frac{1}{4r}}  |  \nabla(\partial_i \ubar h) |^2\langl |e_i + \nabla \phi_i|^{4r} \rangl^{\frac{1}{2r}}.
      \end{aligned}
      \label{Num:006}
      \end{equation}
      Indeed, by an application of the spectral gap \eqref{Prop21a} into which we insert \eqref{DeriveH}, and making use of Jensen's inequality, we obtain
      \begin{align*}
        \langl (H- \langl H \rangl)^{2r}\rangl^{\frac{1}{r}}
        \lesssim~& \int  \Big\langl\big| \frac{\partial H}{\partial a(x)}\big|^{2r}\Big\rangl^{\frac{1}{r}} \dd x
	\\
	\lesssim~&
	\int  \big\langl | h_j ( e_j + \nabla \phi^\star_j) \otimes ( \nabla w + \phi_i \nabla \partial_i \ubar)|^{2r} \big\rangl^{\frac{1}{r}}
	\\
	&+\int  \big\langl| ( \nabla w^\star + \phi_j^\star \nabla h_j) \otimes \nabla u|^{2r} \big\rangl^{\frac{1}{r}}
	\\
	&+\int  \big\langl| ( \nabla w_i^\star + \phi^\star_j \nabla ( h_j \partial_i \ubar)) \otimes ( e_i + \nabla \phi_i )|^{2r}\big\rangl^{\frac{1}{r}}.
      \end{align*}
      Let us focus on the first above right-hand term. 
      By the Cauchy-Schwarz inequality in probability and in space, this yields
      \begin{align*}
	&\int  \langl | h_j ( e_j + \nabla \phi^\star_j) \otimes ( \nabla w + \phi_i \nabla \partial_i \ubar)|^{2r} \rangl^{\frac{1}{r}}
	\\
        &\qquad \leq \Big(\int \langl | h_j ( e_j + \nabla \phi^\star_j)|^{4r}\rangl^{\frac{1}{r}} \Big)^{\frac{1}{2}}
        \Big( \int\langl | \nabla w + \phi_i \nabla \partial_i \ubar|^{4r}\rangl^{\frac{1}{r}}\Big)^{\frac{1}{2}}.
      \end{align*}
      By similar manipulations and the stationarity of $\nabla \phi_i$, we get
      \begin{equation}
      \begin{aligned}
        \langl (H- \langl H \rangl)^{2r}\rangl^{\frac{1}{r}}
	& \lesssim
	\Big(\int \langl | h_j ( e_j + \nabla \phi^\star_j) |^{4r} \rangl^{\frac{1}{r}}\Big)^{\frac{1}{2}}
	\Big(\int \langl | \nabla w + \phi_i \nabla \partial_i \ubar|^{4r} \rangl^{\frac{1}{r}}\Big)^{\frac{1}{2}}
	\\
	&~~~~+\Big(\int \langl | \nabla w^\star + \phi_j^\star \nabla h_j |^{4r} \rangl^{\frac{1}{r}}\Big)^{\frac{1}{2}}
	\Big(\int \langl | \nabla u|^{4r} \rangl^{\frac{1}{r}}\Big)^{\frac{1}{2}}
	\\
	&~~~~+\int  \langl |\nabla w_i^\star + \phi^\star_j \nabla ( h_j \partial_i \ubar)|^{4r} \rangl^{\frac{1}{2r}}\langl |e_i + \nabla \phi_i|^{4r} \rangl^{\frac{1}{2r}}.
      \end{aligned}
      \label{Num:005}
      \end{equation}
      We now invoke the annealed estimate \eqref{Estim13_ter} from Proposition \ref{PropCZ}(ii) below with $r' \rightsquigarrow 4r$, $r\rightsquigarrow 8r$, and $p \rightsquigarrow 4$ on $\nabla \cdot ( a \nabla u + f )=0$, obtaining, since $f$ is deterministic,
      \begin{align*}
	\int \langl | \nabla u|^{4r} \rangl^{\frac{1}{r}}
	\lesssim \int  \langl |f|^{8r} \rangl^{\frac{1}{2r}} =\int |f|^{4}.
      \end{align*}
      Similarly, based on \eqref{w1}, \eqref{w2}, and \eqref{w3},  we have that
      \begin{align*}
	\int \langl | \nabla w |^{4r} \rangl^{\frac{1}{r}}
	\lesssim~&
        \int  \langl|(\phi,\sigma)|^{8 r}\rangl^{\frac{1}{2r}} |\nabla^2 \ubar|^4,
        \\
	\int \langl|\nabla w^\star|^{4r}\rangl^{\frac{1}{r}}
	\lesssim~& \int  \langl|(\phi^\star, \sigma^\star) |^{8r}\rangl^{\frac{1}{2r}} |\nabla h|^4,
        \\
	\int \langl |\nabla w^\star_i|^{4r} \rangl^{\frac{1}{2r}} 
	\lesssim~& \int  \langl|(\phi^\star,\sigma^\star)|^{8r}\rangl^{\frac{1}{4r}}  |  \nabla(\partial_i \ubar h) |^2.
      \end{align*}
      Inserting these four estimates into \eqref{Num:005} (and using Jensen's inequality in probability) entails \eqref{Num:006}.
      
    \parag{Step 3: Conclusion}
      By the corrector estimates \eqref{i} and \eqref{iii}, and Leibniz' rule followed by the Cauchy-Schwarz inequality in the last term, \eqref{Num:006} turns into
      \begin{equation}
      \begin{aligned}
        &\langl |H- \langl H \rangl|^{2r}\rangl^{\frac{1}{r}}
	\\
	&\qquad \lesssim
	\Big(\int |h|^4 \int \big|\mu_d(|\cdot|) \nabla^2 \ubar\big|^4\Big)^{\frac{1}{2}}
	+\Big(\int  \big|\mu_d(|\cdot|) \nabla h\big|^4 \int |(f,\nabla \ubar)|^{4}\Big)^{\frac{1}{2}}.
      \end{aligned}
      \label{Num:007}
      \end{equation}
      As for \eqref{Num:718}, we have
      \begin{align*}
        \int   |\nabla \ubar|^4 \lesssim \int  |f|^4 \et \int  \big|\mu_d(|\cdot|) \nabla^2 \ubar\big|^4 \lesssim \int  \big|\mu_d(|\cdot|) \nabla f\big|^4.
      \end{align*}
      Inserting these estimates into \eqref{Num:007} yields \eqref{Num:004}.
    \end{proof}

\section{Our tool: Annealed CZ estimates}\label{SecAnnealed}

  \subsection{General statement, and proof of the perturbative CZ estimates}
  The main contribution of this paper is a novel proof of the following result from \cite[Th.\ 6.1]{DuerinckxOtto_2019}:
  \begin{proposition}\label{PropCZ}
    Let the random fields $\nabla u$ and $f$ be square-integrable and related by
    \begin{align}\label{Prop3Defv}
      \nabla \cdot ( a \nabla u + f ) = 0.
    \end{align}
    Then:
    
    (i) {For any ensemble $\langl \cdot \rangl$ of elliptic coefficient fields $a$ satisfying \eqref{Ellipticite}, there holds
      \begin{align}
        \label{Prop3E1}
        \int  \langl |\nabla u|^r \rangl^{\frac{2}{r}}
        \lesssim_{d,\lambda}
        \int  \langl |f|^r \rangl^{\frac{2}{r}}, \qquad \text{ provided } |r-2| \ll_{d,\lambda} 1.
      \end{align}
      }
      
    (ii) {For any ensemble $\langl \cdot\rangl$ as in Section \ref{SecAssumpGauss}, there holds
      \begin{align}\label{Estim13_ter}
	\int \langl|\nabla u|^{r'}\rangl^\frac{p}{r'}
	\lesssim_{\gamma, p, r',r}
	\int \langl|f|^{r}\rangl^\frac{p}{r} \pourtout 1\leq r'<r\leq \infty \text{ and } 1 < p<\infty.
      \end{align}
	}
  \end{proposition}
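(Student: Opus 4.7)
The two parts are treated separately, with Part~(ii) the main novelty.

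\emph{Part (i).} For $r=2$ the estimate is the expectation of the deterministic energy bound obtained by testing \eqref{Prop3Defv} against $u$ and using \eqref{Ellipticite}. For $|r-2|\ll 1$, I appeal to Meyers' perturbative Calder\'on-Zygmund argument in a vector-valued form: since $-\nabla\cdot a\nabla$ acts pointwise in $\omega$, equation \eqref{Prop3Defv} can be viewed as a Bochner-valued elliptic equation with target $L^r_\omega$, a UMD space for $r\in(1,\infty)$. Meyers' reverse-H\"older/perturbation argument is kernel-free and transfers to this setting, yielding boundedness of $\Pi_a:=\nabla(-\nabla\cdot a\nabla)^{-1}\nabla\cdot$ from $L^2_x(L^r_\omega)$ to itself for $r$ in a small neighborhood of $2$ depending only on $d$ and $\lambda$. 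Equivalently, scalar Meyers applied pointwise in $\omega$ gives $L^r_{x,\omega}$-boundedness, which combined with the $L^2_{x,\omega}$-bound (energy) via real interpolation of mixed-norm spaces yields the desired $L^2_x(L^r_\omega)$ bound.

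\emph{Part (ii).} The strategy is to show that $\Pi_a$ is close, in the mixed-norm operator norm on $L^p_x(L^{r'}_\omega)$, to the homogenized projection $\Pi_{\abar}$. Since $\Pi_{\abar}$ is a deterministic translation-invariant convolution operator with a Calder\'on-Zygmund kernel, vector-valued CZ theory gives its boundedness on $L^p_x(L^q_\omega)$ for all $p,q\in(1,\infty)$ (with endpoint cases handled by Fubini and Minkowski). The perturbation $\Pi_a-\Pi_{\abar}$ is controlled through the two-scale expansion identity \eqref{12}: if $\ubar$ solves the homogenized equation with right-hand side $f$, then $\nabla(u-(1+\phi_i\partial_i)\ubar)=\Pi_a\big((\phi_ia-\sigma_i)\nabla\partial_i\ubar\big)$, so the error is an instance of $\Pi_a$ applied to a remainder involving the extended correctors $(\phi,\sigma)$ and $\nabla^2\ubar$ (itself controlled by $\nabla f$ via classical CZ for $\Pi_{\abar}$). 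The moment estimates of Proposition~\ref{Propcorr} then feed in the needed control on $(\phi,\sigma)$; notice that only the perturbative part~(i) is used in that proposition, so no circularity arises.

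The principal obstacle is the combination of two limitations in Proposition~\ref{Propcorr}: the correctors are controlled only up to the sublinear modulus $\mu_d$, and the $L^r_\omega$-form of the spectral gap forces a loss of stochastic integrability. A naive iteration cannot close, because each application of the perturbation estimate drops the exponent $r$ while $(\phi,\sigma)$ contributes unbounded (albeit sublinear) growth. To resolve this, I work with the massive operator $T^{-1}-\nabla\cdot a\nabla$ (the subject of Proposition~\ref{PropmassCZ}), whose Green function is localized on scale $\sqrt T$: the homogenized solution $\ubar$ is then effectively concentrated on this scale, and the derivative in $(\phi_ia-\sigma_i)\nabla\partial_i\ubar$ produces a factor of order $\sqrt T^{-1}$, yielding operator-norm smallness in $\sqrt T$. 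A real-interpolation (K-functional) argument then trades this smallness against the integrability loss $r\to r'$, and a dyadic-in-$T$ decomposition of the right-hand side closes the buckling across all spatial scales. Orchestrating this interpolation step, so that the gain in $\sqrt T$ precisely compensates the integrability loss at every dyadic level, is the technical heart of the proof.
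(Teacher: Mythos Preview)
Your Part~(i) is essentially the paper's argument: vector-valued Meyers via the UMD property of $L^r_{\langle\cdot\rangle}$, exactly as done there. Your parenthetical ``alternative'' is incorrect, though: real interpolation between $L^2_{x,\omega}$ and $L^r_{x,\omega}$ yields only diagonal spaces $L^s_{x,\omega}$, not the off-diagonal $L^2_x(L^r_\omega)$ you need. Drop that sentence; the main line already suffices.

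For Part~(ii) your philosophy matches the paper's, including the use of the massive operator and a real-interpolation/$K$-functional step to trade the $\sqrt{T}$-smallness against the stochastic integrability loss. But two genuine ingredients are missing from your plan. First, you invoke the massless identity~\eqref{12} and the massless correctors of Proposition~\ref{Propcorr}; this forces $(\phi,\sigma)$ to be stationary and hence restricts you to $d>2$. The paper avoids this by carrying out the two-scale expansion on the massive equation~\eqref{Ann_ae4} with the \emph{massive} correctors $(\phi_\tau,\sigma_\tau,\psi_\tau)$ (Lemma~\ref{LemMassCorr}), which are automatically stationary in all dimensions and produce the error equation~\eqref{Ann_ae7}. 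Second, your buckling needs an independent \emph{a priori} input: the paper supplies the suboptimal bound $C_{p,r',r}(T)\lesssim\sqrt{T}^{\,d}$ (Lemma~\ref{Lem_SsOpti}), derived from the exponentially weighted energy estimate~\eqref{Ann_ap34} and local regularity. Without such a starting estimate the real-interpolation step has nothing to act on, and your ``dyadic-in-$T$ decomposition of the right-hand side'' does not provide it.

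Finally, the closing mechanism is not a dyadic decomposition but an iteration alternating real and \emph{complex} interpolation: the real step upgrades a suboptimal growth $C\lesssim\sqrt{\tau}^{\theta/2}$ to a uniform bound at slightly worse exponents, and the complex step (interpolating the uniform bound against the suboptimal $\sqrt{\tau}^{\,d}$) restores a suboptimal bound at better exponents; finitely many such rounds reach any admissible $(p,r',r)$. Your sketch should make this structure explicit.
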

  
  \parag{Remark}
    Note that the ensemble occurs in the \textit{inner} part of the norms in Proposition \ref{PropCZ}.
    In other words, the estimates are not about stochastic moments of the constant in a quenched CZ estimate, \textit{i.e.} in norms $\LL^r_{\langl\cdot\rangl}(\LL_{\R^d}^p)$, but rather about \textit{vector-valued} CZ estimates, \textit{i.e.} in norms $\LL_{\R^d}^p(\LL^r_{\langl\cdot\rangl})$ (in the terminology of \cite{McConnell_1984}, where the vector is represented by the values with respect to the ensemble).
    In particular, even in the case when $a$ is a deterministic constant coefficient, estimate \eqref{Prop3E1} is not trivial and requires refined tools (which however are well-established, see the proof of  Proposition \ref{PropCZ}(i) below).
    
    Proposition \ref{PropCZ}(i) holds under very general assumptions and rests only on the perturbative Meyers' approach to lift the above-mentioned corresponding statement for constant coefficients.
    On the contrary, Proposition \ref{PropCZ}(ii) requires homogenization techniques.
    Namely, we follow the basic idea of lifting the constant-coefficient regularity theory for the Helmholtz projection (see Section \ref{SS_AnnealedCZ} for a discussion).
    
    Proposition \ref{PropCZ}(ii) holds at the price of a loss of stochastic integrability.
    This loss is unavoidable for $p$ or $r$ far from $2$.
    This is not only due to the randomness of the local regularity of $a$, but more importantly, it is also a consequence of the randomness on large scales.
    To explain this, we fix  $|p-2| \gg_{d,\lambda} 1$ and consider the simpler framework of $\R^d$ replaced by the lattice $\Z^d$ (see \cite{Gloria_Otto}).
    The random coefficient field $a$ now lives on the edges of the lattice $\Z^d$ and takes two values according to i.~i.~d. Bernoulli variables.
    Then, it is easily seen that any given configuration on a domain of finite size may be found with positive probability.
    Choosing a configuration with an arbitrarily large constant $C$ in the CZ estimate%
    \footnote{Even for $d=2$, such a configuration may be obtained by using the singularities induced by coefficients piecewise constant on sectors.
    In such a case (see \cite{Kellogg_1972}), there exist $a$-harmonic functions $u$ such that $\nabla u \in \LL^2(\R^d)$ but $\nabla u \notin \LL^p(\R^d)$ for $p \gg_\lambda 1$.
    These counterexamples are available for the continuum framework and their large-scale behavior transmits to the discrete setting by interpreting it as a discretization of the continuum one.
    The latter argument is in the spirit of \cite[Prop.\ 22]{Fischer_Otto_2015}.}, that is
    \begin{equation*}
      \int |\nabla u|^p \geq C \int |f|^p
    \end{equation*}
    for some suitable $f$, we may build a random r.~h.~s.\ $f$ such that
    \begin{align*}
      \int \langle  |\nabla u|^p \rangle \geq C \quad \text{ but }\int \langle  |f|^p \rangle = 1,
    \end{align*}
    so that \eqref{Estim13_ter} cannot hold for $r'=r=p$.
    
  \begin{proof}[Argument for Proposition \ref{PropCZ}(i)]
  The main ingredient is the following: If $a=\Id$, then Proposition \ref{PropCZ}(i) is satisfied (here, we denote by $\Id$ the identity matrix). 
  A perturbation argument \textit{\`a la Meyers} yields the desired result for general elliptic coefficient field $a$ (see \textit{e.g.} \cite{Meyers_Estim} and a more recent presentation in \cite[Chap.\ 2, Th.\ 2.6.2, p.\ 122]{MuellerI} for the proof of the Meyers estimate).

    By \cite[Th.\ 1.1.]{McConnell_1984}, if $a=\Id$, then \eqref{Prop3E1} holds for all $r \in (1,\infty)$ since the space $\LL^{r}_{\langl \cdot\rangl}$ is UMD by \cite[Prop.\ 4.2.15, p.\ 291]{Weis_Book}.
    In this special case, we denote by $C_{r}$ the best constant in
    \begin{align*}
      \Big( \int  \langl |\nabla u|^r \rangl^{\frac{2}{r}} \Big)^{\frac{1}{2}} 
      \leq C_{r} \Big( \int  \langl |f|^r \rangl^{\frac{2}{r}} \Big)^{\frac{1}{2}}.
    \end{align*}
    We now rewrite \eqref{Prop3Defv} as
    \begin{align}\label{Truc}
      \nabla u = \nabla \Delta^{-1} \nabla \cdot ( \Id - a ) \nabla u - \nabla \Delta^{-1} \nabla \cdot f.
    \end{align}
    By \eqref{Ellipticite}, we have in the sense of operator norm on $\R^d$
    \begin{align*}
      |\Id-a| \leq \sqrt{1-\lambda}.
    \end{align*}
    Indeed, for all $\xi \in \R^d$, there holds
    \begin{equation*}
      |(\Id-a)\xi|^2 = \xi^2 + |a\xi|^2 -2 \xi \cdot a \xi \overset{\eqref{Ellipticite}}{\leq} \xi^2 - \xi \cdot a \xi \overset{\eqref{Ellipticite}}{\leq} (1-\lambda) |\xi|^2.
    \end{equation*}
    Therefore, the operator $\nabla \Delta^{-1} \nabla \cdot ( \Id - a )$ appearing in \eqref{Truc} is a contraction in $\LL_{\R^d}^{2}\LL^r_{\langl \cdot \rangl}$ provided
    \begin{align}\label{Condition}
      C_{r} \sqrt{1-\lambda} <1.
    \end{align}
    By the energy estimate we already know that $C_{2}=1$. 
    Fix an exponent $\bar{r} \in (2,\infty)$; by complex interpolation between spaces $\LL_{\R^d}^{2}\LL^r_{\langl \cdot \rangl}$, we obtain that for $1/r=(1-\theta)/2 + \theta/\bar{r}$ there holds:
    \begin{align*}
      C_{r} \leq C_{2}^{1-\theta} C_{\bar{r}}^{\theta} = C_{\bar{r}}^{\theta}, \quad \text{so that}\; \limsup_{r \downarrow 2} C_{r} \leq 1.
    \end{align*}
    Thus, if $|r-2| \ll_{d,\lambda} 1$, then \eqref{Condition} is satisfied, so that the operator $\nabla \Delta^{-1} \nabla \cdot ( \Id - a )$ appearing in \eqref{Truc} is indeed a contraction on $\LL_{\R^d}^{2}\LL^r_{\langl \cdot \rangl}$.
    This shows that the operator $\nabla ( \nabla \cdot a \nabla )^{-1} \nabla \cdot$ is bounded in $\LL_{\R^d}^{2}\LL^r_{\langl \cdot \rangl}$, which amounts to \eqref{Prop3E1}.
  \end{proof}

  \subsection{Strategy for proving the non-perturbative CZ estimates: massive equation and correctors}\label{SS_AnnealedCZ}
  
  The proof of Proposition 7.1 (ii) follows the philosophy of \cite{AvellanedaLin} in the sense that we appeal to homogenization to deduce the boundedness of the $a$-Helmholtz projection from that of the $\abar$-Helmholtz projection. Smallness of the homogenization error necessarily requires some amount of regularity of the r.~h.~s.\ $f$, see for instance Corollary \ref{Cor1}. Since for the operator norm, we have to consider arbitrary $f$, this suggests to decompose $f$ into a fairly smooth or large-scale part $f_<$ (we use the language of \textit{low-pass}) and an oscillatory small scale part $f_>$ (\textit{high-pass}); the corresponding $\nabla u_>$ then requires an independent argument. The latter argument is based on locality, meaning that $\nabla u_>(x)$  depends on $f$ only in a (large) ball around $x$.

  Given a length scale, such a decomposition into a smooth part and a local part is provided by the semi-group, which here we substitute by the resolvent ${\frac{1}{\tau}-\nabla\cdot a\nabla}$, which is called the \textit{massive} version of the elliptic operator. The notation is motivated by the interpretation of $\frac{1}{\tau}-\nabla\cdot a\nabla$ as the generator of a diffusion coupled to desorption at exponential rate $\frac{1}{\tau}$, and the language comes from quantum field theory where this term is well-known to provide an infra-red cut-off beyond length scales $\sqrt{\tau}$. More precisely, we consider $u_>:=(\frac{1}{\tau}-\nabla\cdot a\nabla)^{-1}\nabla\cdot f$, which up to exponentially small tails has the desired locality properties on scale $\sqrt{\tau}$ (as can be guessed from the form of the fundamental solution of $\frac{1}{\tau}-\triangle$). 

  As mentioned, we shall apply homogenization to $u_<:=u-u_>$. We depart from the general strategy in \cite{AvellanedaLin} by using a whole-space argument (as opposed to arguing on dyadically increasing balls typical for a Campanato iteration). Since we will appeal to the divergence-form representation of the residuum in the error of the two-scale expansion, \textit{cf.}~\eqref{12}, this whole-space argument would require the correctors $(\phi,\sigma)$ to be stationary, which would impose the restriction $d>2$. (This easier argument will be carried out in \cite{JosienOtto_2020}.) 
  Here, we avoid this restriction by equivalently characterizing the low-pass part as the solution of the \textit{massive} equation $u_<=(\frac{1}{\tau}-\nabla\cdot a\nabla)^{-1}\frac{1}{\tau}u$, so that when comparing $u_<$ to its homogenized counterpart $\bar u$  $:=(\frac{1}{\tau}-\nabla\cdot \abar_\tau\nabla)^{-1}\frac{1}{\tau}u$, the analogue of \eqref{12} will involve the \textit{massive} correctors (and the corresponding homogenized coefficient $\abar_\tau$ defined by \eqref{Def_qT} below), see Section \ref{Sec_MassCorr}. 
  The massive correctors are trivially stationary and satisfy estimates analogous to the  standard correctors, see Lemma~\ref{LemMassCorr}. 

  A ``collateral damage'' of carrying out the homogenization on the level of $u_<=(\frac{1}{\tau}-\nabla\cdot a\nabla)^{-1}\frac{1}{\tau}u$ (instead of $u_<=(-\nabla\cdot a\nabla)^{-1}\frac{1}{\tau}u_>$), leading to $\bar u$ $=(\frac{1}{\tau}-\nabla\cdot \abar_\tau\nabla)^{-1}\frac{1}{\tau}u$ is the following: We need to split also $\bar u$ into  $\bar u_>$ $:=(-\nabla\cdot \abar_\tau\nabla)^{-1}\frac{1}{\tau}u_>$ coming from the local part, and the remainder $\bar u_<:=\bar u-\bar u_>$, which again is expected to be small by homogenization, \textit{cf.} \eqref{Ann_ae10} and \eqref{Ann_ae11}.

  In view of this crucial role of the massive operator, it is convenient to unfold our task \eqref{Estim13_ter} by establishing the boundedness of the massive $a$-Helmholtz projection $\nabla(\frac{1}{\tau}-\nabla\cdot a\nabla)^{-1}\nabla\cdot$, and to include the zero-order term on the l.~h.~s.\ into the estimate, as well as to allow for a non-divergence form r.~h.~s., \textit{cf.}~\eqref{Ann_ae1}. This comes with the notational disadvantage that we need to monitor \textit{two} cut-off parameters $T\ge\tau$.
  We recover the massless CZ estimate \eqref{Estim13_ter} as the limit of its massive counterpart \eqref{Ann_ee5} (along with \eqref{Borne_C}).

  \subsubsection{The massive equation}
    We consider a massive version of \eqref{Prop3Defv}
      \begin{align}\label{Ann_ae1}
	\frac{1}{T}u-\nabla\cdot a\nabla u=\frac{1}{T}g + \nabla\cdot f
      \end{align}
      for a parameter $T\geq 1$, which we think of as being large.
      As mentioned above, our strategy is to derive annealed estimates for \eqref{Ann_ae1} (see Proposition \ref{PropmassCZ}).
      As explained above, we split $u$ as follows:
      \begin{align}
	\label{Ann_ae3}
	u=u_>+u_<,
      \end{align}
      where, for some $\tau \leq T$, $u_>$ is the high-pass part defined by
      \begin{equation}
        \label{Def_u_tau}
        \frac{1}{\tau}u_>-\nabla\cdot a\nabla u_>=\frac{1}{T}g + \nabla\cdot f,
      \end{equation} 
      and where the low-pass part $u_<$ satisfies
      \begin{align}
	\label{Ann_ae4}
	\frac{1}{\tau} u_< - \nabla \cdot a\nabla u_<= \big(\frac{1}{\tau}-\frac{1}{T}\big)u.
      \end{align}
      We approximate the low-pass part $u_<$ by the solution of the homogenized equation
      \begin{align}
	\label{Ann_ae5}
	\frac{1}{\tau} \ubar  - \nabla \cdot \abar_\tau \nabla \ubar = \big(\frac{1}{\tau}-\frac{1}{T}\big)u
      \end{align}
      for $\abar_\tau$ defined by \eqref{Def_qT} below.
      However, as mentioned above, on the homogenized level of $\ubar$, it turns out that we need to split once more into the high-pass and the low-pass parts:
      \begin{equation}
        \label{Ann_ae3_bis}
	\ubar
	=\ubar_>+\ubar_<,
      \end{equation} 
      where $\ubar_>$ is the high-pass part defined by
      \begin{equation}
        \label{Ann_ae10_bis}
	\frac{1}{T} \ubar_> - \nabla \cdot \abar_\tau \nabla \ubar_>
	=\big(\frac{1}{\tau}-\frac{1}{T}\big)u_>,
      \end{equation} 
      and where the low-pass part $\ubar_<$ satisfies
      \begin{equation}
        \label{Ann_ae11_bis}
	\frac{1}{T} \ubar_< - \nabla \cdot \abar_\tau \nabla \ubar_<
	=\big(\frac{1}{\tau}-\frac{1}{T}\big)(u_<-\ubar ).
      \end{equation} 
  
  \subsubsection{The massive correctors}\label{Sec_MassCorr}
    Expressing the homogenization error when passing from \eqref{Ann_ae4} to \eqref{Ann_ae5} requires the use of the \textit{massive} extended correctors $(\phi_\tau,\sigma_\tau,\psi_\tau)$ (see \cite[(48), (50) \& (51)]{GloriaOtto_2015} and also \cite{GloriaOtto_2017_Corr}).
      These objects are stationary solutions to the upcoming equations:
      Generalizing \eqref{Id04} and \eqref{Id03}, the massive correctors $\phi_{\tau,i}$ are defined through
      \begin{align}\label{Ann_as12}
	\frac{1}{\tau}\phi_{\tau,i}-\nabla\cdot a(\nabla\phi_{\tau,i}+e_i)=0,
      \end{align}
      and the massive flux correctors $\sigma_{\tau,i}=\{\sigma_{\tau,ijk}\}_{j, k=1,\cdots,d}$ are skew symmetric tensor fields given by
      \begin{equation}\label{Ann_as12s}
	\big(\frac{1}{\tau}-\Delta\big)\sigma_{\tau,ijk}=\partial_j q_{\tau,ik}-\partial_k q_{\tau,ij},
      \end{equation}
      where
      \begin{equation}\label{Def_qT}
        q_{\tau,ij}:=e_j\cdot a(e_i+\nabla\phi_{\tau,i}) \et \quad \abar_\tau e_i:=\langl q_{\tau,i}\rangl.
      \end{equation}
      The vector field $\psi_{\tau,i}$, which has no analogue in the case of $\tau=\infty$, is defined through
      \begin{align}\label{Ann_as11}
	\big(\frac{1}{\tau}-\Delta\big)\psi_{\tau,i}=q_{\tau,i}-\abar_\tau e_i-\nabla\phi_{\tau,i}.
      \end{align}
      The merit of $\psi_{\tau,i}$ is that the massive extended correctors $(\phi_\tau,\sigma_\tau,\psi_\tau)$ together satisfy the following generalization of \eqref{Num:1}:
      \begin{align}\label{Ann_as10}
	a(e_i+\nabla\phi_{\tau,i})=q_{\tau,i}=\abar_\tau e_i+\nabla\cdot\sigma_{\tau,i}+\frac{1}{\tau}\psi_{\tau,i}.
      \end{align}
      Note that each of the equations \eqref{Ann_as12}, \eqref{Ann_as12s} and \eqref{Ann_as11} has a unique solution in the class of bounded fields, which is thus stationary (see \textit{e.g.} \cite[Lem.\ 2.7]{GloriaOtto_2017_Corr}).
      
      The massive correctors enjoy properties similar to their massless counterparts (see Proposition \ref{Propcorr}):
      \begin{lemma}\label{LemMassCorr}
	Let $T \geq 1$.
	Under the assumptions of Section \ref{SecAssumpGauss}, for any $r \in [1, \infty)$, there holds:
	\begin{align}\label{Ann_ap56}
	  \Big\langl\big|\big(\nabla\phi_T,\nabla\sigma_T,\frac{\nabla\psi_T}{\sqrt{T}}\big)\big|^{2r}\Big\rangl^\frac{1}{r}
	  &\lesssim_{\gamma,r} 1,
	\\
	\label{Ann_ap57}
	\Big\langl\big|\big(\phi_T,\sigma_T,\frac{\psi_T}{\sqrt{T}}\big)\big|^{2r}\Big\rangl^\frac{1}{r}
	&\lesssim_{\gamma,r}
	\mu^2_d(\sqrt{T}),
	\end{align}
	where $\mu_d$ is defined by \eqref{Defmu}.
      \end{lemma}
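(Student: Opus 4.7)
The plan is to parallel the proof of Proposition \ref{Propcorr}, with two simplifications stemming from the fact that the massive corrector $\phi_T$ itself (not just its gradient) is a bounded stationary solution of \eqref{Ann_as12}, and with one genuinely new ingredient required to extract the sharp $\mu_d(\sqrt T)$ control of $\phi_T$, $\sigma_T$ and $\psi_T/\sqrt{T}$.

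First, I would derive the Malliavin representation for $\int g\cdot\nabla\phi_T$ by differentiating \eqref{Ann_as12} and testing against the dual massive solution $v_T$ of $\frac{1}{T}v_T-\nabla\cdot a^\star\nabla v_T=-\nabla\cdot g$, obtaining $\frac{\partial}{\partial a}\int g\cdot\nabla\phi_T=\nabla v_T\otimes(e_i+\nabla\phi_T)$. Inserting this into the $\LL^r$ spectral gap \eqref{Prop21a}, applying Cauchy--Schwarz as in Part 1, Step 2 of Proposition \ref{Propcorr}, and using the perturbative massive CZ estimate (Proposition \ref{PropmassCZ}, the massive counterpart of Proposition \ref{PropCZ}(i)) for $Fv_T$, I would obtain, for $r\gg 1$,
\begin{equation*}
\langl|\textstyle\int g\cdot\nabla\phi_T|^{2r}\rangl^{1/r}\lesssim\langl(\fint_{\Boule_1}|e_i+\nabla\phi_T|^2)^r\rangl^{1/r}\textstyle\int|g|^2.
\end{equation*}
The buckling step then mirrors Part 1, Steps 3--4 of Proposition \ref{Propcorr} via a massive Caccioppoli estimate (taken from the appendix, with $T$-dependence tracked), and is slightly simpler because $\phi_T$ itself is stationary, which bypasses the harmonic-coordinate trick; this yields \eqref{Ann_ap56} for $\nabla\phi_T$ together with the spatial-average bound $\langl|\int g\cdot\nabla\phi_T|^{2r}\rangl^{1/r}\lesssim\int|g|^2$.

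To promote these to the announced bound on $\phi_T$, I would adapt Part 1, Step 5 of Proposition \ref{Propcorr}, writing $\phi_T(0)=(\phi_T(0)-\fint_{\Boule_{\sqrt T}}\phi_T)+\fint_{\Boule_{\sqrt T}}\phi_T$. The first piece equals $\int\nabla\bar u\cdot\nabla\phi_T$ with $-\Delta\bar u=\delta_0-|\Boule_{\sqrt T}|^{-1}\un_{\Boule_{\sqrt T}}$ satisfying $\int|\nabla\bar u|^2\lesssim\mu_d^2(\sqrt T)$, so the spatial-average bound applies. The second piece is the new point: it is controlled by exploiting the exponential locality at scale $\sqrt T$ of the massive operator, converting via duality the $e_i$-inhomogeneity in \eqref{Ann_as12} into an effective test field whose squared $\LL^2$-norm is $\lesssim\mu_d^2(\sqrt T)$. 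Finally, for $\sigma_T$ and $\psi_T$, I would exploit that \eqref{Ann_as12s} and \eqref{Ann_as11} are constant-coefficient massive Poisson equations whose r.h.s.\ involves $\nabla\phi_T$ and $q_T-\abar_T e_i$ (already controlled); standard CZ and Schauder theory for $\frac{1}{T}-\Delta$, applied after rescaling by $\sqrt T$, then produces the stated bounds on $\nabla\sigma_T$, $\sigma_T$, $\nabla\psi_T/\sqrt T$ and $\psi_T/\sqrt T$.

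The main obstacle I expect is making the buckling uniform in $T\geq 1$: the perturbative massive CZ estimate must be applied with constants independent of $T$ so that the absorption argument closes, and the $T$-dependence of the massive Caccioppoli estimate has to match the buckling exponents cleanly. The handling of the $\fint_{\Boule_{\sqrt T}}\phi_T$ term is the only genuinely new input compared to the massless setting, and it is precisely what delivers the $\mu_d^2(\sqrt T)$ factor in \eqref{Ann_ap57}.
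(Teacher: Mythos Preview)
Your overall strategy is correct and parallels the paper's own proof closely: Malliavin representation, spectral gap, perturbative massive CZ on the dual problem, an annealed massive Caccioppoli estimate, buckling, and then constant-coefficient theory for $\sigma_T$ and $\psi_T$. The one substantive difference lies in how the $\mu_d^2(\sqrt{T})$ bound on $\phi_T$ is extracted. You work only with the functional $\int g\cdot\nabla\phi_T$ and then split $\phi_T(0)$ at scale $\sqrt{T}$, leaving the average $\fint_{\Boule_{\sqrt{T}}}\phi_T$ to a separate, somewhat vague ``exponential locality'' argument. The paper instead works from the start with the richer functional $F=\int(-\frac{1}{T}g\phi_T+f\cdot\nabla\phi_T)$, whose Malliavin derivative is still $\nabla v\otimes(e_i+\nabla\phi_T)$ with $v$ now solving the full massive dual equation $\frac{1}{T}v-\nabla\cdot a^\star\nabla v=\frac{1}{T}g+\nabla\cdot f$; consequently the buckling quantity is $\Lambda_r=\langl(\fint_{\Boule_1}|(\frac{\phi_T}{\sqrt{T}},\nabla\phi_T)|^2)^r\rangl^{1/r}$. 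The payoff is that $\int_{\Boule_1}\phi_T$ is represented \emph{directly} as $F$ with $(g,f)=(\bar u,-\nabla\bar u)$ where $(\frac{1}{T}-\Delta)\bar u=\mathds{1}_{\Boule_1}$, and the elementary estimate $\int|(\frac{\bar u}{\sqrt{T}},\nabla\bar u)|^2\lesssim\mu_d^2(\sqrt{T})$ for the massive fundamental solution delivers \eqref{Ann_ap57} in one stroke (and the same device, with $\bar u$ replaced by $\bar v$ solving $(\frac{1}{T}-\Delta)\bar v=-\nabla\cdot(\mathds{1}_{\Boule_1}e_l)$, yields the gradient averages for $\sigma_T$ and $\psi_T$). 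Your route can be made to work, but the paper's choice of the massive Green function rather than the massless one is what makes the $\mu_d(\sqrt{T})$ appear naturally and avoids the ad-hoc second piece.
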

      We postpone the proof of Lemma \ref{LemMassCorr} until Section \ref{Sec_LemMassCorr}.

      \begin{proof}[Argument for (\ref{Ann_as10})]
	By the uniqueness result \cite[Lem.\ 2.7]{GloriaOtto_2017_Corr}, it is enough to check (\ref{Ann_as10}) after applying the operator $\big(\frac{1}{\tau}-\Delta\big)$ to it, that is, in form of
	\begin{align*}
	  \big(\frac{1}{\tau}-\Delta\big)q_{\tau,i}=\frac{1}{\tau}\abar_\tau e_i+\nabla\cdot\big(\frac{1}{\tau}-\Delta\big)\sigma_{\tau,i} +\frac{1}{\tau}\big(\frac{1}{\tau}-\Delta\big)\psi_{\tau,i}.
	\end{align*}
	We first eliminate $\psi_\tau$ via its definition (\ref{Ann_as11}), leading to
	\begin{align*}
	-\Delta q_\tau=\nabla\cdot\big(\frac{1}{\tau}-\Delta\big)\sigma_\tau-\frac{1}{\tau}\nabla\phi_\tau.
	\end{align*}
	We then eliminate $\sigma_\tau$ by inserting $\nabla\cdot\big(\frac{1}{\tau}-\Delta\big)\sigma=\nabla\nabla\cdot q_\tau-
	\Delta q_\tau$, which is obtained by applying $\partial_k$ to equation (\ref{Ann_as12s}), to the effect of
	\begin{align*}
	0=\nabla\nabla\cdot q_\tau-\frac{1}{\tau}\nabla\phi_\tau.
	\end{align*}
	This is nothing else than the operator $-\nabla$ applied to equation \eqref{Ann_as12}, appealing to definition \eqref{Def_qT}.
      \end{proof}
  
  \subsubsection{Splitting}\label{Sec:0_Splitting}
    As announced, the massive extended correctors allow us to characterize the error in the massive two-scale expansion
    \begin{align}
      \label{Ann_ae6}
      w:=u_<-(1+\phi_{\tau,i} \partial_i)\ubar 
    \end{align}
    via
    \begin{equation}
      \begin{aligned}
	\frac{1}{\tau} w - \nabla \cdot a \nabla w
	=~& \nabla \cdot \big( ( \phi_{\tau,i} a  -\sigma_{\tau,i}) \nabla \partial_i \ubar  + \frac{1}{\tau} \partial_i \ubar \psi_{\tau,i}  \big)
	- \frac{1}{\tau}\phi_{\tau,i} \partial_i \ubar .
      \end{aligned}
      \label{Ann_ae7}
    \end{equation}
    This induces the following splitting $(u,\nabla u)=(v_>,h_>)+(v_<,h_<)$ into high-pass and low-pass parts via
      \begin{align}
	\left\{
	\begin{aligned}
	  v_>&:=u_> + \ubar_>,
	  \\
	  v_<&:=w+\ubar_< + \phi_{\tau,i} \partial_i \ubar ,
	  \\
	  h_>&:=\nabla u_> +  \partial_i \ubar_>(e_i + \nabla \phi_{\tau,i}),
	  \\
	  h_<&:=\nabla w + \partial_i \ubar_< (e_i + \nabla \phi_{\tau,i})  + \phi_{\tau,i} \nabla \partial_i \ubar,
	\end{aligned}
	\right.
	\label{Ann_ae12}
      \end{align}
      \textit{cf.} \eqref{Ann_ae3}, \eqref{Ann_ae3_bis}, and \eqref{Ann_ae6}, which will be used in Parts 1 and 2 of the proof of Proposition~\ref{PropCZ}.
      
      \parag{Argument for \eqref{Ann_ae7}} Equation \eqref{Ann_ae7} is derived by taking the difference between \eqref{Ann_ae4} and \eqref{Ann_ae5}, and by appealing to the intertwining relation (which generalizes \eqref{1b})
      \begin{align*}
      \nabla\cdot a\nabla(1+\phi_{\tau,i}\partial_i)\ubar =\nabla\cdot\abar_\tau\nabla \ubar 
      +\nabla\cdot\big((\phi_{\tau,i} a-\sigma_{\tau,i}) \nabla \partial_i\ubar +\frac{1}{\tau} \partial_i\ubar \psi_{\tau,i} \big).
      \end{align*}
      This intertwining relation itself follows via $\nabla(1+\phi_{\tau,i}\partial_i)\ubar $ $=\partial_i\ubar (e_i+\nabla\phi_{\tau,i})+\phi_{\tau,i}\nabla \partial_i\ubar $ from multiplying (\ref{Ann_as10}) by $\partial_i\ubar $ and using the identity \eqref{Id1} in form of $\nabla\cdot(\partial_i\ubar \nabla\cdot\sigma_{\tau,i})=-\nabla\cdot(\sigma_{\tau,i}\nabla \partial_i\ubar )$, which relies on the skew symmetry of $\sigma_{\tau,i}$.

  \subsubsection{Result for the massive equation}
      For further discussion, we denote the norm appearing in \eqref{Estim13_ter} by
      \begin{equation}\label{Num:701}
	\|h\|_{p,r}:=\Big(\int \big\langl|h|^r\big\rangl^\frac{p}{r}\Big)^\frac{1}{p}.
      \end{equation}
      As announced, we generalize Proposition \ref{PropCZ}(ii) to:      
      \begin{proposition}\label{PropmassCZ}
        Fix the exponents $p\in (1,\infty)$ and $1 \leq r'<r < \infty$.
        Let $\barC_{p,r',r}(T) \geq 1$ denote the smallest constant such that for all square-integrable random fields $u$, $g$, and $f$ related through the massive equation \eqref{Ann_ae1}, we have
	\begin{align}
	  \label{Ann_ee5}
	  \big\|\big(\frac{u}{\sqrt{T}},\nabla u\big)\big\|_{p,r'} \leq 
	  \barC_{p,r',r}(T) \big\|\big(\frac{g}{\sqrt{T}},f\big)\big\|_{p,r}.
	\end{align}
	Under the assumptions of Section \ref{SecAssumpGauss}, this constant satisfies
	\begin{equation}\label{Borne_C}
	  \barC_{p,r',r}(T) \lesssim_{\gamma,p,r',r} 1 \quad \pourtout T \geq 1.
	\end{equation}
      \end{proposition}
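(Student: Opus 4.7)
The plan is to bound $\barC_{p,r',r}(T)$ via a buckling argument that combines the perturbative CZ estimate from Proposition \ref{PropCZ}(i) with a homogenization-based estimate applied on an intermediate scale $\sqrt{\tau}\leq\sqrt{T}$, closed by a real interpolation argument. I would begin with the perturbative baseline: for exponents $(p,r)$ close to $(2,2)$, the constant-coefficient massive operator $\frac{1}{T}-\Delta$ is bounded on $\LL^p_{\R^d}(\LL^r_{\langl\cdot\rangl})$ on the level of $\big(\frac{1}{\sqrt{T}},\nabla\big)$, since $\LL^r_{\langl\cdot\rangl}$ is UMD. A Meyers-type argument identical to the one used for Proposition \ref{PropCZ}(i) then yields $\barC_{p,r',r}(T)\lesssim 1$ uniformly in $T\geq 1$ in the perturbative regime $|p-2|+|r-2|\ll 1$; this furnishes one endpoint for the eventual interpolation.

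Next, I would exploit the splitting \eqref{Ann_ae12}. The high-pass pair $(v_>,h_>)$ is controlled as follows: for $u_>$ solving \eqref{Def_u_tau}, apply the sought-for bound at scale $\tau$, i.e.~$\barC_{p,r'',r}(\tau)$, for a stochastic exponent $r''$ slightly larger than $r'$; for $\ubar_>$ solving the deterministic \eqref{Ann_ae10_bis}, invoke the constant-coefficient $\abar_\tau$-Helmholtz projection (again bounded on $\LL^p_{\R^d}(\LL^{r''}_{\langl\cdot\rangl})$ via UMD). The stationary factor $e_i+\nabla\phi_{\tau,i}$ that appears in $h_>$ will be absorbed using H\"older's inequality in probability combined with the corrector moments from Lemma \ref{LemMassCorr}, and it is this H\"older step that forces the strict inequality $r''>r'$.

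The low-pass pair $(v_<,h_<)$ is assembled from the two-scale expansion error $w:=u_<-(1+\phi_{\tau,i}\partial_i)\ubar$, which solves \eqref{Ann_ae7}. Applying $\barC_{p,r',r''}(\tau)$ to \eqref{Ann_ae7} and using Lemma \ref{LemMassCorr} to bound the massive corrector factors $(\phi_\tau,\sigma_\tau,\psi_\tau/\sqrt{\tau})$ by $\mu_d(\sqrt{\tau})$ in high moments, then controlling $\ubar$ via deterministic CZ estimates for $\frac{1}{\tau}-\nabla\cdot\abar_\tau\nabla$ applied to \eqref{Ann_ae5}, will lead to a bound of $(v_<,h_<)$ in terms of $u$ itself with a smallness prefactor coming from the fact that $\big(\frac{1}{\tau}-\frac{1}{T}\big)$ enters the RHS of \eqref{Ann_ae5}. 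Collecting all contributions, I expect a self-consistency inequality of the schematic form
\begin{equation*}
\barC_{p,r',r}(T)\;\lesssim\;1\;+\;\Big(\frac{\tau}{T}\Big)^{\eta}\mu_d(\sqrt{\tau})\,\barC_{p,r'',r}(\tau),
\end{equation*}
valid for all $\tau\leq T$ and some $\eta>0$, where $r''>r'$ records the loss of stochastic integrability.

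The hard part will be closing this buckling, since the strict loss $r''>r'$ precludes a direct iteration: no choice of $\tau\ll T$ returns the estimate at the original integrability. This is where a real interpolation argument enters, trading the $(\tau/T)^\eta$-smallness for integrability. Concretely, I would view the solution operator as acting between the perturbative endpoint space (where $\barC$ is already bounded with no integrability loss, by the first paragraph) and a ``degraded'' endpoint given by the displayed inequality (where $\barC(\tau)$ is finite at exponent $r''$, with the small prefactor). The $K$-method of real interpolation should then produce an intermediate inequality in which the smallness factor is preserved in a weakened form while the stochastic exponent is restored to $r'$; taking $\tau/T$ sufficiently small to absorb the remaining term into the left-hand side will close the estimate and yield \eqref{Borne_C}. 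The massless Proposition \ref{PropCZ}(ii) then follows by letting $T\uparrow\infty$. Apart from this interpolation, which is the technical heart of the argument, the main care goes into tracking how the various H\"older splits between corrector moments and $\ubar$-norms interact with the exponents $p$, $r'$, $r''$, $r$.
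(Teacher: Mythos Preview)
Your schematic self-consistency inequality is structurally wrong, and the error propagates through the rest of the plan. The low-pass contribution $(v_<,h_<)$ depends on $\ubar$, which via \eqref{Ann_ae5} depends on $u$ itself; hence the right-hand side must contain $\barC_{p,s',r}(T)$ at the \emph{same} scale $T$, not $\barC_{p,r'',r}(\tau)$. Moreover the smallness does not come from the factor $(\frac{1}{\tau}-\frac{1}{T})$ (which is $O(\frac{1}{\tau})$ and produces no $(\tau/T)^\eta$); it comes from the sublinearity of the massive correctors in Lemma \ref{LemMassCorr}, yielding $\mu_d(\sqrt{\tau})/\sqrt{\tau}\sim\sqrt{\tau}^{-1/2}$, which tends to zero as $\tau\uparrow\infty$ \emph{independently of $T$}. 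The correct shape is therefore (cf.~\eqref{Ann_ae141}, \eqref{Ann_ae17})
\[
\|(v_>,h_>)\|_{p,r''}\lesssim \barC_{p,r',r}(\tau)\|(\tfrac{g}{\sqrt T},f)\|_{p,r},
\qquad
\|(v_<,h_<)\|_{p,s''''}\lesssim \barC_{p,s''',s''}(\tau)\,\sqrt{\tau}^{-1/2}\,\barC_{p,s',r}(T)\|(\tfrac{g}{\sqrt T},f)\|_{p,r},
\]
with $r''<r'$ (the corrector factors \emph{lower}, not raise, the stochastic exponent). This already shows that your proposed buckling cannot close as written.

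There is a second, deeper gap: to absorb the right-hand $\barC_{p,s',r}(T)$ one needs $\barC_{p,s''',s''}(\tau)\sqrt{\tau}^{-1/2}\ll 1$ for some large $\tau$, i.e.\ an \emph{a priori} suboptimal bound of the form $\barC_{p,\cdot,\cdot}(\tau)\lesssim\sqrt{\tau}^{\theta/2}$ for some $\theta<1$. The perturbative Meyers regime only supplies this near $(p,r)=(2,2)$ and cannot be iterated to reach arbitrary exponents. The paper obtains this input from an independent, robust but suboptimal estimate $\barC_{p,r',r}(T)\lesssim\sqrt{T}^{d}$ (Lemma \ref{Lem_SsOpti}), proved via locality of the massive resolvent (exponentially weighted energy estimate) combined with small-scale regularity; this ingredient is entirely missing from your plan. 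The real interpolation then works differently from what you describe: $\tau$ itself is the $K$-method parameter and the two pieces of the splitting live at \emph{different} stochastic integrabilities $r''$ and $s''''$, so that interpolation yields the target $s'$ with a bound $\barC_{p,s',r}(T)\lesssim\Lambda\,\barC_{p,s',r}^\theta(T)$. Finally, extending beyond a neighborhood of $(2,2)$ requires an additional complex-interpolation step (between the freshly obtained $\barC\lesssim 1$ and the suboptimal $\sqrt{\tau}^{d}$ bound) to regenerate the input $\barC\lesssim\sqrt{\tau}^{\theta/2}$ at larger exponents, and then an iteration; this whole mechanism is absent from your proposal.
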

      
      The proof of Proposition \ref{PropmassCZ} is done in Section \ref{Sec:ProofPropMassCZ} and relies on only two ingredients:
      \begin{itemize}
        \item{the homogenized (constant-coefficient) operator $\frac{1}{T}-\nabla\cdot\abar_\tau\nabla$ satisfies \eqref{Ann_ee5}, with a constant $\bar{C}_{p,r',r}$ independent of $T$, and even for $r'=r$ (see Section \ref{Ann_ssconst});}
        \item{the massive correctors are strictly sublinear in $\sqrt{T}$ (see Lemma \ref{LemMassCorr}).}
      \end{itemize}
      Philosophically speaking, we substitute any regularity theory for the operator $\frac{1}{T}-\nabla\cdot a\nabla$ by regularity theory for the operator $\frac{1}{T}-\nabla\cdot\abar_\tau\nabla$.
      This substitution relies on functional analysis, mostly interpolation.
      Literally, the only large-scale regularity theory ingredient for the massive operator $\frac{1}{T}-\nabla\cdot a\nabla$ is the whole-space energy estimate (see Section \ref{Sec:Lem_SsOpti}).
      This approach quite different from the one based on quenched CZ estimates (briefly described in Section \ref{SecLocal}).
  
  \subsubsection{Strategy for dealing with the loss in the stochastic integrability}\label{Sec:Strat}
      The strategy is to start from the standard annealed CZ estimates for the solution $\ubar$ of the constant-coefficient equation \eqref{Ann_ae5} (\textit{cf.}~Section \ref{Ann_ssconst}) and to buckle on the level of the optimal constant $\barC_{p,r',r}(T)$ in \eqref{Ann_ee5} by applying CZ estimates also to the equation \eqref{Ann_ae7} for $w$. 
      The main challenge in this strategy is the loss in stochastic integrability coming from the need to feed in the estimates on the massive correctors (\ref{Ann_ap57}). 
      To overcome this challenge, we use a real interpolation argument. 
      The loss in stochastic integrability of the low-pass part $u_<$ is compensated by its smallness when $\tau$ is large and by the gain in stochastic integrability of the high-pass part (see Parts 1 and 2 of the proof of Proposition \ref{PropmassCZ}).
      
      To carry out this real interpolation argument, we need an independent estimate of the high-pass part $u_>$. 
      It is provided in Section \ref{Sec:Lem_SsOpti} by Lemma \ref{Lem_SsOpti}, which is based on the locality of $u_>$. 
      The estimate \eqref{Ann_ee2*} therein is highly suboptimal in its scaling in $\sqrt{T}$ (which plays the role of $\sqrt{\tau}$); this however is balanced by the smallness of $u_<$ in our real interpolation argument.
      Finally, by complex interpolation starting from the pivotal energy estimate \eqref{Ann_ap25}, we iteratively enlarge the zone in which we have $\barC_{p,r',r}(T) \lesssim 1$, reaching any admissible $3$-tuple of exponents $(p,r',r)$ in a finite number of steps (see Parts 3 and 4 of the proof of Proposition \ref{PropmassCZ}).
  
%%%%%%%%%%%%%%%%%%%%%%%%%%%%%%%%%%%%%%%%%%%%%%%%%%%%%%%%%%%%%%%%%%%%%%%%%%%%%%%%

    \subsubsection{The constant-coefficient estimates}\label{Ann_ssconst}

      We only need one result for the constant-coefficient equation, namely:
      \begin{lemma}\label{LemCstMassCZ}
        Assume that $\abar$ is a constant coefficient satisfying  \eqref{Ellipticiteabar}. Then, for any $p, r \in (1,\infty)$, the solution $\ubar$ to the massive equation
	\begin{align}
	  \label{Ann_masseq}
	  \frac{1}{T} \ubar - \nabla \cdot \abar \nabla \ubar =\frac{1}{T} g + \nabla \cdot f
	\end{align}
	satisfies
	\begin{align}
	  \label{Ann_eee23}
	  \big\|\big( \frac{\ubar}{\sqrt{T}}, \nabla \ubar \big) \big\|_{p,r} \lesssim_{d,\lambda,p,r} \big\|\big( \frac{g}{\sqrt{T}}, f \big)\big\|_{p,r}.
	\end{align}
      \end{lemma}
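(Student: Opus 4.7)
The plan is to identify the four Fourier multiplier operators mapping $(g/\sqrt{T}, f)$ to $(\ubar/\sqrt{T}, \nabla\ubar)$, verify that their symbols satisfy the H\"ormander--Mihlin condition uniformly in $T \geq 1$, and then appeal to the vector-valued Mihlin multiplier theorem in the UMD space $\LL^r_{\langl\cdot\rangl}$.

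Since $\abar$ is deterministic and constant, the solution of \eqref{Ann_masseq} reads, on the Fourier side, $\FF\ubar(k) = (1/T + k\cdot\abar k)^{-1}(\FF g(k)/T + ik\cdot \FF f(k))$. Setting $G := g/\sqrt{T}$, this identifies the four constituent operators $G \mapsto \ubar/\sqrt{T}$, $G\mapsto \nabla\ubar$, $f\mapsto \ubar/\sqrt{T}$, $f\mapsto \nabla\ubar$ as Fourier multipliers with respective symbols
\begin{align*}
\frac{1}{1+Tk\cdot\abar k}, \qquad \frac{i\sqrt{T}\,k_j}{1+Tk\cdot\abar k}, \qquad \frac{i\sqrt{T}\,k_j}{1+Tk\cdot\abar k}, \qquad \frac{-Tk_ik_j}{1+Tk\cdot\abar k}.
\end{align*}
Each of these has the form $M(\sqrt{T}k)$, where $M$ is a smooth rational function of the dual variable $\hat k$ satisfying $|\hat k|^{|\alpha|}|\partial^\alpha M(\hat k)| \lesssim_{\alpha, d, \lambda} 1$ for every multi-index $\alpha$, as follows from the ellipticity \eqref{Ellipticiteabar} of $\abar$. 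The chain rule then yields the Mihlin estimate $|k|^{|\alpha|}|\partial^\alpha m(k)| \lesssim 1$, uniformly in $T \geq 1$.

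Next, I would invoke the vector-valued Mihlin multiplier theorem: a scalar-valued Mihlin multiplier extends to a bounded operator on $\LL^p(\R^d; X)$ for any UMD Banach space $X$ and any $1 < p < \infty$. Since $\LL^r_{\langl\cdot\rangl}$ is UMD for $1 < r < \infty$ (the same structural input already used in the proof of Proposition \ref{PropCZ}(i)), and the four multipliers act only on the spatial variable and hence commute with all operations in probability, this immediately yields \eqref{Ann_eee23} with a constant depending only on $d, \lambda, p, r$.

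No serious obstacle arises: the determinism and constancy of $\abar$ reduce the lemma to a piece of classical harmonic analysis. The only delicate point is the uniformity in $T$, which becomes transparent via the dilation $\hat k = \sqrt{T}k$ preserving the Mihlin class, and which explains why the pairing $(\cdot/\sqrt{T}, \cdot)$ in \eqref{Ann_ee5} is the natural object for this family of operators.
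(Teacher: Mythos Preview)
Your proposal is correct and follows essentially the same approach as the paper: both identify the solution operator as a Fourier multiplier, check the Mihlin condition (the paper does this after scaling to $T=1$, you via the equivalent observation that the symbols have the form $M(\sqrt{T}k)$), and then invoke the vector-valued Mihlin theorem using that $\LL^r_{\langl\cdot\rangl}$ is UMD. The paper packages the four scalar multipliers into one $(d{+}1)\times(d{+}1)$ block symbol and cites \cite[Th.~1.1]{McConnell_1984} explicitly, but this is only a presentational difference.
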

      
      The proof is based on a version of the Mikhlin theorem, which involves Fourier multipliers \cite[Th.\ 1.1]{McConnell_1984}.
      By scaling, we may assume $T=1$.
      The functional space $\LL^{r}_{\langl\cdot\rangl}$ is a UMD space (see \cite[Def.\ 4.2.1, p.\ 281]{Weis_Book} for a definition).
      Moreover, the Fourier multiplier $m$ corresponding to the solution operator $\mathfrak{M}:(g,f) \mapsto (\ubar,\nabla \ubar )$ of \eqref{Ann_masseq} has the block structure
      \begin{align*}
	m(k)=\frac{1}{1 + k \cdot \abar k} \left( \begin{array}{c|c} 1 & \ii  k^\star \\ \hline \ii k & -  k \otimes k \end{array} \right),
      \end{align*}
      where $k^\star$ is the vector $k$ transposed.
      Obviously, the symbol $m$ belongs to $\CC^{d+1}(\R^d \backslash \{0\})$, and satisfies
      \begin{align*}
	\sup_{k \in \R^d\backslash\{0\}}\sum_{j=0}^{d+1} |k|^j |\nabla^j m(k)| \lesssim_{d,\lambda} 1.
      \end{align*}
      Therefore, by \cite[Th.\ 1.1]{McConnell_1984}, the operator $\mathfrak{M}$ extends from $\LL^p(\R^d)$ to $\LL^{p}(\R^d,\LL^r_{\langl\cdot\rangl})$.
      This establishes \eqref{Ann_eee23}.

    \subsection{Suboptimal CZ estimates for the massive equation}\label{Sec:Lem_SsOpti}
    As mentioned above, we need a robust but suboptimal estimate of $\barC_{p,r',r}(T)$:
    \begin{lemma}\label{Lem_SsOpti}
	Let $T \geq 1$.
	Under the assumptions of Section \ref{SecAssumpGauss}, the operator norm denoted by $\barC_{p,r',r}(T)$ in \eqref{Ann_ee5} satisfies
	\begin{align}
	\label{Ann_ap25}
	\barC_{2,2,2}(T)&\lesssim_{d,\lambda} 1,
	\\
	\label{Ann_ee2*}
	\barC_{p,r',r}(T) &\lesssim_{\gamma,p,r',r}
	\sqrt{T}^d \qquad \text{provided } p \in (1,\infty) \text{ and } 1\leq r'<r<\infty.
      \end{align}
    \end{lemma}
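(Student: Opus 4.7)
For \eqref{Ann_ap25} the plan is the deterministic energy estimate. Testing \eqref{Ann_ae1} against $u$ and using \eqref{Ellipticite} with Young's inequality gives, pointwise in $\omega$,
\begin{equation*}
\tfrac{1}{T}\|u\|_{\LL^2(\R^d)}^2+\|\nabla u\|_{\LL^2(\R^d)}^2\lesssim_{d,\lambda}\tfrac{1}{T}\|g\|_{\LL^2(\R^d)}^2+\|f\|_{\LL^2(\R^d)}^2.
\end{equation*}
Since $\|h\|_{2,2}^2=\langl\|h\|_{\LL^2(\R^d)}^2\rangl$ by Fubini, taking the expectation yields $\barC_{2,2,2}(T)\lesssim 1$.

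For \eqref{Ann_ee2*} the plan is to upgrade the $\LL^2_x\LL^2_\omega$-bound of the first part to an $\LL^p_x\LL^{r'}_\omega$-bound via classical small-scale regularity, which is available here because, thanks to \eqref{wg03}, the coefficient field $a$ is almost surely H\"older continuous on unit scales with all stochastic moments. Concretely, I would apply the Schauder/CZ estimate of Lemma \ref{LemAppendHold} to the massive operator $\frac{1}{T}-\nabla\cdot a\nabla$ on unit-size balls to get, pointwise in $\omega$ and at any $x_0\in\R^d$,
\begin{equation*}
\bigl|\bigl(\tfrac{u(x_0)}{\sqrt{T}},\nabla u(x_0)\bigr)\bigr|\lesssim(1+\|a\|_{\CC^{\alpha'}(\Boule_1(x_0))})^{K}\Bigl(\bigl\|\bigl(\tfrac{u}{\sqrt{T}},\nabla u\bigr)\bigr\|_{\LL^2(\Boule_1(x_0))}+\bigl\|\bigl(\tfrac{g}{\sqrt{T}},f\bigr)\bigr\|_{\LL^r(\Boule_1(x_0))}\Bigr).
\end{equation*}
Taking $\LL^{r'}_\omega$-moments, H\"older's inequality in probability combined with \eqref{wg03} absorbs the polynomial factor in $\|a\|_{\CC^{\alpha'}}$ at the price of a slight reduction of stochastic integrability $r'\rightsquigarrow r''\in(r',r)$; the data-driven contribution is then handled by Minkowski in probability. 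The problem reduces to bounding $\int\langl\|(u/\sqrt{T},\nabla u)\|_{\LL^2(\Boule_1(x_0))}^{r''}\rangl^{p/r''}\dd x_0$.

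For this, I would localize the deterministic energy estimate on the natural scale $\sqrt{T}$ of the massive operator by a Caccioppoli iteration on concentric annuli, where the mass term is what allows the iteration to close with exponential decay measured in units of $\sqrt{T}$. This yields an inequality of convolution type
\begin{equation*}
\bigl\|\bigl(\tfrac{u}{\sqrt{T}},\nabla u\bigr)\bigr\|_{\LL^2(\Boule_1(x_0))}^{2}\lesssim\int K_T(x_0-y)\bigl\|\bigl(\tfrac{g}{\sqrt{T}},f\bigr)\bigr\|_{\LL^2(\Boule_1(y))}^{2}\dd y
\end{equation*}
where $K_T$ decays exponentially on scale $\sqrt{T}$. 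The crude passage from $\LL^2_x$ to $\LL^p_x$ would then be carried out by H\"older's inequality in space on balls of radius $\sqrt{T}$, generating the volume factor $|\Boule_{\sqrt{T}}|\sim\sqrt{T}^{d}$ that is the source of the announced loss. The hard part is precisely this last transfer: $\LL^2_x\to\LL^p_x$ is not automatic and has to exploit the locality of the massive operator, and one must arrange the argument so that the loss is captured by a single power $\sqrt{T}^{d}$ uniformly in $T\ge 1$. No optimality is required here since \eqref{Ann_ee2*} only serves as a starter for the real-interpolation bootstrap in Proposition \ref{PropmassCZ}, the crude $\sqrt{T}^{d}$ factor being absorbed later by the smallness of the low-pass part $u_<$ when $\sqrt{\tau}\ll\sqrt{T}$.
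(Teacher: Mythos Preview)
Your plan is sound and reaches the same conclusion, but the core mechanism differs from the paper's. You share with the paper the energy estimate for \eqref{Ann_ap25}, the use of small-scale H\"older regularity to pass between the genuine norm $\|\cdot\|_{p,r'}$ and a ball-averaged version, and the weighted energy estimate on scale $\sqrt{T}$ (your convolution inequality with kernel $K_T$ is equivalent to the paper's exponentially weighted estimate $\|\omega_T(\frac{u}{\sqrt{T}},\nabla u)\|_{2,2}\lesssim\|\omega_T(\frac{g}{\sqrt{T}},f)\|_{2,2}$). Where you diverge is in how the mixed norm $\int\langle\|(\frac{u}{\sqrt{T}},\nabla u)\|_{\LL^2(\Boule_1(x_0))}^{r''}\rangle^{p/r''}\dd x_0$ is controlled. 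You take the pathwise convolution bound $H^2\lesssim K_T*F$, apply Minkowski in probability (for $r''\ge 2$) to get $\langle H^{r''}\rangle^{2/r''}\lesssim K_T*\langle F^{r''/2}\rangle^{2/r''}$, and then Young in space; for $p\ge 2$ this already gives $\sqrt{T}^{d/2}$, and $p<2$ follows by duality. The paper instead introduces the three-index norms $\|\cdot\|_{p,r,q}$ and, crucially, multiplies the equation by an arbitrary random variable $F=F(a)$ independent of $x$: since \eqref{Ann_ae1} is preserved, the weighted energy estimate applies to $Fu$, and H\"older in probability separates $F$ from the data. Combined with the nestedness $\|\cdot\|_{\infty,2,q}\lesssim\|\cdot\|_{2,2,q}$ and the duality identity $\|h\|_{\infty,r',q}=\sup_{F}\|Fh\|_{\infty,2,q}$, this passes through $\LL^\infty_x$ and handles all $p\in(1,\infty)$ in one stroke. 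Your route is more elementary; the paper's makes the loss of stochastic integrability structurally transparent and avoids the case split in $p$.

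One cosmetic point: your pointwise Schauder bound at $x_0$ invokes Lemma~\ref{LemAppendHold} in the form \eqref{cw18}, which is stated only for $g\equiv 0$; for general $g$ use the CZ form \eqref{cw15} (or directly the paper's Lemma~\ref{Lem_AnnealedCZ_2}(i)) and work with $\LL^p$-averages on unit balls rather than point values. This does not affect your argument.
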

    Lemma \ref{Lem_SsOpti} relies on the locality on scale $\sqrt{T}$ of solutions to the massive equation, and on the regularity on scale $1$ of the coefficient field (see Lemma \ref{Lem_AnnealedCZ_2}).
    These two properties allow for estimates between different spatial $\LL^p$-norms, where locality provides the large-scale cut-off, and regularity the small-scale cut-off. 
    In particular, we will jump between the $\LL^2_{\R^d}$-norm, on which scale we have energy estimates, and the $\LL^\infty_{\R^d}$-norm, where we may handle the $\LL^r_{\langle\cdot\rangle}$-norm.

    \begin{proof}
    Throughout the proof, the square-integrable random fields $u$, $g$, and $f$ are related by \eqref{Ann_ae1}.
    By a duality argument and Jensen's inequality, we may restrict to the case of $2\le r'<r$. As mentioned above, the local regularity of $a$ allows us to use CZ estimates on scales $\le 1$. We capitalize on this in form of Lemma \ref{Lem_AnnealedCZ_2}(i), via a family of norms that treat scales $\le 1$ separately, namely
    \begin{align}\label{Ann_ap20}
    \|h\|_{p,r,q}:=\bigg(\int \Big\langl\Big(\fint_{\Boule_1(x)}|h|^q\Big)^\frac{r}{q}\Big\rangl^\frac{p}{r}\dd x\bigg)^\frac{1}{p},
    \end{align}
    where we think of the exponent $q \in(1,\infty)$ of the innermost spatial norm as being close to $1$ (of course, the precise value $1$ of the radius in \eqref{Ann_ap20} is not important).
    In Step 1, we will derive the crucial property of these norms, namely that they decrease with increasing spatial exponent $p$, see \eqref{Ann_ap39}.
    As mentioned above, the massive term provides an approximate locality on scale $\sqrt{T}$, which in Step 2 we capture through the energy estimate with exponential weight on that scale, see \eqref{Ann_ap34}. 
    In Step 3, we use local regularity to express this weighted energy estimate on the level of the norms \eqref{Ann_ap20}. 
    In Step 4, we derive the statement of this lemma on the scale of the norms \eqref{Ann_ap20}, see \eqref{Num:703}. 
    In Step 5, finally, we use once more local regularity to return to the original norms \eqref{Num:701}.

    \parag{Step 1: Nestedness properties of the norms}
      We claim the following properties of the norms \eqref{Ann_ap20}:
      \begin{align}
	\label{Ann_ap39}
	&\|h\|_{\bar{p},r,q}\lesssim\|h\|_{p,r,q} &&\mbox{provided}\;p\le \bar{p},
	\\
	\label{Num:03_bis}
        &\|h\|_{p,r'}\leq\|h\|_{p,r} \quad \et \quad \|h\|_{p,r',q}\leq\|h\|_{p,r,q} &&\mbox{provided}\;r'\le r,
        \\
	\label{Num:706}
        &\|h\|_{p,r,q}\leq \|h\|_{p,r} &&\mbox{provided}\; q \leq \min\{p,r\}.
      \end{align}
      
      We start with the argument for (\ref{Ann_ap39}). 
      The core is the following discrete $\ell_{\bar{p}}-\ell_p$ estimate,
      where we introduce the abbreviation $\bar{h}_R(y):=\langl(\fint_{\Boule_R(y)}|h|^q)^\frac{r}{q}\rangl^\frac{1}{r}$:
      \begin{align*}
      \big(\sum_{z\in\mathbb{Z}^d}|\bar h_1(x+z)|^{\bar{p}}\big)^\frac{1}{{\bar{p}}}
      \le
      \big(\sum_{z\in\mathbb{Z}^d}|\bar h_1(x+z)|^p\big)^\frac{1}{p}.
      \end{align*}
      Using that $0\le\bar {h}_1(y)\lesssim\bar {h}_2(y')$ for $|y'-y|<1$ and thus $|\bar h_1(y)|^p\lesssim\fint_{\Boule_1(y)}|\bar h_2|^p$, the above inequality may be upgraded to
      \begin{align*}
	\big(\sum_{z\in\mathbb{Z}^d}|\bar h_1(x+z)|^{\bar{p}}\big)^\frac{1}{{\bar{p}}}
	\lesssim
	\Big(\int |\bar h_2|^p\Big)^\frac{1}{p}.
      \end{align*}
      Taking the $\LL^{\bar{p}}$-norm in $x\in[0,1)^d$ of this estimate, we obtain
      \begin{align}\label{Ann_ee21}
      \Big(\int |\bar h_1|^{\bar{p}}\Big)^\frac{1}{{\bar{p}}}
      \lesssim\Big(\int |\bar h_2|^p\Big)^\frac{1}{p}.
      \end{align}
      Using now the elementary geometric fact that there exist $N\lesssim 1$ shift vectors $z_1,\cdots,z_N\in\R^d$ such that $\Boule_2(x)$ $\subset\bigcup_{n=1}^N \Boule_1(x-z_n)$ and therefore $(\fint_{\Boule_2(x)}|h|^q)^\frac{1}{q}$  ${\leq\sum_{n=1}^N(\fint_{\Boule_1(x-z_n)}|h|^q)^\frac{1}{q}}$, we obtain by the triangle inequality and the shift invariance of the norm $(\int \langl|h|^r\rangl^\frac{{\bar{p}}}{r})^\frac{1}{{\bar{p}}}$ that
      \begin{align}\label{Ann_ap23}
	\bigg(\int \Big\langl\Big(\fint_{\Boule_2(x)}|h|^q\Big)^\frac{r}{q}\Big\rangl^\frac{p}{r}\dd x\bigg)^\frac{1}{p}
	\le N\|h\|_{p,r,q}\lesssim \|h\|_{p,r,q}.
      \end{align}
      By definitions of $\bar h_1$ and of $\bar h_2$, the latter in conjunction with \eqref{Ann_ap23},
      \eqref{Ann_ee21} turns into (\ref{Ann_ap39}).

      Inequalities \eqref{Num:03_bis} are obvious from Jensen's inequality in probability.
      
      We finally turn to \eqref{Num:706}, which is a consequence of Jensen's inequality:
      Rewriting definition \eqref{Ann_ap20} in form of
      \begin{align*}
      \|h\|_{p,r,q}&=\Big(\int\big\langle\big(\fint_{\Boule_1}|h(x+z)|^qdz\big)^\frac{r}{q}
      \big\rangle^\frac{p}{r}dx\Big)^\frac{1}{p}
      =\Big\|\big\|\fint_{\Boule_1}|h(\cdot+z)|^qdz\big\|_{\LL^\frac{r}{q}_{\langle\cdot\rangle}}
      \Big\|_{\LL^\frac{p}{q}_{\mathbb{R}^d}}^\frac{1}{q},
      \end{align*}
      we learn from the convexity and translation invariance of the involved norms  (here we use $q\le\min\{p,r\}$) that
      \begin{align*}
      \|h\|_{p,r,q}&\le
      \Big(\fint_{\Boule_1}\big\|\||h(\cdot+z)|^q\|_{\LL^\frac{r}{q}_{\langle\cdot\rangle}}
      \big\|_{\LL^\frac{p}{q}_{\mathbb{R}^d}}dz\Big)^\frac{1}{q}
      \stackrel{\eqref{Num:701}}{=}\|h\|_{p,r}.
      \end{align*}
      
    \parag{Step 2: The pivotal estimate \eqref{Ann_ap25}}
      In fact, in the next step, we need its local version
      \begin{equation}\label{Ann_ap34}
	\big\|\omega_T \big(\frac{u}{\sqrt{T}},\nabla u\big)\big\|_{2,2}
	\lesssim_{d,\lambda} \big\|\omega_T \big(\frac{g}{\sqrt{T}},f\big)\big\|_{2,2},
      \end{equation}
      where the weight is of exponential form 
      \begin{equation}\label{Ann_ap21}
      \omega_T(x):=\exp(-\frac{|x|}{C\sqrt{T}})
      \end{equation}
      for a constant $C=C(d,\lambda)$ fixed below.

      Starting with \eqref{Ann_ap25}, we test \eqref{Ann_ae1} with $u$, use the uniform $\lambda$-ellipticity of $a$ and the Cauchy-Schwarz inequality, which gives
      \begin{equation}\label{Energ}
      \int  \big|\big(\frac{u}{\sqrt{T}},\nabla u\big)\big|^2
      \lesssim
      \int \big|\big(\frac{g}{\sqrt{T}},f\big)\big|^2.
      \end{equation}
      We now take the expectation, which we exchange with the spatial integral, yielding \eqref{Ann_ap25} in the form of
      \begin{align}\label{Ann_ap97}
      \big\|\big(\frac{u}{\sqrt{T}},\nabla u\big)\big\|_{2,2}
      \lesssim\big\|\big(\frac{g}{\sqrt{T}},f\big)\big\|_{2,2}.
      \end{align}

      %\medskip

      We now upgrade \eqref{Ann_ap97} to \eqref{Ann_ap34}:
      Multiplying (\ref{Ann_ae1}) with a cut-off function $\omega$ we obtain
      by Leibniz' rule
      \begin{align*}
	\frac{1}{T}\omega u-\nabla\cdot a\nabla (\omega u)=\frac{1}{T}\omega g-\nabla\omega\cdot(a\nabla u+f)+\nabla\cdot(\omega f-au\nabla\omega).
      \end{align*}
      Appealing to \eqref{Ann_ap97} yields
      \begin{align*}
      \big\|\omega\big(\frac{u}{\sqrt{T}},\nabla u\big)\big\|_{2,2}
      \lesssim\big\|\omega\big(\frac{g}{\sqrt{T}},f\big)\big\|_{2,2}
      +\sqrt{T}\|\nabla\omega\cdot(a\nabla u+f)\|_{2,2}
      +\|u\nabla\omega\|_{2,2}
      .
      \end{align*}
      Specifying $\omega$ to be of the form of \eqref{Ann_ap21}, for which $|\nabla\omega_T|\le\frac{1}{C\sqrt{T}}\omega_T$, this entails
      \begin{align*}
      &\big\|\omega_T\big(\frac{u}{\sqrt{T}},\nabla u\big)\big\|_{2,2}
      \\
      &\qquad \lesssim \frac{1+C}{C}\|\omega_T f\|_{2,2} +\frac{1}{\sqrt{T}}\|\omega_T g\|_{2,2}
      +\frac{1}{C\sqrt{T}}\|\omega_T u\|_{2,2}+\frac{1}{C}\|\omega_T\nabla u\|_{2,2}.
      \end{align*}
      For $C \gg_{d,\lambda} 1$, the two last r.~h.~s.\ terms may be absorbed, giving rise to \eqref{Ann_ap34}.
      
    \parag{Step 3: Pivotal estimate on the scale of norms \eqref{Ann_ap20}}
      We claim that, for any $s'>2$ and $q \in (1,2]$, there exists a constant $C \gg_{d,\lambda,s',q} 1$ such that, defining $\omega_T$ by \eqref{Ann_ap21}, there holds
      \begin{equation}
        \label{Num:702}
        \big\|\omega_T \big(\frac{u}{\sqrt{T}},\nabla u\big)\big\|_{2,2,q}
	\lesssim_{\gamma,s',q} \big\|\omega_T \big(\frac{g}{\sqrt{T}},f\big)\big\|_{2,s',q}.
      \end{equation}
      
      The argument relies on complex interpolation.
      We choose a $q' \in (1,q)$ and then define $\theta \in (0,1)$ and $s \in (s',\infty)$ through
      \begin{equation*}
        \frac{1}{q}= \frac{\theta}{2}+\frac{1-\theta}{q'} \quad \et \quad \frac{1}{s'}=\frac{\theta}{2}+\frac{1-\theta}{s}.
      \end{equation*}
      We make use of the estimate \eqref{Num:116} of Lemma \ref{Lem_AnnealedCZ_2}(ii) (replacing $(r,q) \rightsquigarrow (s,q')$), which we copy here:
      \begin{equation}\label{Num:116_bis}
        \big\|\big(\frac{u}{\sqrt{T}},\nabla u\big)\big\|_{2,2,q'} \lesssim \big\|\big(\frac{u}{\sqrt{T}},\nabla u\big)\big\|_{2,s,q'}.
      \end{equation}
      Appealing to \eqref{Num:556}, we may identify the norms $\|\cdot\|_{2,2}$ and $\|\cdot\|_{2,2,2}$ so that \eqref{Ann_ap34} reads
      \begin{equation}\label{Ann_ap34_bis}
        \big\|\omega_T \big(\frac{u}{\sqrt{T}},\nabla u\big)\big\|_{2,2,2}
	\lesssim \big\|\omega_T \big(\frac{g}{\sqrt{T}},f\big)\big\|_{2,2,2},
      \end{equation}
      By complex interpolation between \eqref{Num:116_bis} and \eqref{Ann_ap34_bis} (using the Stein-Weiss theorem \cite[Th.\ 5.4.1 p.\ 115]{BerghLofstrom}), we get
      \begin{equation*}
        \big\|\omega_T^\theta \big(\frac{u}{\sqrt{T}},\nabla u\big)\big\|_{2,2,q}
	\lesssim_{\gamma,s',q} \big\|\omega_T^\theta \big(\frac{g}{\sqrt{T}},f\big)\big\|_{2,s',q}.
      \end{equation*}
      Since $\omega_T^\theta(x)=\exp(-\frac{\theta|x|}{C\sqrt{T}})$ by \eqref{Ann_ap21}, we obtain \eqref{Num:702} by adapting the definition of~$C$.
      
    \parag{Step 4: Suboptimal estimates on the scale of norms \eqref{Ann_ap20}}
      We now are given $p \in (1,\infty)$, $2< r'<r$, and $q \leq \min\{p,2\} \in (1,2]$, and establish
      \begin{equation}\label{Num:703}
	\big\|\big(\frac{u}{\sqrt{T}},\nabla u\big)\big\|_{p,r',q}
	\lesssim_{\gamma,p,r',r,q} \sqrt{T}^d \big\|\big(\frac{g}{\sqrt{T}},f\big)\big\|_{p,r,q}.
      \end{equation}
      
      The two ingredients for \eqref{Num:703} are the following norm relations, to be established below:
      \begin{align}
      &\sup_{x} \Big\langl \Big(\fint_{\Boule_1(x)} |h|^q \Big)^{\frac{2}{q}} \Big\rangl^{\frac{1}{2}} =: \|h\|_{\infty,2,q}
      \lesssim\|h\|_{2,2,q} \quad\mbox{and}
      \label{Ann_ap35}
      \\
      &
      \left\{
      \begin{aligned}
        &\|\omega_T h\|_{2,r,q}\lesssim\sqrt{T}^{\max\{0,d(\frac{1}{2}-\frac{1}{p})\}}\|\omega_{4T} h\|_{p,r,q},
	\\
	&\|\omega_{\frac{T}{4}}h\|_{p,r',q}\lesssim\sqrt{T}^\frac{d}{p}\|\omega_{T} h\|_{\infty,r',q}.
      \end{aligned}
      \right.
      \label{Ann_ap38}
      \end{align}
      The merit of passing to the spatial exponent $p=\infty$ is that by duality,
      \begin{align}
	\|h\|_{\infty,r',q}
	=~&\sup_{\langl|F|^\frac{2r'}{r'-2}\rangl\le 1}\|Fh\|_{\infty,2,q}.
	\label{Ann_ap36}
      \end{align}
      Estimate \eqref{Ann_ap35} is immediate from \eqref{Ann_ap39}. 
      In the case of $p\ge 2$, the first estimate in \eqref{Ann_ap38} follows from appealing to the relation $\omega_T=\omega_{4T}^2$, \textit{cf.} \eqref{Ann_ap21}, which allows to use the H\"older inequality, so that it reduces to the obvious $\|\omega_{4T}\|_{\frac{2p}{p-2},\infty,\infty}$ ${\lesssim\sqrt{T}^{d(\frac{1}{2}-\frac{1}{p})}}$ (recall $T \geq 1$). 
      In the case of $p\le 2$, we appeal to \eqref{Ann_ap39} and the obvious $\omega_T$ $\le\omega_{4T}$. For the second estimate in \eqref{Ann_ap38}, we start from $\omega_{T/4}=\omega_T^2$, use the H\"older inequality, and $\|\omega_{T}\|_{p,\infty,\infty} \lesssim\sqrt{T}^{\frac{d}{p}}$.

      Turning now to \eqref{Num:703}, we define the exponent $s'>2$ by
      \begin{equation}\label{Num:751}
        \frac{1}{s'}:=\frac{1}{2}-\frac{1}{r'}+\frac{1}{r}=\frac{r'-2}{2r'}+\frac{1}{r}.
      \end{equation}
      We specify the constant $C$ in \eqref{Ann_ap21} to be the one of Step 3 belonging to $q$ and~$s'$.
      Let $F$ be an auxiliary random variable.
      Since it does not depend on space, \eqref{Ann_ae1} is preserved by multiplication with $F$. 
      Hence by \eqref{Num:702} followed by the H\"older inequality in probability (based on the definition \eqref{Num:751} of $s'$) we obtain
      \begin{align*}
      \big\|\omega_{T}F \big(\frac{u}{T},\nabla u\big)\big\|_{2,2,q}
      &\lesssim
      \|\omega_T F \big(\frac{g}{\sqrt{T}},f\big)\|_{2,s',q}
      \le
      \big\|\omega_T \big(\frac{g}{\sqrt{T}},f\big)\big\|_{2,r,q}
      \langl|F|^{\frac{2r'}{r'-2}}\rangl^\frac{r'-2}{2r'}.
      \end{align*}
      In combination with \eqref{Ann_ap35}, this yields 
      \begin{align*}
      \big\|\omega_{T}F \big(\frac{u}{\sqrt{T}},\nabla u\big)\big\|_{\infty,2,q}
      \lesssim
      \big\|\omega_T \big(\frac{g}{\sqrt{T}},f\big)\big\|_{2,r,q}
      \langl|F|^{\frac{2r'}{r'-2}}\rangl^\frac{r'-2}{2r'},
      \end{align*}
      so that by \eqref{Ann_ap36}
      \begin{align*}
      \big\|\omega_{T}\big(\frac{u}{\sqrt{T}},\nabla u\big)\big\|_{\infty,r',q}\lesssim
      \big\|\omega_T \big(\frac{g}{\sqrt{T}},f\big)\big\|_{2,r,q},
      \end{align*}
      which by \eqref{Ann_ap38} implies, using $\max\{0,d(\frac{1}{2}-\frac{1}{p})\}+\frac{d}{p} \leq d$,
      \begin{align*}
      \|\omega_{\frac{T}{4}}\big(\frac{u}{\sqrt{T}},\nabla u\big)\|_{p,r',q}
      \lesssim\sqrt{T}^{d}
      \|\omega_{4T} \big(\frac{g}{\sqrt{T}},f\big)\|_{p,r,q}.
      \end{align*}
      Since by the definition \eqref{Ann_ap20} and the properties of $\omega_T$ we have
      \begin{align*}
      \|\omega_T(\cdot-z)h\|_{p,r,q}\sim
      \Big(\int\Big(\omega_T(x-z)\big\langle\big(\fint_{\Boule_1(x)}|h|^q\big)^\frac{r}{q}\big\rangle^\frac{1}{r}\Big)^pdx
      \Big)^\frac{1}{p},
      \end{align*}
      the desired \eqref{Num:703} follows from taking the $L^p$-norm in the shift $z$.
      
    \parag{Step 5: Conclusion}
      Let $p \in (1,\infty)$.
      By duality and Jensen's inequality, it suffices to establish \eqref{Ann_ee2*} for $2 \leq r' < r < \infty$.
      We set $q:=\min\{p,r\}$ and select an $r'<s<r$. Appealing to Lemma \ref{Lem_AnnealedCZ_2}(i) we have
      \begin{align*}
      \|(\frac{u}{\sqrt{T}},\nabla u)\|_{p,r'}\lesssim\|(\frac{u}{\sqrt{T}},\nabla u)\|_{p,s,q}
      +\|(\frac{g}{\sqrt{T}},f)\|_{p,s}.
      \end{align*}
      Estimating the first and the second r.~h.~s.~term by \eqref{Num:703} (with $r'$ replaced by $s$) and by \eqref{Num:03_bis}, respectively, we obtain
      \begin{align*}
      \|(\frac{u}{\sqrt{T}},\nabla u)\|_{p,r'}\lesssim\sqrt{T}^d\|(\frac{g}{\sqrt{T}},f)\|_{p,r,q}
      +\|(\frac{g}{\sqrt{T}},f)\|_{p,r}.
      \end{align*}
      By definition of $q$, we may use \eqref{Num:706} on the first r.~h.~s.~term and so obtain \eqref{Ann_ee2*}, recalling that $T\ge 1$.
    \end{proof}

    \subsection{Proof of the non-perturbative CZ estimates}\label{Sec:ProofPropMassCZ}
    This section contains the proofs of Propositions \ref{PropmassCZ} and \ref{PropCZ}(ii).

    \begin{proof}[Proof of Proposition \ref{PropmassCZ}]
      As announced in Section \ref{Sec:Strat}, the proof is divided into four parts.
      
      \parag{Part 1: Splitting of $(u,\nabla u)$}
      This part is at the core of our argument.
      Let  $1 \leq \tau \leq T$.
      We recall the splitting $(u,\nabla u)=(v_>,h_>)+(v_<,h_<)$ from \eqref{Ann_ae12}.
      Then, for exponents $p, r \in (1,\infty)$,
      \begin{align}\label{Ann_orderExp}
	    &1 \leq s'''' < s''' < s''  < s' <r, \et \quad 1\leq r''<r'<r,
      \end{align}
      we claim that the following estimates hold:
      \begin{align}
	\label{Ann_ae141}
	\big\|\big(\frac{v_>}{\sqrt{T}}, h_>\big)\big\|_{p,r''} 
	&\lesssim \barC_{p,r',r}(\tau) 
	\big\|\big(\frac{g}{\sqrt{T}} ,f\big)\big\|_{p,r},
	\\
	\big\|\big( \frac{v_<}{\sqrt{T}}, h_<\big)\big\|_{p,s''''} 
	&\lesssim \barC_{p,s''',s''}(\tau) \sqrt{\tau}^{-\frac{1}{2}} \barC_{p,s',r}(T) \big\|\big(\frac{g}{\sqrt{T}} ,f\big)\big\|_{p,r}.
	\label{Ann_ae17}
      \end{align}
      
      We note that if all the stochastic exponents in \eqref{Ann_ae141} and \eqref{Ann_ae17} were equal to $r$, then we could deduce from \eqref{Ann_ae141} and \eqref{Ann_ae17} that
      \begin{align*}
	\big\|\big( \frac{u}{\sqrt{T}},\nabla u\big)\big\|_{p,r} \lesssim \big(\barC_{p,r,r}(\tau) +\barC_{p,r,r}(\tau) \sqrt{\tau}^{-\frac{1}{2}} \barC_{p,r,r}(T)\big) \big\|\big(\frac{g}{\sqrt{T}} ,f\big)\big\|_{p,r},
      \end{align*}
      which amounts to
      \begin{align}\label{Ann_ee17}
	\barC_{p,r,r}(T) \lesssim \barC_{p,r,r}(\tau) +\barC_{p,r,r}(\tau) \sqrt{\tau}^{-\frac{1}{2}} \barC_{p,r,r}(T).
      \end{align}
      Thus, if we would know that $\barC_{p,r,r}(\tau) \lesssim \sqrt{\tau}^{\frac{1}{2} \theta}$ for some $\theta<1$, then we would obtain from the above estimate that $\barC_{p,r,r}(T)$ is uniformly bounded in $T \geq 1$.
      (This motivates assumption \eqref{Ann_ee7} of Part 2.)
      However, the stochastic exponents have to be strictly ordered as in \eqref{Ann_orderExp}.
      Therefore, in Part 2, we resort to real interpolation to increase the stochastic exponent of $\nabla u$, and we buckle with an estimate similar to \eqref{Ann_ee17}.

      \parag{Part 1, Step 1: Argument for \eqref{Ann_ae141}}
      We first estimate the high-pass part $(v_>,h_>)$. 
      The right-hand sides of \eqref{Ann_ae10_bis} and  \eqref{Ann_ae11_bis} may be expressed in a more convenient form thanks to \eqref{Def_u_tau}, \eqref{Ann_ae6}, and \eqref{Ann_ae7}, namely
      \begin{align}
	\frac{1}{T} \ubar_> - \nabla \cdot \abar_\tau \nabla \ubar_>
	&=\big(1-\frac{\tau}{T} \big) \big( \nabla \cdot ( a \nabla u_> + f) + \frac{1}{T}g\big),
	\label{Ann_ae10}
	\\
	\label{Ann_ae11}
	\frac{1}{T} \ubar_< - \nabla \cdot \abar_\tau \nabla \ubar_<
	&=\big(1-\frac{\tau}{T} \big)
	\nabla \cdot\big(a \nabla w +( \phi_{\tau,i} a  -\sigma_{\tau,i}) \nabla \partial_i \ubar  +\frac{1}{\tau} \partial_i \ubar \psi_{\tau,i}  \big).
      \end{align}
      By equation \eqref{Def_u_tau} (recall that $\tau \leq T$), we obtain
      \begin{equation}
        \label{Num:902}
        \big\|\big(\frac{u_>}{\sqrt{\tau}},\nabla u_>\big) \big\|_{p,r'} 
	  \overset{\eqref{Ann_ee5}}{\leq} 
	  \barC_{p,r',r}(\tau) \|(\frac{\sqrt{\tau} g}{T} ,f)\|_{p,r}
	  \leq 
	  \barC_{p,r',r}(\tau) \big\|\big(\frac{g}{\sqrt{T}} ,f\big)\big\|_{p,r},
      \end{equation} 
      which by \eqref{Num:03_bis} implies
      \begin{equation}
	  \big\|\big(\frac{u_>}{\sqrt{T}},\nabla u_>\big)\big\|_{p,r''}
	  \lesssim
	  \barC_{p,r',r}(\tau) \big\|\big(\frac{g}{\sqrt{T}} ,f\big)\big\|_{p,r}.
      \label{Ann_ae13}
      \end{equation}
      By estimate \eqref{Ann_ap56} on the massive correctors and recalling \eqref{Ann_ae10}, we have
      \begin{equation}
        \begin{aligned}
          \big\|\big(\frac{\ubar_>}{\sqrt{T}}, \partial_i \ubar_> (e_i + \nabla \phi_{\tau,i})\big)\big\|_{p,r''} 
	  &\overset{\eqref{Ann_ap56}}{\lesssim}
	  \big\|\big(\frac{\ubar_>}{\sqrt{T}},\nabla \ubar_>\big)\big\|_{p,r'}
	  \\
	  &\overset{\eqref{Ann_eee23}}{\lesssim}
	  \big\|\big(\frac{g}{\sqrt{T}},f,\nabla u_>\big)\big\|_{p,r'}
	  \\
	  &\overset{\eqref{Num:902}}{\lesssim} \barC_{p,r',r}(\tau) \big\|\big(\frac{g}{\sqrt{T}} ,f\big)\big\|_{p,r}.
        \end{aligned}
	\label{Ann_ae14}        
      \end{equation}
      In view of \eqref{Ann_ae12}, estimates \eqref{Ann_ae13} and \eqref{Ann_ae14} combine to \eqref{Ann_ae141}.

      \parag{Part 1, Step 2: Argument for \eqref{Ann_ae17}}
      We now turn to the low-pass contribution $(v_<,h_<)$. 
      We start with its first constituent $w$ (\textit{cf.} \eqref{Ann_ae12}). Appealing to equation \eqref{Ann_ae7} and the correctors estimate \eqref{Ann_ap57}, we obtain (recall that $\tau \leq T$)
      \begin{equation}
      \begin{aligned}
	\big\|\big(\frac{w}{\sqrt{T}},\nabla w\big)\big\|_{p,s'''}
	\leq~~& \|(\frac{w}{\sqrt{\tau}},\nabla w)\|_{p,s'''} 
	\\
	\overset{\eqref{Ann_ee5}}{\lesssim}~&
	\barC_{p,s''',s''}(\tau) 
	\Big\|\Big( \big|(\frac{\psi_{\tau}}{\sqrt{\tau}},\phi_\tau)\big| \frac{\nabla \ubar }{\sqrt{\tau}},|(\phi_{\tau},\sigma_{\tau})|\nabla^2 \ubar \Big)\Big\|_{p,s''}
	\\
	\overset{\eqref{Ann_ap57}}{\lesssim}~& \barC_{p,s''',s''}(\tau) \mu_d(\sqrt{\tau})
	\big\|\big(\frac{\nabla \ubar }{\sqrt{\tau}},\nabla^2 \ubar \big)\big\|_{p,s'}.
      \end{aligned}
      \label{Ann_ae15}
      \end{equation} 
      Turning to the second constituent $\ubar_<$ of $(v_<,h_<)$, we obtain once more by \eqref{Ann_ap56}:
      \begin{equation*}
	\| \partial_i \ubar_<(e_i +\nabla \phi_{\tau,i}) \|_{p,s''''} 
	\lesssim \| \nabla \ubar_<\|_{p,s'''}.
      \end{equation*}
      Therefore, recalling equation \eqref{Ann_ae11} and the correctors estimate \eqref{Ann_ap57}, we thus have
      \begin{align}
	\begin{aligned}
	\big\|\big(\frac{\ubar_<}{\sqrt{T}}, \partial_i \ubar_< (e_i + \nabla \phi_{\tau,i})\big)\big\|_{p,s''''} 
	&\overset{\eqref{Ann_eee23}}{\lesssim} \Big\| \big(\nabla w ,\frac{|\psi_{\tau}|}{\sqrt{\tau}}\frac{\nabla \ubar }{\sqrt{\tau}},|( \phi_{\tau},\sigma_{\tau})| \nabla^2 \ubar\big) \Big\|_{p,s'''}
	\\
	&\overset{\eqref{Ann_ap57}}{\lesssim} \| \nabla w \|_{p,s'''} +\mu_d(\sqrt{\tau}) \big\|\big(\frac{\nabla \ubar }{\sqrt{\tau}},\nabla^2 \ubar \big)\big\|_{p,s''}
	\\
	&\overset{\eqref{Ann_ae15}}{\lesssim} 	\barC_{p,s''',s''}(\tau) \mu_d(\sqrt{\tau})
	\big\|\big(\frac{\nabla \ubar }{\sqrt{\tau}},\nabla^2 \ubar \big)\big\|_{p,s'}.
	\end{aligned}
	\label{Ann_ae16}
      \end{align}
      Last, the third constituent of $(v_<,h_<)$ is estimated by appealing to the correctors estimate \eqref{Ann_ap57}:
      \begin{equation}
        \label{Ann_ae16_1}
        \big\|\big(\frac{\phi_{\tau,i} \partial_i \ubar }{\sqrt{T}},\phi_{\tau,i} \nabla \partial_i \ubar \big)\big\|_{p,s''''} 
        \overset{\tau \leq T}{\lesssim} \mu_d(\sqrt{\tau}) \big\|\big(\frac{\nabla \ubar }{\sqrt{\tau}},\nabla^2 \ubar \big)\big\|_{p,s'}.
      \end{equation} 
      By definition \eqref{Ann_ae12} of $(v_<,h_<)$, gathering \eqref{Ann_ae15}, \eqref{Ann_ae16}, and \eqref{Ann_ae16_1} yields
      \begin{align*}
	\big\|\big(\frac{v_<}{\sqrt{T}},h_<\big)\big\|_{p,s''''}
	&\lesssim \barC_{p,s''',s''}(\tau) \mu_d(\sqrt{\tau})
	\big\|\big(\frac{\nabla \ubar }{\sqrt{\tau}},\nabla^2 \ubar \big)\big\|_{p,s'}
	\\
	&\lesssim \barC_{p,s''',s''}(\tau) \frac{\mu_d(\sqrt{\tau})}{\sqrt{\tau}} \| \nabla u\|_{p,s'},
      \end{align*}
      where we used the annealed estimate \eqref{Ann_eee23} for the massive equation with constant coefficients \eqref{Ann_ae5} in its differentiated form:
      \begin{equation*}
        \frac{1}{\tau} \partial_i \ubar  - \nabla \cdot \abar_\tau \nabla \partial_i \ubar = \big(\frac{1}{\tau}-\frac{1}{T}\big)\partial_iu.
      \end{equation*}
      Finally, using \eqref{Ann_ee5} and recalling \eqref{Defmu} establishes \eqref{Ann_ae17}.

    \parag{Part 2: Real interpolation, from sub-optimal to optimal estimates}      
      Let $p$, $r$, $s''''$, $s'''$, $s''$, $s'$, $r''$, $r'$ be as in Part 1 (\textit{i.e.} satisfying \eqref{Ann_orderExp}).
      Assume that there exists $\theta \in (0,1)$ such that
      \begin{align}\label{Ann_ap43}
	\frac{1}{s'}\;>\frac{1}{s}:=\;\theta\frac{1}{s''''}+(1-\theta)\frac{1}{r''}.
      \end{align}
      (See Figure \ref{Figure_indices}.)
      Suppose that we sub-optimally control the massive CZ constants 
      both for high $(r',r)$ and low $(s''',s'')$ stochastic integrability, in the sense of
      \begin{align}
	\barC_{p,r',r}(\tau)
	+\barC_{p,s''',s''}(\tau)
	\le\Lambda\sqrt{\tau}^{\frac{1}{2}\theta}&\pourtout \tau\ge 1
	\label{Ann_ee7}
      \end{align}
      for some fixed constant $\Lambda\ge 1$.
      We claim the following control of the CZ constant:
      \begin{align}\label{Ann_ee8}
	\barC_{p,s',r}(T)\lesssim \Lambda^{\frac{1}{1-\theta}} &\pourtout  T \geq 1.
      \end{align}
      
      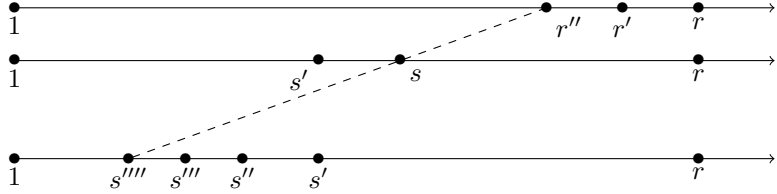
\begin{figure}[h]
	\begin{tikzpicture}
      \def\yS{0};
      \def\yT{1.3};
      \def\yR{2};
    
      \coordinate (1S) at (0,\yS);
      \coordinate (1T) at (0,\yT);
      \coordinate (1R) at (0,\yR);
      \coordinate (2S) at (10,\yS);
      \coordinate (2T) at (10,\yT);
      \coordinate (2R) at (10,\yR);
      \coordinate (S4) at (1.5,\yS);
      \coordinate (S3) at (2.25,\yS);
      \coordinate (S2) at (3,\yS);
      \coordinate (S1) at (4,\yS);
      \coordinate (S0) at (9,\yS);
      \coordinate (T1) at (4,\yT);
      \coordinate (T2) at (5.075,\yT);
      \coordinate (T0) at (9,\yT);
      \coordinate (R2) at (7,\yR);
      \coordinate (R1) at (8,\yR);
      \coordinate (R0) at (9,\yR);
      
      \draw[->] (1S)--(2S);
      \draw[->] (1T)--(2T);
      \draw[->] (1R)--(2R);
      
      \draw (1S) node {$\bullet$};
      \draw (1S) node [below] {$1$};
      \draw (1T) node {$\bullet$};
      \draw (1T) node [below] {$1$};
      \draw (1R) node {$\bullet$};
      \draw (1R) node [below] {$1$};
      
      \draw (S4) node {$\bullet$};
      \draw (S4) node [below] {$s''''$};
      \draw (S3) node {$\bullet$};
      \draw (S3) node [below] {$s'''$};
      \draw (S2) node {$\bullet$};
      \draw (S2) node [below] {$s''$};
      \draw (S1) node {$\bullet$};
      \draw (S1) node [below] {$s'$};
      \draw (S0) node {$\bullet$};
      \draw (S0) node [below] {$r$};
      
      \draw (R2) node {$\bullet$};
      \draw (R2) node [below right] {$r''$};
      \draw (R1) node {$\bullet$};
      \draw (R1) node [below] {$r'$};
      \draw (R0) node {$\bullet$};
      \draw (R0) node [below] {$r$};

      \draw (T2) node {$\bullet$};
      \draw (T2) node [below right] {$s$};
      \draw (T1) node {$\bullet$};
      \draw (T1) node [below left] {$s'$};
      \draw (T0) node {$\bullet$};
      \draw (T0) node [below] {$r$};
      
      \draw [dashed] (R2)--(S4);
      
    \end{tikzpicture}
	\caption{Exponents $p, r, s, s'''', s''', s'', s', r'', r'$. \label{Figure_indices}}
      \end{figure} 
      
      The form of the result arises from a real interpolation argument.
      We make use of the $K$-method \cite[Chap.\ 3, p.\ 38]{BerghLofstrom}, which relies on the one-parameter family of splittings  of $(u,\nabla u)$ studied in the Part 1 into a part $(v_>,h_>)$ with good stochastic integrability (slightly worse than $r'$) and a part $(v_<,h_<)$ with a bad one (slightly worse than $s'''$).
      Here the length scale $\sqrt{\tau}$ plays the role of the parameter.

      %\medskip
      
      \parag{Part 2, Step 1: Argument for  \eqref{Ann_ee8}}
      We now fix $T\geq 1$.
      By the $K$-method, in view of \eqref{Ann_ap43}, we have
      \begin{equation}\label{Ann_interpolv}
	\begin{aligned}
	  &\big\|\big(\frac{u}{\sqrt{T}},\nabla u \big)\big\|_{p,s'} 
	  \\
	  &\quad \lesssim \sup_{\mu>0} 
	  \inf_{\scriptsize{\begin{array}{c} v_>+v_<=u \\ h_>+h_<=\nabla u \end{array}}}
	  \Big\{\mu^{-\theta} \big\|\big(\frac{v_>}{\sqrt{T}},h_> \big)\big\|_{p,r''} + \mu^{1-\theta} \big\|\big(\frac{v_<}{\sqrt{T}},h_< \big)\big\|_{p,s''''} \Big\},
	\end{aligned}
      \end{equation}
      (indeed, by \cite[Th.\ 5.1.2 \& Th.\ 5.2.1]{BerghLofstrom} the above r.~h.~s.\ corresponds to a norm on the functional space $\LL^p_{\R^d}(\LL^{s,\infty}_{\langl\cdot\rangl})$, and the Lorentz space $\LL^{s,\infty}_{\langl\cdot\rangl}$ is included in $\LL_{\langl\cdot\rangl}^{s'}$).
      Next, we will show in Part 2, Step 2 that for any $\mu>0$ and $T \geq 1$ there is a splitting $(u,\nabla u)=(v_>,h_>)+(v_<,h_<)$ such that 
      \begin{equation}\label{Ann_anycase}
	\begin{aligned} 
	  \mu^{-\theta} \big\|\big(\frac{v_>}{\sqrt{T}},h_>\big)\big\|_{p,r''} + \mu^{1-\theta} \big\|\big(\frac{v_<}{\sqrt{T}},h_<\big)\big\|_{p,s''''}
	  \lesssim \Lambda \barC_{p,s',r}^\theta(T) \big\|\big(\frac{g}{\sqrt{T}},f\big)\big\|_{p,r}.
	\end{aligned}
      \end{equation}
      Then, inserting \eqref{Ann_anycase} into \eqref{Ann_interpolv} entails
      \begin{equation*}
        \barC_{p,s',r}(T) \lesssim \Lambda \barC^{\theta}_{p,s',r}(T),
      \end{equation*}
      and since $\barC_{p,s',r}(T)$ is finite by Lemma \ref{Lem_SsOpti}, we obtain \eqref{Ann_ee8}.
      
      \parag{Part 2, Step 2: Argument for \eqref{Ann_anycase}}
      For given $\mu>0$, we define $\tau$ by
      \begin{align*}
	\mu=\frac{\sqrt{\tau}^{\frac{1}{2}}}{\barC_{p,s',r}(T)}.
      \end{align*}
      Thus we have to estimate
      \begin{equation}
        \begin{aligned}
          &\mu^{-\theta} \big\|\big(\frac{v_>}{\sqrt{T}},h_>\big)\big\|_{p,r''} + \mu^{1-\theta} \big\|\big(\frac{v_<}{\sqrt{T}},h_<\big)\big\|_{p,s''''}
	  \\
	  &\quad= \barC_{p,s',r}^\theta(T) \Big( \sqrt{\tau}^{-\frac{1}{2}\theta}\big\|\big(\frac{v_>}{\sqrt{T}},h_>\big)\big\|_{p,r''} + \frac{\sqrt{\tau}^{\frac{1}{2}(1-\theta)}}{\barC_{p,s',r}(T)} \big\|\big(\frac{v_<}{\sqrt{T}},h_<\big)\big\|_{p,s''''}\Big),
        \end{aligned}
        \label{Ann_ee18}
      \end{equation}
      and distinguish the three cases: either $\tau\leq 1$, $1 \leq \tau \leq T$, or $T \leq \tau$.
      
      If $\tau \leq 1$, we set
	$(v_>,h_>) = (0,0)$ and $(v_<,h_<)=(u,\nabla u)$
      so that by equation \eqref{Ann_ae1}
      \begin{equation*}
	\begin{aligned}
	\big\|\big(\frac{v_<}{\sqrt{T}},h_<\big)\big\|_{p,s''''} 
	\overset{\eqref{Num:03_bis}}{\leq} \big\|\big(\frac{u}{\sqrt{T}},\nabla u\big) \big\|_{p,s'}
	\overset{\eqref{Ann_ee5}}{\lesssim} \barC_{p,s',r}(T) \big\|\big(\frac{g}{\sqrt{T}},f\big)\big\|_{p,r}.
	\end{aligned}
      \end{equation*}
      Inserting this estimate into \eqref{Ann_ee18} establishes \eqref{Ann_anycase} since $\tau\leq 1 \leq \Lambda$.
      
      In the generic case $1 \leq \tau \leq T$, we define $(v_>,h_>)$ and $(v_<,h_<)$ by \eqref{Ann_ae12}.
      Therefore, \eqref{Ann_ae141} and \eqref{Ann_ae17} combined with our assumption \eqref{Ann_ee7} yield
      \begin{equation*}
	\left\{
	\begin{aligned}
	  \big\|\big(\frac{v_>}{\sqrt{T}},h_>\big)\big\|_{p,r''} 
	  &\lesssim
	  \Lambda \sqrt{\tau}^{\frac{1}{2}\theta}\big\|\big(\frac{g}{\sqrt{T}} ,f\big)\big\|_{p,r},
	  \\
	  \big\|\big(\frac{v_<}{\sqrt{T}},h_<\big)\big\|_{p,s''''} 
	  &\lesssim
	  \Lambda\sqrt{\tau}^{\frac{1}{2}(\theta-1)} \barC_{p,s',r}(T) \big\|\big(\frac{g}{\sqrt{T}} ,f\big)\big\|_{p,r}.
	\end{aligned}
	\right.
      \end{equation*}
      Inserting this into \eqref{Ann_ee18} entails \eqref{Ann_anycase}.
      
      Finally, if $\tau \geq T$ we set $(v_>,h_>)=(u,\nabla u)$ and $(v_<,h_<)=(0,0)$ to the effect of
      \begin{align*}
	\big\|\big(\frac{v_>}{\sqrt{T}},h_>\big)\big\|_{p,r''} \leq \|\big(\frac{u}{\sqrt{T}},\nabla u\big)\|_{p,r'} \lesssim \barC_{p,r',r}(T) \|\big(\frac{g}{\sqrt{T}},f\big)\|_{p,r}.
      \end{align*}
      Then, invoking \eqref{Ann_ee7} (with $\tau$ replaced by $T$) and recalling that $T \leq \tau$, we obtain
      \begin{align*}
	\big\|\big(\frac{v_>}{\sqrt{T}},h_>\big)\big\|_{p,r''}
	\lesssim \Lambda \sqrt{T}^{\frac{1}{2} \theta} \|\big(\frac{g}{\sqrt{T}},f\big)\|_{p,r} 
	\leq \Lambda \sqrt{\tau}^{\frac{1}{2} \theta} \|\big(\frac{g}{\sqrt{T}},f\big)\|_{p,r}.
      \end{align*}
      Inserting this into \eqref{Ann_ee18} and yields \eqref{Ann_anycase} also in this case.

      \parag{Part 3: Complex interpolation, from optimal estimates to suboptimal ones for higher integrability}
	We consider two triplets  of exponents $(p,r',r)$ and $(q,s',s) \in (1,\infty)\times [2,\infty)^2$ that satisfy
	\begin{align}\label{Ann_ap94_ter}
	s'\le r',\quad s\le r,\quad \et \quad
	\frac{1}{r'}-\frac{1}{r}>\frac{1}{s'}-\frac{1}{s}>0.
	\end{align}
	For any $\theta\in(0,1]$ such that
	\begin{align}\label{Ann_ap94_quad}
	\theta>\max\Big\{1-\frac{s}{r},1-\frac{s'}{r'},1-\frac{q}{p},
	\frac{\frac{1}{p}-\frac{1}{q}}{1-\frac{1}{q}}\Big\}
	\end{align}
	we claim that
	\begin{align}\label{Ann_ap90bis}
	\barC_{p,r',r}(\tau)\lesssim \barC_{q,s',s}^{1-\theta}(\tau)\sqrt{\tau}^{d\theta}
	\pourtout \tau\ge 1.
	\end{align}
	Note that the r.~h.~s.~of \eqref{Ann_ap94_quad} is strictly less than $1$ so that such a $\theta$ always exists.

	Here comes the argument: We first note that thanks to the constraint \eqref{Ann_ap94_quad} on $\theta$, the identities
	\begin{align}\label{fs01}
	\frac{1}{r}=(1-\theta)\frac{1}{s}+\theta\frac{1}{\tilde r},\quad
	\frac{1}{r'}=(1-\theta)\frac{1}{s'}+\theta\frac{1}{\tilde r'},\quad
	\frac{1}{p}=(1-\theta)\frac{1}{q}+\theta\frac{1}{\tilde p}
	\end{align}
	define a triple of exponents $(\tilde p,\tilde r',\tilde r)\in(1,\infty)\times[2,\infty)^2$.
	Indeed, in case of $\tilde r$, we rewrite the implicit \eqref{fs01} as the explicit $\frac{1}{\tilde r}$ $=\frac{1}{\theta}(\frac{1}{r}-(1-\theta)\frac{1}{s})$, and note that this expression is positive thanks to the first constraint on $\theta$ in \eqref{Ann_ap94_quad}, rewritten as $1-\theta<\frac{s}{r}$. 
	We also have $\frac{1}{\theta}(\frac{1}{r}-(1-\theta)\frac{1}{s})$ $\le\frac{1}{2}$, or equivalently, $\frac{1}{s}-\frac{1}{r}$ $\ge\theta(\frac{1}{s}-\frac{1}{2})$ because of our assumption $2\le s\le r$, see \eqref{Ann_ap94_ter}. 
	The case of $\tilde r'$ is treated analogously. 
	Finally, in the case of $\tilde p$, the lower bound $\frac{1}{\tilde p}$ $=\frac{1}{\theta}(\frac{1}{p}-(1-\theta)\frac{1}{q})>0$ follows as for the two others. 
	The upper bound $\frac{1}{\theta}(\frac{1}{p}-(1-\theta)\frac{1}{q})$ $<1$, which is equivalent to $\frac{1}{p}-\frac{1}{q}<\theta(1-\frac{1}{q})$, follows from the last of the four constraints in \eqref{Ann_ap94_quad}.

	Note that by the third item of \eqref{Ann_ap94_ter} we have $\tilde{r}'< \tilde{r}$, so that by Lemma \ref{Lem_SsOpti} there holds $\barC_{\tilde{p},\tilde{r}',\tilde{r}}(\tau) \lesssim \sqrt{\tau}^d$.
	On the other hand, by complex interpolation, there holds $\barC_{p,r',r}(\tau)$ $\le \barC_{q,s',s}^{1-\theta}(\tau)$ $ \barC_{\bar p,\tilde{r}',\tilde{r}}^{\theta}(\tau)$.
	This yields \eqref{Ann_ap90bis}.
	
	\begin{figure}[h]
	  \begin{center}
	  \begin{tikzpicture}
	    \newcommand{\rayon}{1};
	    \newcommand{\prof}{0};
	  
	    \coordinate (A0) at (0*\rayon,\prof);
	    \coordinate (A1) at (10*\rayon,\prof +0);
	    \coordinate (A2) at (1*\rayon,\prof +0);
	    \coordinate (A3) at (4*\rayon,\prof +0);
	    \coordinate (A4) at (6*\rayon,\prof +0);
	    
	    \coordinate (A11) at (2.5*\rayon,\prof -0.5);
	    \coordinate (A12) at (5*\rayon,\prof -0.5);
	    
	    \coordinate (A21) at (1*\rayon,\prof -0.25);
	    \coordinate (A22) at (4*\rayon,\prof -0.25);
	    \coordinate (A23) at (6*\rayon,\prof -0.25);
	    
	    \draw (A0)--(A1);
	    \foreach \j in {0,1,...,4}
	    {
	      \draw (A\j) node {$\bullet$};
	    }
	    \draw (A0) node[above] {$0$};
	    \draw (A1) node[above] {$1$};
	    \draw (A2) node[above] {$\frac{1}{q}$};
	    \draw (A3) node[above] {$\frac{1}{p}$};
	    \draw (A4) node[above] {$\frac{1}{\tilde{p}}$};
	    \draw (A11) node {$\theta$};
	    \draw (A12) node {$1-\theta$};
	    
	    \draw[<->] (A21)--(A22);
	    \draw[<->] (A22)--(A23);

	    \renewcommand{\rayon}{0.5};
	    \renewcommand{\prof}{-1.5};
	  
	    \coordinate (A0) at (0*\rayon,\prof);
	    \coordinate (A1) at (10*\rayon,\prof +0);
	    \coordinate (A2) at (4*\rayon,\prof +0);
	    \coordinate (A3) at (6*\rayon,\prof +0);
	    \coordinate (A4) at (9*\rayon,\prof +0);
	    
	    \coordinate (A11) at (5*\rayon,\prof -0.5);
	    \coordinate (A12) at (7.5*\rayon,\prof -0.5);
	    
	    \coordinate (A21) at (4*\rayon,\prof -0.25);
	    \coordinate (A22) at (6*\rayon,\prof -0.25);
	    \coordinate (A23) at (9*\rayon,\prof -0.25);
	    
	    \draw (A0)--(A1);
	    \foreach \j in {0,1,...,4}
	    {
	      \draw (A\j) node {$\bullet$};
	    }
	    \draw (A0) node[above] {$0$};
	    \draw (A1) node[above] {$\frac{1}{2}$};
	    \draw (A2) node[above] {$\frac{1}{\tilde{r}'}$};
	    \draw (A3) node[above] {$\frac{1}{r'}$};
	    \draw (A4) node[above] {$\frac{1}{s'}$};
	    \draw (A11) node {$1-\theta$};
	    \draw (A12) node {$\theta$};
	    
	    \draw[<->] (A21)--(A22);
	    \draw[<->] (A22)--(A23);
	    
	    \renewcommand{\rayon}{0.5};
	    \renewcommand{\prof}{-3.};
	  
	    \coordinate (A0) at (0*\rayon,\prof);
	    \coordinate (A1) at (10*\rayon,\prof +0);
	    \coordinate (A2) at (3*\rayon,\prof +0);
	    \coordinate (A3) at (5*\rayon,\prof +0);
	    \coordinate (A4) at (8*\rayon,\prof +0);
	    
	    \coordinate (A11) at (4*\rayon,\prof -0.5);
	    \coordinate (A12) at (6.5*\rayon,\prof -0.5);
	    
	    \coordinate (A21) at (3*\rayon,\prof -0.25);
	    \coordinate (A22) at (5*\rayon,\prof -0.25);
	    \coordinate (A23) at (8*\rayon,\prof -0.25);
	    
	    \draw (A0)--(A1);
	    \foreach \j in {0,1,...,4}
	    {
	      \draw (A\j) node {$\bullet$};
	    }
	    \draw (A0) node[above] {$0$};
	    \draw (A1) node[above] {$\frac{1}{2}$};
	    \draw (A2) node[above] {$\frac{1}{\tilde{r}}$};
	    \draw (A3) node[above] {$\frac{1}{r}$};
	    \draw (A4) node[above] {$\frac{1}{s}$};
	    \draw (A11) node {$1-\theta$};
	    \draw (A12) node {$\theta$};
	    
	    \draw[<->] (A21)--(A22);
	    \draw[<->] (A22)--(A23);
	  \end{tikzpicture}
	  \end{center}
	  \caption{Exponents $p, \tilde{p}, q, r, r', \tilde{r}, \tilde{r}', s, s'$ in the case $p \leq q$.}\label{Fig:Expo}
	\end{figure}
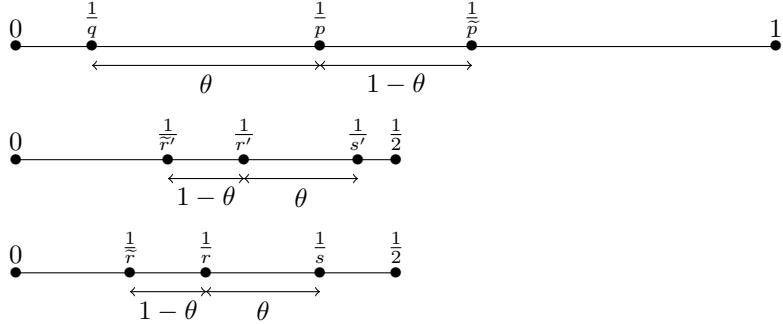

      \parag{Part 4: Conclusion, \textit{i.e.} proof of \eqref{Borne_C}}
	Let us start by rewriting \eqref{Ann_ap94_quad} in terms of $\theta\leadsto\frac{\theta}{2d}$ in the more iterable (but equivalent) form of
	\begin{align}\label{fs02}
	  \begin{array}{c}
	    (1-\frac{\theta}{2d})\frac{2}{s}<\frac{2}{r},\quad
	    (1-\frac{\theta}{2d})\frac{2}{s'}<\frac{2}{r'},\quad
	    (1-\frac{\theta}{2d})\frac{2}{q}<\frac{2}{p},\\[1ex]
	    \mbox{and}\quad
	    (1-\frac{\theta}{2d})(2-\frac{2}{q})<2-\frac{2}{p}.
	  \end{array}
	\end{align}
	In a first step, we apply Part 3 with $(q,s',s)=(2,2,2)$, feeding in the pivotal estimate \eqref{Ann_ap25}, so that \eqref{Ann_ap90bis} assumes the form
	\begin{align}\label{Ann_ap91}
	  \barC_{p,r',r}(\tau)\lesssim\sqrt{\tau}^{\frac{1}{2}\theta}\pourtout \tau\ge 1.
	\end{align}
	In view of \eqref{fs02}, such a $\theta<1$ exists provided the triplet 
	$(p,r',r)\in(1,\infty)\times[2,\infty)^2$ of exponents is sufficiently to
	$(2,2,2)$ in the sense of
	\begin{align}\label{Ann_ap92}
	  2-\big(1-\frac{1}{2d}\big)
	  =1 + \frac{1}{2d}
	  >\frac{2}{p}>1-\frac{1}{2d} \et 1 \ge \frac{2}{r'}>\frac{2}{r} >1-\frac{1}{2d},
	\end{align}
	We take this as input \eqref{Ann_ee7} yielding by \eqref{Ann_ee8} the output
	\begin{align}\label{Ann_ap93}
	\barC_{p,r',r}(\tau)\lesssim 1\pourtout \tau\ge 1
	\end{align}
	in the same range (\ref{Ann_ap92}) of exponents.

	In the second step, we feed (\ref{Ann_ap93}) with $(p,r',r)$ playing the role of $(q,s',s)$ into (\ref{Ann_ap90bis}).
	In view of \eqref{fs02}, this yields \eqref{Ann_ap91} and ultimately \eqref{Ann_ap93} in the extended range of
	\begin{align*}
	2-\big(1-\frac{1}{2d}\big)^2>\frac{2}{p} >\big(1-\frac{1}{2d}\big)^2 \et 1\geq \frac{2}{r'}>\frac{2}{r} >\big(1-\frac{1}{2d}\big)^2.
	\end{align*}
	Obviously, we may reach any arbitrary triplet $(p,r',r)$ of exponents in $(1,\infty) \times [2,\infty)^2$
	with $r'<r$ by a finite number of such steps. 
	By a duality argument and Jensen's inequality we finally obtain \eqref{Borne_C} for any triplet $(p,r',r)$ of exponents in $(1,\infty)^3$ with $1\leq r'<r$.
      \end{proof}

    \begin{proof}[Proof of Proposition \ref{PropCZ}(ii)]
      Given $f$, we consider the sequence of solutions $u_T$ to \eqref{Ann_ae1} for $g=0$.
      By Proposition \ref{PropmassCZ}, such functions have their gradients uniformly bounded in $\LL^p_{\R^d}\big(\LL^{r'}_{\langl\cdot\rangl}\big)$.
      Therefore, as $T\uparrow\infty$, there exists $\nabla u_\infty \in \LL^p_{\R^d}\big(\LL^{r'}_{\langl\cdot\rangl}\big)$ such that, for a subsequence, there holds
      \begin{align*}
        \nabla u_T \rightharpoonup \nabla u_\infty \dans \LL^{p}_{\R^d}(\LL^{r'}_{\langl \cdot \rangl}).
      \end{align*}
      By lower semi-continuity of the norm of $\LL^p_{\R^d}\big(\LL^{r'}_{\langl\cdot\rangl}\big)$, $\nabla u_\infty$ inherits \eqref{Estim13_ter} from the estimates \eqref{Ann_ee5} and \eqref{Borne_C} satisfied by $\nabla u_T$.
      Moreover, by the energy estimate, \textit{cf.} \eqref{Energ}, there holds
      \begin{align*}
        \frac{1}{T} \int  u_T^2 + \int  |\nabla u_T|^2 \lesssim \int  |f|^2,
      \end{align*}
      which implies
      \begin{align*}
        \frac{1}{T} u_T \rightarrow 0 \dans \LL^2(\R^d).
      \end{align*}
      As a consequence, the limit $\nabla u_\infty$ of the sequence $\nabla u_T$ is a weak solution to \eqref{Prop3Defv} (since $\nabla u_\infty$ is square-integrable, it is the only solution).
    \end{proof}

    \subsection{Proof of the stochastic estimates of the massive correctors}\label{Sec_LemMassCorr}
    
	The proof of Lemma \ref{LemMassCorr} follows the same scheme as the proof of Proposition \ref{Propcorr}, with minor modifications due to the presence of the massive term.
	We introduce the abbreviation
	\begin{align}\label{Num:710}
	  \Lambda_r:=\Big\langl \Big( \fint_{\Boule_1} \big| \big(\frac{\phi_T}{\sqrt{T}},\nabla \phi_T \big) \big|^2 \Big)^{r} \Big\rangl^{\frac{1}{r}}
	  \pour r \in [1,\infty).
	\end{align}
	In Step 1, we momentarily fix some deterministic function $g$ and vector field $f$ and we derive a representation formula for the Malliavin derivatives of the random variables $F$ and $F^\star$ defined by
	\begin{equation}\label{Def:F_T}
	  F:=\int \big(-\frac{1}{T}g\phi_{T,j}+f\cdot\nabla\phi_{T,j}\big)
	  \et \quad F^\star:=\int f\cdot(q_{T,j}-\langl q_{T,j}\rangl).
	\end{equation}
	(We henceforth omit the index $j$.)
	Using the spectral gap \eqref{Prop21a}, we establish in Step~2 that these satisfy the following estimate:
	\begin{align}
	  \langl |(F,F^\star)|^{2r}\rangl^\frac{1}{r}
	  \lesssim \Lambda_r \int  \big|\big(\frac{g}{\sqrt{T}}, f \big) \big|^2
	  \quad \text{provided}\; r \gg 1.
	  \label{Massive_Step1}
	\end{align}
	In Step 3, we argue that, if the stationary vector fields $u$, $f$, and $g$ are related through \eqref{Ann_ae1},
	then there exists $\theta=\theta(d,\lambda) >0$ such that, for all $R>0$, the following annealed Caccioppoli estimate holds
	\begin{equation}
	  \label{Num:730}
	  \Big\langl \Big( \fint_{\Boule_R} \big|\big( \frac{u}{\sqrt{T}},\nabla u \big)\big|^2 \Big)^r \Big\rangl
	  \lesssim
	  \Big\langl \big|\big( \frac{g}{\sqrt{T}},f \big)\big|^{2r} \Big \rangl
	  + \Big\langl  \Big| \fint_{\Boule_{\theta R}} \big( \frac{u}{\sqrt{T}},\nabla u \big) \Big|^{2r} \Big \rangl.
	\end{equation}
	Then, in Step 4, we buckle on Step 2 and Step 3 (where $\phi_T$ plays the role of $u$) to obtain
	\begin{align}
	  \label{Massive_Step3}
	  \Lambda_r \lesssim 1,
	\end{align}
	which yields that $\nabla \phi_T$ satisfies \eqref{Ann_ap56} by local regularity.
	In Step 5, we argue that \eqref{Massive_Step1} and \eqref{Massive_Step3} imply
	\begin{align}
	  \label{Ann_as14}
	  \Big\langl\Big|\int_{\Boule_1}\big(\phi_T,\sigma_T,\frac{\psi_T}{\sqrt{T}}\big)\Big|^{2r}\Big\rangl^\frac{1}{r}
	  &\lesssim \mu^2_d(\sqrt{T}),
	  \\
	  \label{Num:733}
	  \Big\langl\Big|\int_{\Boule_1}\big(\nabla \sigma_T,\frac{\nabla \psi_T}{\sqrt{T}}\big)\Big|^{2r}\Big\rangl^\frac{1}{r}
	  &\lesssim
	  1.
	\end{align}
	In Step 6, we show that also $\nabla \sigma_T$ and $\nabla \psi_T$ satisfy \eqref{Ann_ap56}.
	Last, in Step 7, we argue that \eqref{Ann_as14} and \eqref{Ann_ap56} imply \eqref{Ann_ap57}.
	
	\parag{Step 1: Representation formulas of the Malliavin derivatives}
	  The starting point is provided by the following representation formulas of the Malliavin derivatives:
	  \begin{align}
	    \label{Id_1}
	    \frac{\partial F}{\partial a}&=\nabla v\otimes(e+\nabla\phi_T),
	    \\
	    \label{Id_2}
	    \frac{\partial F^\star}{\partial a}&=(f+\nabla v^\star)\otimes(e+\nabla\phi_T),
	  \end{align}
	  where $v$ and $v^\star$ are defined through
	  \begin{align}
	    \frac{1}{T}v-\nabla\cdot a^\star\nabla v&=\frac{1}{T}g+\nabla\cdot f,
	    \label{Ann_as06}
	    \\
	    \frac{1}{T} v^\star-\nabla\cdot a^\star\nabla v^\star&= \nabla \cdot  a^\star f.
	    \nonumber
	  \end{align}
	  
	  Here comes the argument for \eqref{Id_1} (\eqref{Id_2} is justified exactly the same way).
	  Given a smooth and compactly supported infinitesimal variation $\delta a$ of $a$, we have for the generated infinitesimal variation of $F$
	  \begin{align*}
	    \delta F
	    &\overset{\eqref{Def:F_T}}{=} \int \big(-\frac{1}{T}g \delta \phi_T +f\cdot\nabla \delta \phi_T\big)
	    \overset{\eqref{Ann_as06}}{=}-\int  \frac{1}{T} v \delta \phi_T + \nabla v \cdot a \nabla \delta \phi_T.
	  \end{align*}
	  Now, differentiating the definition \eqref{Ann_as12} of $\phi_T$ entails
	  \begin{align}\label{Diff_phiT}
	    \frac{1}{T} \delta \phi_{T}-\nabla\cdot a \nabla \delta \phi_{T}=\nabla \cdot \delta a (e+\nabla \phi_T),
	  \end{align}
	  which we test against $v$, obtaining \eqref{Id_1} in form of
	  \begin{align*}
	    \delta F=\int  \nabla v \cdot \delta a (e+\nabla \phi_T).
	  \end{align*}

	\parag{Step 2: Proof of \eqref{Massive_Step1}} 
	  We now derive \eqref{Massive_Step1} from \eqref{Id_1} and \eqref{Id_2}, focusing on $F$, since the argument for $F^\star$ is similar.
	  Since $\phi_T$ is stationary, so that $\nabla \phi_T$ and $\nabla\cdot a(e+\nabla \phi_T)$ have vanishing expectations and thus also $\frac{1}{T}\phi_T$ by \eqref{Ann_as12}, the random variable $F$ is of vanishing expectation.
	  Therefore, by the spectral gap \eqref{Prop21a}, invoking a duality argument, the H\"older inequality, the stationarity of $\nabla \phi_T$, and Jensen's inequality, we obtain:
	  \begin{equation}
	    \label{Estim11}
	    \begin{aligned}
	    \langl F^{2r}\rangl^{\frac{1}{r}} 
	    \lesssim~& \sup_{\langl G^{2r^\star}\rangl \leq 1} 
	    \Big\langl \int \Big( \fint_{\Boule_1(x)} | ( G\nabla v) \otimes(e+\nabla\phi_T)| \Big)^2 \dd x \Big\rangl
	    \\
	    \leq~& \sup_{\langl G^{2r^\star}\rangl \leq 1}
	    \int  \Big\langl \Big( \fint_{\Boule_1(x)} | G\nabla v|^2 \Big)^{r^\star}
	    \Big\rangl^{\frac{1}{r^\star}}
	    \Big\langl \Big(\fint_{\Boule_1(x)}  |e+\nabla\phi_T|^2\Big)^r \Big\rangl^{\frac{1}{r}} \dd x
	    \\
	    \overset{\eqref{Num:710}}{\leq}& (\Lambda_r+1) \sup_{\langl G^{2r^\star}\rangl \leq 1}
	    \int \Big\langl \Big( \fint_{\Boule_1(x)} | G\nabla v|^2 \Big)^{r^\star}
	    \Big\rangl^{\frac{1}{r^\star}} \dd x
	    \\
	    \overset{\eqref{Num:556}}{\leq}~& (\Lambda_r+1) \sup_{\langl G^{2r^\star}\rangl \leq 1}
	    \int  \langl | G\nabla v|^{2r^\star} \rangl^{\frac{1}{r^\star}} \dd x.
	    \end{aligned}
	  \end{equation}
	  Using annealed CZ estimates on \eqref{Ann_as06} or rather on 
	  $\frac{1}{T}Gv-\nabla\cdot a^\star\nabla Gv$ $=\frac{1}{T}Gg+\nabla\cdot Gf$
	  (the perturbative proof of which, based on \eqref{Ann_eee23}, is similar to the proof of Proposition \ref{PropCZ}(i) relying on Lemma \ref{LemCstMassCZ}), provided  $r^\star-1 \ll 1$, we have
	  \begin{equation*}
	    \begin{aligned}
	      \int \langl|G\nabla v|^{2r^\star}\rangl^\frac{1}{r^\star}
	      \lesssim~& \int \Big\langl\big|G\big(\frac{g}{\sqrt{T}},f\big)\big|^{2r^\star}\Big\rangl^\frac{1}{r^\star}
	      = \langl G^{2r^\star} \rangl^{\frac{1}{r^\star}} \int \big|\big(\frac{g}{\sqrt{T}},f\big)\big|^{2}.
	    \end{aligned}
	  \end{equation*}
	  Inserting this into \eqref{Estim11} yields \eqref{Massive_Step1}.

	\parag{Step 3: Proof of \eqref{Num:730}}
	  Let $R' \leq R$ (to be fixed later).
	  We denote by $u_{R'}$  the mollification of $u$ on scale $R'$ by averaging on balls of radius $R'$ (see \eqref{Num:719}).
	  By the Caccioppoli estimate (see Lemma \ref{Lem_Caccio}), for all constants $c$, we have
	  \begin{equation*}
	    \fint_{\Boule_{R}}\big|\big(\frac{u-c}{\sqrt{T}},\nabla u\big)\big|^2\lesssim
	    \fint_{\Boule_{2R}}\big|\big(\frac{g-c}{\sqrt{T}},f,\frac{u-c}{R}\big)\big|^2.
	  \end{equation*}
	  Choosing $c=\fint_{\Boule_{2R}}u_{R'}$ and invoking \eqref{Num:739} (replacing $R \rightsquigarrow R/2$), we obtain
	  \begin{align*}
	    \fint_{\Boule_R}\big|\big(\frac{u}{\sqrt{T}},\nabla u\big)\big|^2
	    \lesssim~&
	    \fint_{\Boule_{4R}}\big|\big(\frac{g}{\sqrt{T}},f,\frac{u_{R'}}{\sqrt{T}},\nabla u_{R'}\big)\big|^2
	    +\big(\frac{R'}{R}\big)^2\fint_{\Boule_{4R}}|\nabla u|^2,
	  \end{align*}
	  of which we take the $r$-th moments:
	  \begin{align*}
	    &\Big\langl \Big(\fint_{\Boule_R}\big|\big(\frac{u}{\sqrt{T}},\nabla u\big)\big|^2\Big)^r \Big\rangl
	    \\
	    &\qquad\lesssim
	    \Big\langl \Big(\fint_{\Boule_{4R}}\big|\big(\frac{g}{\sqrt{T}},f,\frac{u_{R'}}{\sqrt{T}},\nabla u_{R'}\big)\big|^2\Big)^r \Big\rangl
	    +\big(\frac{R'}{R}\big)^{2r}\Big\langl \Big(\fint_{\Boule_{4R}}|\nabla u|^2\Big)^r \Big\rangl.
	  \end{align*}
	  By the triangle inequality and using the stationarity of $\nabla u$, which implies the estimate 
	  $\langl (\fint_{\Boule_{4R}}|\nabla u|^2)^r \rangl$ 
	  $\lesssim \langl (\fint_{\Boule_{R}}|\nabla u|^2)^r  \rangl$,
	  \textit{cf.} \eqref{Num:771}, and choosing ${R'}=\theta R$ with $\theta \ll 1$, we may absorb the second r.~h.~s.\ term into the l.~h.~s.\ :
	  \begin{align*}
	    \Big\langl \Big(\fint_{\Boule_R}\big|\big(\frac{u}{\sqrt{T}},\nabla u\big)\big|^2\Big)^r \Big\rangl
	    \lesssim~&
	    \Big\langl \Big(\fint_{\Boule_{4R}}\big|\big(\frac{g}{\sqrt{T}},f,\frac{u_{\theta R}}{\sqrt{T}},\nabla u_{\theta R}\big)\big|^2\Big)^r \Big\rangl.
	  \end{align*}
	  Finally, by Jensen's inequality and the stationarity of $u$, $f$, and $g$, we get \eqref{Num:730}.
	  
	  \parag{Step 4: Proof of \eqref{Massive_Step3} and \eqref{Ann_ap56} for $\nabla\phi_T$}
	  By the same reasoning based on the stationarity of $(\phi_T,\nabla \phi_T)$ as the one leading to \eqref{2b}, we get
	  \begin{equation*}
	    \Lambda_r
	    \lesssim
	    R^{d(1-\frac{1}{r})} \Big\langl \Big(\fint_{\Boule_{2R}} \big|\big(\frac{\phi_T}{\sqrt{T}},\nabla \phi_T\big)\big|^2 \Big)^{r}\Big\rangl^{\frac{1}{r}}
	    \pour R \gg 1.
	  \end{equation*}
	  Inserting \eqref{Num:730}, where we replace $(u,g,f) \rightsquigarrow (\phi_T, 0, a e)$, into this estimate yields
	  \begin{align}
	    \label{Estim12}
	    \Lambda_r
	      \lesssim
	      R^{d(1-\frac{1}{r})} \Big( \Big\langl  \Big| \fint_{\Boule_{\theta R}} \big( \frac{\phi_T}{\sqrt{T}},\nabla \phi_T \big) \Big|^{2r} \Big \rangl^{\frac{1}{r}}
	      +1\Big).
	  \end{align}
	  On the other hand, setting $g:=\sqrt{T}\mathds{1}_{\Boule_{R}}$ and $f:=\mathds{1}_{\Boule_{R}}e_j$ for $j \in \{1,\cdots,d\}$, we obtain from \eqref{Massive_Step1} that
	  \begin{align*}
	    \Big\langl \Big| \fint_{\Boule_{R}} \big(\frac{\phi_T}{\sqrt{T}},\nabla \phi_T \big) \Big|^{2r} \Big\rangl^{\frac{1}{r}}
	    \lesssim \Lambda_r R^{-d}.
	  \end{align*}
	  Inserting this (for $R \rightsquigarrow \theta R$) into \eqref{Estim12} yields
	  \begin{align*}
	    \Lambda_r
	    \lesssim R^{-\frac{d}{r}} \Lambda_r +R^{d(1-\frac{1}{r})} \pour R \gg 1.
	  \end{align*}
	  Hence, we may absorb the first r.~h.~s.\ term and obtain \eqref{Massive_Step3}.
	  By H\"older regularity of the coefficient field, namely by Lemma \ref{LemAppendHold} (in which we replace the fields $(u,f)$ and the exponents $(q,\alpha)$ by $(\phi_T,a\cdot e)$ and $(2,\alpha/2)$, respectively), the Hölder inequality, and Lemma \ref{LemSG}, estimate \eqref{Massive_Step3} may be upgraded to
	  \begin{equation*}
	    \begin{aligned}
	      \langl|\nabla\phi_T|^{2r}\rangl^{\frac{1}{r}}
	      &\overset{\eqref{cw18}}{\lesssim} \Big\langle  (1+ \|a \|_{\CC^{\frac{\alpha}{2}}(\Boule_1)})^{\frac{2r}{\alpha}(\frac{d}{2}+1)} \Big(1+\fint_{\Boule_1}\big| \big(\frac{\phi_T}{\sqrt{T}},\nabla \phi_T \big) \big|^2 \Big)^r \Big\rangle^{\frac{1}{r}}
	      \\
	      &\leq \big\langle (1+ \|a \|_{\CC^{\frac{\alpha}{2}}(\Boule_1)})^{\frac{4r}{\alpha}(\frac{d}{2}+1)} \big\rangle^{{\frac{\alpha}{4r(\frac{d}{2}+1)}}} \Big\langle \Big(1+\fint_{\Boule_1}\big| \big(\frac{\phi_T}{\sqrt{T}},\nabla \phi_T \big) \big|^2 \Big)^{2r} \Big\rangle^{\frac{1}{2r}}
	      \\
	      &\!\!\!\!\!\!\!\!\overset{\eqref{wg03},\eqref{Massive_Step3}}{\lesssim} 1.
	    \end{aligned}
	  \end{equation*}

	\parag{Step 5: Proof of \eqref{Ann_as14} and \eqref{Num:733}}
	  Fixing $i, j, k, l \in \{1,\cdots,d\}$, we introduce the auxiliary functions
	  \begin{align*}
	  \big(\frac{1}{T}-\Delta\big)\ubar =\mathds{1}_{\Boule_1} \et \quad \big(\frac{1}{T}-\Delta\big)\vbar =-\nabla \cdot (\mathds{1}_{\Boule_1} e_l),
	  \end{align*}
	  so that by the symmetry of $\frac{1}{T}-\Delta$ and the definitions \eqref{Ann_as12s} and \eqref{Ann_as11} of $\sigma_{T}$ and $\psi_{T}$, we obtain the representations
	  \begin{align*}
	    &\int_{\Boule_1}\phi_{T}=\int \big(\frac{1}{T}g\phi_T-f\cdot\nabla\phi_T\big)
	    &&\pour(g,f):=(\ubar ,-\nabla\ubar ),
	    \\
	    &\int_{\Boule_1}\sigma_{T,ijk}=\int  f\cdot (q_{T,i}-\langl q_{T,i}\rangl)
	    &&\pour f:=\partial_k\ubar e_j-\partial_j\ubar e_k,
	    \\
	    &\int_{\Boule_1}\psi_{T,ij}=\int  f\cdot (q_{T,i}-\langl q_{T,i}\rangl-\nabla\phi_{T,i})
	    &&\pour f:=\ubar e_j,
	    \\
	    &\int_{\Boule_1}\partial_l \sigma_{T,ijk}=\int  f\cdot (q_{T,i}-\langl q_{T,i}\rangl)
	    &&\pour f:=\partial_k\vbar e_j-\partial_j\vbar e_k,
	    \\
	    &\int_{\Boule_1}\partial_l \psi_{T,ij}=\int  f\cdot (q_{T,i}-\langl q_{T,i}\rangl-\nabla\phi_{T,i})
	    &&\pour f:=\vbar e_j.
	  \end{align*}
	  Hence (\ref{Ann_as14}) and  \eqref{Num:733} follow from \eqref{Massive_Step1} and \eqref{Massive_Step3} via the standard estimates
	  \begin{align*}
	  \int \big|\big(\frac{\ubar }{\sqrt{T}},\nabla \ubar \big)\big|^2
	  \lesssim\mu^2_d(\sqrt{T}) 
	  \et \quad 
	  \int \big|\big(\frac{\vbar }{\sqrt{T}},\nabla \vbar \big)\big|^2 \lesssim 1.
	  \end{align*}
	  
	\parag{Step 6: Proof of  \eqref{Ann_ap56} for $\nabla \sigma_T$ and $\nabla\psi_T$}
	  The argument, similar to Part 2, Step 2 of the proof of Proposition \ref{Propcorr}, is the same for $\sigma_T$ and $\psi_T$; therefore we only show \eqref{Ann_ap56} for $\nabla \sigma_T$.
	  
	  By local CZ estimate \eqref{cw15} (replacing $(p,q) \rightsquigarrow (2r,2)$) for the constant-coefficient operator $\frac{1}{T} - \Delta$ applied to the equation \eqref{Ann_as12s}, there holds
	  \begin{equation*}
	    \fint_{\Boule_R} \big|\big( \frac{\sigma_T}{\sqrt{T}}, \nabla \sigma_T \big) \big|^{2r}
	    \lesssim \fint_{\Boule_{2R}} \lt|q_T\rt|^{2r} + \Big(\fint_{\Boule_{2R}} \big|\big( \frac{\sigma_T}{\sqrt{T}}, \nabla \sigma_T \big) \big|^2\Big)^r.
	  \end{equation*}
	   By ergodicity and stationarity of $\sigma_T$ and $q_T$, when $R \uparrow \infty$, each of these spatial averages converges almost surely to the associated expectation (this is a consequence of the Birkhoff theorem).
	   Hence
	   \begin{equation}\label{Num:951}
	     \big\langle \big|\big( \frac{\sigma_T}{\sqrt{T}}, \nabla \sigma_T \big) \big|^{2r} \big\rangle
	     \lesssim \big \langle  \lt|q_T\rt|^{2r} \big \rangle
	     +\big\langle \big|\big( \frac{\sigma_T}{\sqrt{T}}, \nabla \sigma_T \big) \big|^{2} \big\rangle^r.
	   \end{equation}
	   Moreover, since the constant-coefficient operator $\frac{1}{T}-\Delta$ obviously satisfies the weighted energy estimate \eqref{Ann_ap34} (for a weight $\omega_T$ defined by \eqref{Ann_ap21} for $C \gg_d 1$), there holds
	   \begin{equation*}
	     \int \omega_T \big\langle\big|\big( \frac{\sigma_T}{\sqrt{T}}, \nabla \sigma_T \big) \big|^{2} \big \rangle \lesssim  \int \omega_T \big\langle\lt|q_T\rt|^{2}\big \rangle,
	   \end{equation*}
	   from which we deduce
	   \begin{equation*}
	     \big\langle \big|\big( \frac{\sigma_T}{\sqrt{T}}, \nabla \sigma_T \big) \big|^{2}\big\rangl \lesssim \big\langle \lt|q_T\rt|^{2}\big\rangle
	     \overset{\eqref{Def_qT},\eqref{Ann_ap56}}{\lesssim} 1.
	   \end{equation*}
	   Inserting this estimate into \eqref{Num:951} finally yields the desired estimate  \eqref{Ann_ap56} for $\nabla \sigma_T$.
	  
	\parag{Step 7: Proof of \eqref{Ann_ap57}}
	  We finally pass from \eqref{Ann_as14} to \eqref{Ann_ap57} with help of \eqref{Ann_ap56}.
	  Since the proof is similar for $\phi_T$, $\sigma_T$ and $\psi_T$, we only treat $\phi_T$.
	  Appealing to a Sobolev embedding in form of
	  \begin{align*}
	    |\phi_T(0)|^{2r} \lesssim 
	    \fint_{\Boule_1}|\nabla\phi_T|^{2r}
	    +\Big|\fint_{\Boule_1}\phi_T\Big|^{2r},
	  \end{align*}
	  which holds provided $2r>d$, and to the stationarity of $\nabla\phi_T$, we obtain the desired estimate:
	  \begin{align*}
	    \langl \phi_T^{2r}\rangl^\frac{1}{r}
	    \lesssim \langl |\nabla\phi_T|^{2r}\rangl^\frac{1}{r}
	    +\Big\langl \Big|\fint_{\Boule_1}\phi_T\Big|^{2r} \Big\rangl^{\frac{1}{r}} \overset{\eqref{Ann_ap56},\eqref{Ann_as14}}{\lesssim} \mu^2_d(\sqrt{T}).
	  \end{align*}
	  {\hfill $\qed$}

  \subsection{Alternative approach via quenched CZ estimates}\label{SecLocal} %%% BIEN
  
    We describe here another approach in order to obtain annealed CZ estimates; we restrict our explanation to a special case of Proposition \ref{PropCZ}(ii), namely specifying $r' \rightsquigarrow p$ in \eqref{Estim13_ter}:
    \begin{align}
      \label{Prop3E2}
      \int  \langl |\nabla u|^p \rangl
      \lesssim_{\gamma, p, r}
      \int  \langl |f|^{r} \rangl^{\frac{p}{r}} \quad\text{provided } 1< p < r < \infty.
    \end{align}
    This strategy, which is close to \cite[Prop. 6.4]{DuerinckxOtto_2019} and inspired by periodic homogenization (see the recent monograph \cite[Th.\ 4.3.1, p.\ 83]{Shen}), is quite different from the one of previous section.
    However, it involves two important concepts: 
    \begin{itemize}
      \item{the minimal radius $\rstar$, above which homogenization kicks in,}
      \item{the quenched CZ estimates (\textit{i.e.} deterministic or pathwise estimates).}
    \end{itemize}
    The very difference between the strategy exposed here and \cite[Prop. 6.4]{DuerinckxOtto_2019} is that we do not need the full strength of the Lipschitz regularity theory to get quenched estimates; on the contrary, we only rely on the weaker result Proposition \ref{PropHcon}.

    The first idea is to build a random stationary field of minimal radii $\rstar$ above which  homogenization kicks in the sense of Proposition \ref{PropHcon} (see \cite[Th. 1]{Gloria_Neukamm_Otto_2019}):
    \begin{lemma}[Minimal radius $\rstar$]
      For every $\delta >0$, there exists a random stationary field $\rstar \geq 1$ such that:
      
      (i) For all $R \geq \rstar(x)$, we have
      \begin{equation*}
	\frac{1}{R} \Big( \fint_{\Boule_R(x)} | ( \phi,\phi^\star,\sigma,\sigma^\star) -  \fint_{\Boule_R(x)} ( \phi,\phi^\star,\sigma,\sigma^\star) |^2 \Big)^{\frac{1}{2}} \leq \delta.
      \end{equation*}
      
      (ii) For all $p<\infty$ there holds
      \begin{equation*}
	\langl \rstar^p \rangl \lesssim_{\gamma, p, \delta} 1.
      \end{equation*}
      
      (iii) The function $x \mapsto \rstar(x)$ is (almost surely) $\frac{1}{2}$-Lipschitz.
    \end{lemma}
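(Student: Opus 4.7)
The plan is to define $\rstar$ as a sup-convolution of a preliminary threshold radius in order to enforce the $\tfrac{1}{2}$-Lipschitz property (iii) automatically, while inheriting (i) and (ii) from the preliminary object. Write
\begin{equation*}
  s(x,R) := \frac{1}{R}\bigg(\fint_{\Boule_R(x)}\Big|(\phi,\phi^\star,\sigma,\sigma^\star) - \fint_{\Boule_R(x)}(\phi,\phi^\star,\sigma,\sigma^\star)\Big|^2\bigg)^{1/2}
\end{equation*}
and set $r(x) := 1 \vee \sup\{R \geq 1 : s(x, R) > \delta\}$. This is a stationary field (as a functional of the stationary $\nabla(\phi,\sigma)$), and by continuity of $s(x, \cdot)$ in $R$ the analogue of (i) holds for $r$ in place of $\rstar$.

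For the moment bound, I would combine Fubini and Jensen with the increment estimate in Proposition \ref{Propcorr}(iii) to obtain
\begin{equation*}
  \big\langle s(x,R)^{2p} \big\rangle^{1/p} \lesssim_{\gamma, p} \frac{\mu_d^2(R)}{R^2} \qquad \forall R \geq 1,
\end{equation*}
hence $\Pr(s(x,R) > \delta) \lesssim_{\gamma,p,\delta} (\mu_d(R)/R)^{2p}$ by Markov. Since $\mu_d(R)/R$ decays at least like $R^{-\kappa_d}$ for some $\kappa_d > 0$ in every dimension ($\kappa_1 = 1/2$, $\kappa_d$ just below $1$ for $d \geq 2$), summing these tails over dyadic scales $R_k = 2^k \geq R$ and handling intermediate scales by the trivial volume comparison $s(x, R') \lesssim s(x, R_{k+1})$ for $R' \in [R_k, R_{k+1}]$ gives $\Pr(r(x) > R) \lesssim_{\gamma,p,\delta} R^{-2p\kappa_d}$. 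Taking $p$ arbitrarily large yields $\langle r(x)^q \rangle \lesssim_{\gamma,q,\delta} 1$ for every $q < \infty$.

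To obtain the Lipschitz property, I would then set
\begin{equation*}
  \rstar(x) := \sup_{y \in \R^d} \Big\{ r(y) - \tfrac{1}{2}|x - y| \Big\}.
\end{equation*}
Taking $y = x$ gives $\rstar(x) \geq r(x)$, so (i) is inherited; and the $\tfrac{1}{2}$-Lipschitz property of $\rstar$ is a direct consequence of the standard sup-convolution computation $\rstar(x) \leq r(y) - \tfrac{1}{2}|x'-y| + \tfrac{1}{2}|x-x'| \leq \rstar(x') + \tfrac{1}{2}|x-x'|$. Almost sure finiteness and (ii) would then follow, after a routine continuum-to-lattice reduction, from the bound
\begin{equation*}
  \rstar(x) \leq \Big( \sum_{z \in \Z^d}\big(\sup_{|w-z| \leq 1} r(w) - \tfrac{1}{2}|x-z|\big)_+^q \Big)^{1/q},
\end{equation*}
once we notice that the super-polynomial tail of $r$ established in the previous step ensures convergence of this sum in $L^q_{\langle\cdot\rangle}$ for every $q$.

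The main obstacle is the tension between properties (i) and (iii): the natural sup-based definition of $r$ that makes (i) built-in has no a priori Lipschitz regularity in $x$, while the sup-convolution that produces a Lipschitz field could in principle inflate the stochastic moments. The resolution is that Proposition \ref{Propcorr}(iii), once upgraded by Fubini and Jensen to the spatial averages appearing in $s(x,R)$, delivers tails for $r$ that decay faster than \emph{any} polynomial in $R$ -- which is precisely what is needed to dominate the sup-convolution in $L^q_{\langle\cdot\rangl}$ for arbitrary $q < \infty$.
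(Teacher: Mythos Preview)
The paper does not actually prove this lemma: it is stated in Section~\ref{SecLocal} with a reference to \cite[Th.~1]{Gloria_Neukamm_Otto_2019}, accompanied only by the remark that the moment control of $(\phi,\sigma)$ from Proposition~\ref{Propcorr} is the crucial input for~(ii). Your proposal is therefore not competing against a proof in the text, and it is in line with that remark---you feed Proposition~\ref{Propcorr}(iii) through Minkowski/Jensen to get $\langle s(x,R)^{2p}\rangle^{1/p}\lesssim \mu_d^2(R)/R^2$, turn this into arbitrarily fast polynomial tails for the preliminary radius $r(x)$, and then sup-convolve to manufacture the $\tfrac12$-Lipschitz property while retaining~(i) and~(ii). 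This is the standard construction and your outline is correct.

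Two small points you should tighten. First, the ``routine continuum-to-lattice reduction'' needs one concrete ingredient: to control $\sup_{|w-z|\le C'}r(w)$ you should use the volume comparison in the \emph{spatial} variable, namely $s(w,R')\lesssim s(z,2R')$ whenever $|w-z|\le R'$ (same argument as your $s(x,R')\lesssim s(x,R_{k+1})$, now with shifted centers). This shows that $\{\sup_{|w-z|\le C'}r(w)>R\}\subset\{s(z,2R')>c\delta\text{ for some }R'>R\}$ once $R\ge C'$, so the local sup inherits the same super-polynomial tails as $r(z)$ itself, and the lattice sum converges. Second, the cube covering requires $|w-z|\le \sqrt{d}/2$ rather than $1$; this is of course harmless.
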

    In establishing (ii), the control of moments of $(\phi,\sigma)$ provided by Proposition \ref{Propcorr} plays a crucial role.
    
    Next, appealing to a CZ decomposition \cite[Th.\ 4.3.1, p.\ 83]{Shen}, a global \textit{quenched} $\LL^p$ estimate (see also \cite[Prop. 6.4]{DuerinckxOtto_2019}) may be obtained:
    \begin{lemma}[Quenched CZ estimates]
      For every $p \in (1,\infty)$, there exists a constant $\delta=\delta(d,\lambda,p)>0$ such that, for the associated $\rstar$ defined in Lemma \ref{Cor2}, we have:
      
      If $\nabla u$ and $f$ are square-integrable random vector fields related by
      \begin{align*}
	\nabla \cdot ( a \nabla u + f)=0,
      \end{align*}
      then the following quenched estimates hold:
      \begin{align}\label{Quench}
	\int  \Big( \fint_{\Boule_{\rstar(x)}(x)} | \nabla u|^2 \Big)^p \dd x
	\lesssim_{\gamma, p, \delta}
	\int  \Big( \fint_{\Boule_{\rstar(x)}(x)} | f|^2 \Big)^p \dd x.
      \end{align}
    \end{lemma}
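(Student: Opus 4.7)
The plan is to apply Shen's real-variable Calder\'on--Zygmund argument (in the form of \cite[Th.\ 4.3.1, p.\ 83]{Shen}), using as the key deterministic input a reverse H\"older inequality for $a$-harmonic functions on scales above $\rstar$, which itself is supplied by Proposition~\ref{PropHcon}. I will outline the three main ingredients in the order I would carry them out, then indicate the chief difficulty.

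The base case $p=1$ follows (via Fubini and the $\frac{1}{2}$-Lipschitz property of $\rstar$) from the whole-space energy estimate $\int|\nabla u|^2\lesssim\int|f|^2$, which also yields $L^2$ boundedness of the underlying operator $f\mapsto\nabla u$. The heart of the proof is a reverse H\"older inequality at scales above $\rstar$: for some $p_0>2$ (depending on $p$) and for $\delta=\delta(d,\lambda,p)$ small enough, if $u$ is $a$-harmonic on $\Boule_{2R}(x_0)$ with $R\ge\rstar(x_0)$, then
\begin{equation*}
\Big(\fint_{\Boule_R(x_0)}|\nabla u|^{p_0}\Big)^{1/p_0}\lesssim\Big(\fint_{\Boule_{2R}(x_0)}|\nabla u|^2\Big)^{1/2}.
\end{equation*}
To prove this I would compare $u$ on $\Boule_{3R/2}(x_0)$ with the $\abar$-harmonic $\ubar$ sharing its boundary values on $\partial\Boule_{3R/2}(x_0)$: Proposition~\ref{PropHcon}(i) (applicable precisely because $R\ge\rstar(x_0)$ and the smallness parameter in the definition of $\rstar$ is set to $\delta$) gives $\fint_{\Boule_{3R/2}}(u-\ubar)^2\le\epsilon R^2\fint_{\Boule_{2R}}|\nabla u|^2$; Caccioppoli's estimate upgrades this to an estimate on $\nabla(u-\ubar)$ in $L^2(\Boule_{5R/4})$; and classical interior regularity for the constant-coefficient equation $-\nabla\cdot\abar\nabla\ubar=0$ gives $\|\nabla\ubar\|_{L^{p_0}(\Boule_R)}\lesssim\|\nabla\ubar\|_{L^2(\Boule_{3R/2})}$ for every $p_0<\infty$. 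Combining the two estimates yields the displayed inequality (with a further inhomogeneous $f$-contribution when $f\ne 0$, obtained by the splitting $u=v+w$ with $v$ solving $\nabla\cdot(a\nabla v+f\mathds{1}_{2\Boule_R})=0$ and estimated by the energy bound, and $w$ being $a$-harmonic on $\Boule_{2R}(x_0)$ and estimated by the case just proved).

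To pass from this reverse H\"older inequality to the sought $L^p$-estimate, I would invoke Shen's real-variable theorem applied to the sublinear map $f\mapsto M(x):=(\fint_{\Boule_{\rstar(x)}(x)}|\nabla u|^2)^{1/2}$. The base $L^2$-boundedness of $M$ coming from the energy estimate, combined with the reverse H\"older inequality above (which provides the good-$\lambda$ inequality required by Shen's lemma), yields boundedness of $M$ on $L^{2p}$ for every $2p<p_0$. Since $\delta$ may be chosen arbitrarily small depending on $p$, $p_0$ can be made arbitrarily large, covering the full range $p\in(2,\infty)$; the range $p\in(1,2]$ is recovered by a straightforward duality argument applied to the transposed operator $\nabla(-\nabla\cdot a^\star\nabla)^{-1}\nabla\cdot$.

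The main obstacle is that Shen's argument is classically formulated with a single ambient scale, whereas here the averaging radius $\rstar(x)$ depends on $x$ (and on $a$). The $\frac{1}{2}$-Lipschitz property of $\rstar$ from item (iii) is crucial: it ensures that whenever a covering ball $\Boule_R(x_0)$ satisfies $R\ge\rstar(x_0)$, one has $R\ge\frac{1}{2}\rstar(x)$ uniformly for $x\in\Boule_R(x_0)$, so the reverse H\"older inequality of the previous step applies on \emph{every} ball produced by the stopping-time decomposition. Organizing this stopping-time construction around the $\rstar$-scales is the technical heart of the adaptation of Shen's argument to the random setting; once it is in place the rest of the proof is standard real analysis.
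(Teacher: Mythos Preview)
Your overall strategy—feed Proposition~\ref{PropHcon} into Shen's real-variable lemma \cite[Th.~4.3.1]{Shen}—matches the paper's sketch (the paper does not give a detailed proof here, only the reference and the remark that Proposition~\ref{PropHcon} replaces the large-scale Lipschitz estimate). However, your implementation has a genuine gap.

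The reverse H\"older inequality you claim for $\nabla u$ itself,
\[
\Big(\fint_{\Boule_R(x_0)}|\nabla u|^{p_0}\Big)^{1/p_0}\lesssim\Big(\fint_{\Boule_{2R}(x_0)}|\nabla u|^2\Big)^{1/2},
\]
does not follow from your argument. When you apply Caccioppoli to $u-\ubar$, the equation reads $-\nabla\cdot a\nabla(u-\ubar)=\nabla\cdot(a-\abar)\nabla\ubar$, so
\[
\fint_{\Boule_{5R/4}}|\nabla(u-\ubar)|^2\lesssim \frac{1}{R^2}\fint_{\Boule_{3R/2}}(u-\ubar)^2+\fint_{\Boule_{3R/2}}|(a-\abar)\nabla\ubar|^2.
\]
The first term is $O(\epsilon)$ by Proposition~\ref{PropHcon}(i), but the second is $O(1)$ since $a-\abar$ is not small. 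Hence $\nabla(u-\ubar)$ is only \emph{bounded} in $L^2$, not small, and in any case an $L^2$ bound on the error cannot be combined with the $L^{p_0}$ bound on $\nabla\ubar$ to produce an $L^{p_0}$ bound on $\nabla u$. In fact, a pointwise reverse H\"older for $\nabla u$ at scales $\ge\rstar$ is essentially the large-scale Lipschitz estimate that the paper expressly avoids.

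The fix is to work with the $\rstar$-averaged quantity $M(x)=(\fint_{\Boule_{\rstar(x)}(x)}|\nabla u|^2)^{1/2}$ from the outset and to use Proposition~\ref{PropHcon}(ii) rather than~(i). For an $a$-harmonic $w$ on $\alpha\Boule$ one has $\fint_{\frac{1}{4}\alpha\Boule}|\nabla w-\partial_i\bar w(e_i+\nabla\phi_i)|^2\le\epsilon\fint_{\alpha\Boule}|\nabla w|^2$, so on the inner ball
\[
M_w(x)\lesssim\sup_{\Boule_{\rstar(x)}(x)}|\nabla\bar w|\cdot\Big(\fint_{\Boule_{\rstar(x)}(x)}|e+\nabla\phi|^2\Big)^{1/2}+E(x),
\]
where $\fint E^2\lesssim\epsilon\fint_{\alpha\Boule}|\nabla w|^2$. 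The crucial point is that the corrector factor is $\lesssim 1$: by Caccioppoli applied to the $a$-harmonic coordinate $x_i+\phi_i$ together with the $\delta$-sublinearity of $\phi$ at scale $\rstar$ (the very definition of $\rstar$), one gets $\fint_{\Boule_{\rstar(x)}(x)}|e_i+\nabla\phi_i|^2\lesssim 1$. Thus the ``good part'' in Shen's approximation lemma is $|\nabla\bar w|$, which enjoys the constant-coefficient $L^\infty$ reverse H\"older, while the ``small part'' $E$ carries the $\epsilon$. This is where the $\rstar$-averaging in the quenched norm is indispensable: it absorbs the $\nabla\phi$ oscillations that would otherwise obstruct an $L^{p_0}$ bound.
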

    
    Finally, Proposition \ref{PropCZ}(ii) is obtained in two steps:
    \begin{itemize}
      \item{first, the quenched estimate \eqref{Quench} is upgraded to an annealed estimate thanks to the control on the minimal radius $\rstar$ (at the price of a loss in stochastic integrability),}
      \item{then, an interpolation argument between the previous estimate and \eqref{Prop3E1} establishes \eqref{Prop3E2}.
      }
    \end{itemize}

\section{Acknowledgements}

    The second author wishes to thank his long-term collaborators A.~Gloria and M.~Duerinckx, the many discussions with whom influenced this text in a way  that cannot be done justice by the bibliography. Just to mention one example: The strategy of the proof of Lemma \ref{LemMassCorr} originated from discussions with A.~Gloria and J.~Nolen. 
    F.~O.~also acknowledges the hospitality of the Institut des Math\'ematiques de Toulouse, where he gave a lecture series on Sections 1 through 5 (and Section \ref{SecLocal}) of this text.
    Both authors thank J. Sauer for his precious help concerning the theory of UMD spaces and his accurate advice for references.

  \bibliographystyle{plain}  
%   \bibliography{0_Bib_unifie}
\def\cprime{$'$} \def\cprime{$'$} \def\cprime{$'$}

\appendix
\section{Standard regularity theory}

\subsection{Caccioppoli estimate}\label{ProofCaccioppoli}

  For the convenience of the reader, we reproduce the proof of the classical Caccioppoli estimate (see for instance \cite[Th.\ 4.4, p.\ 63]{Giaquinta} for the massless operator): \begin{lemma}\label{Lem_Caccio}
    Let $T \in [1,+\infty]$, and $R >0$.
    Assume that the coefficient field $a$ satisfies \eqref{Ellipticite}.
    If $u$ solves
    \begin{align}\label{Caccio_Eq}
      \frac{1}{T}u-\nabla \cdot a \nabla u=\frac{1}{T}g + \nabla \cdot f \dans \Boule_{R},
    \end{align}
    then, for any constant $c$, there holds
    \begin{equation}\label{Caccioppoli3}
      \fint_{\Boule_{R/2}}\big|\big(\frac{u-c}{\sqrt{T}},\nabla u\big)\big|^2\lesssim
      \fint_{\Boule_{R}}\big|\big(\frac{g-c}{\sqrt{T}},f,\frac{u-c}{R}\big)\big|^2.
    \end{equation}
  \end{lemma}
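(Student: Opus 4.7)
The plan is to follow the classical testing argument, adapted to the massive setting. First, observe that constants solve $\frac{1}{T} c - \nabla \cdot a \nabla c = \frac{1}{T} c$, so subtracting this from \eqref{Caccio_Eq} I may rewrite the equation in the convenient form
\begin{equation*}
  \frac{1}{T}(u-c) - \nabla \cdot a \nabla u = \frac{1}{T}(g-c) + \nabla \cdot f \dans \Boule_R.
\end{equation*}
This reduction is important because the zero-order term on the r.~h.~s.\ now measures $g-c$, matching the $\frac{g-c}{\sqrt{T}}$ on the r.~h.~s.\ of \eqref{Caccioppoli3}.

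Next, I would introduce a smooth cut-off function $\eta$ with $\eta \equiv 1$ on $\Boule_{R/2}$, $\Supp(\eta) \subset \Boule_R$, and $|\nabla \eta| \lesssim 1/R$, and test the above equation with $\eta^2 (u-c)$. The test function is admissible by compact support inside $\Boule_R$. After integration by parts on the diffusion term (which introduces no boundary term), this yields
\begin{equation*}
  \frac{1}{T} \int \eta^2 (u-c)^2 + \int \eta^2 \nabla u \cdot a \nabla u
  = -2 \int \eta (u-c) \nabla \eta \cdot a \nabla u + \frac{1}{T} \int \eta^2 (u-c)(g-c) - \int \nabla(\eta^2 (u-c)) \cdot f.
\end{equation*}
The key structural point is that $\nabla u$ appears quadratically on the l.~h.~s.\ with the coercivity constant $\lambda$ (by \eqref{Ellipticite}), and only linearly in the problematic cross terms on the r.~h.~s.

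The third step is Young's inequality applied to each r.~h.~s.~cross term in order to absorb the $\int \eta^2 |\nabla u|^2$ and $\frac{1}{T}\int \eta^2 (u-c)^2$ contributions into the l.~h.~s. Specifically, $2|\eta(u-c) \nabla\eta \cdot a\nabla u|$ is split into $\epsilon \eta^2 |\nabla u|^2 + C_\epsilon (u-c)^2 |\nabla \eta|^2$; the $f$-term is handled by $\eta^2 \nabla u \cdot f$ and $(u-c) \eta \nabla\eta \cdot f$, also bounded by $\epsilon \eta^2 |\nabla u|^2 + C_\epsilon \eta^2 |f|^2 + C_\epsilon (u-c)^2|\nabla \eta|^2$; and the massive term $\frac{1}{T} \eta^2 (u-c)(g-c)$ by $\frac{1}{2T}\eta^2 (u-c)^2 + \frac{1}{2T}\eta^2(g-c)^2$. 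Choosing $\epsilon$ small enough to absorb into the $\lambda \int \eta^2 |\nabla u|^2$ and the full $\frac{1}{T}\int \eta^2 (u-c)^2$ terms, and using $|\nabla \eta|^2 \lesssim 1/R^2$ together with $\eta \leq 1$, the remaining l.~h.~s.\ controls $\int_{\Boule_{R/2}} (|\nabla u|^2 + (u-c)^2/T)$ from below by $\eta \equiv 1$ there, and the r.~h.~s.\ is bounded by $\int_{\Boule_R} (|f|^2 + (u-c)^2/R^2 + (g-c)^2/T)$. Dividing by $|\Boule_{R/2}| \sim R^d$ and using $|\Boule_R|/|\Boule_{R/2}| \lesssim 1$, this is precisely \eqref{Caccioppoli3}. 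There is no real obstacle; the only minor subtlety, compared to the massless case, is the bookkeeping that ensures the $\frac{1}{T}$-term is handled symmetrically on both sides, for which the initial reduction subtracting the constant $c$ is essential.
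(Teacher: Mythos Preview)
Your proposal is correct and follows essentially the same approach as the paper's proof: the paper reduces to $c=0$ via the substitution $(u,g)\rightsquigarrow(u+c,g+c)$, introduces the same cut-off $\eta$, tests \eqref{Caccio_Eq} with $\eta^2 u$, and absorbs the cross terms by Cauchy--Schwarz/Young exactly as you do. Your initial reduction by subtracting the constant equation is just a rephrasing of the paper's substitution, and the remaining steps match.
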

  
  \begin{proof}
    We select a smooth function $\eta$ supported in $\Boule_R$ with values in $[0,1]$ such that $\eta=1$ in $\Boule_{R/2}$ and $|\nabla \eta| \lesssim R^{-1}$.
    Note that \eqref{Caccio_Eq} is invariant under the substitution ${(u,g)\rightsquigarrow({u + c}, \break{g+c})}$.
    Hence, w.~l.~o.~g.\ we show only \eqref{Caccioppoli3} for $c=0$.    
    Testing \eqref{Caccio_Eq} with $\eta^2 u$, we get by Leibniz' rule in form of $\nabla(\eta^2 u)=\eta^2\nabla u+ 2 \eta u \nabla \eta$ that
    \begin{align*}
      &\frac{1}{T}\fint_{\Boule_R} \eta^2 u^2 + \fint_{\Boule_R} \eta^2 \nabla u \cdot a \nabla u
      \\
      &\qquad =
      -2 \fint_{\Boule_R} \eta u \nabla \eta \cdot a \nabla u
      +\frac{1}{T}\fint_{\Boule_R}\eta^2 u g 
      -\fint_{\Boule_R} \eta^2 f \cdot \nabla u - 2 \fint_{\Boule_R} \eta u f \cdot \nabla \eta.
    \end{align*}
    Using the uniform ellipticity \eqref{Ellipticite} of $a$ for the l.~h.~s.\ and the Cauchy-Schwarz inequality for the r.~h.~s.\ (recalling that $\eta \leq 1$ and $|\nabla \eta| \lesssim R^{-1}$), we obtain
    \begin{align*}
      &\fint_{\Boule_R} \big|\big( \frac{\eta u}{\sqrt{T}}, \eta \nabla u \big) \big|^2
      \\
      &\lesssim
      \Big(\fint_{\Boule_R} \big(\frac{u}{R}\big)^2\Big)^{\frac{1}{2}}
      \Big(\fint_{\Boule_R} |\eta \nabla u|^2\Big)^{\frac{1}{2}}
      +\Big(\fint_{\Boule_R} \big(\frac{g}{\sqrt{T}}\big)^2 \Big)^{\frac{1}{2}} \Big(\fint_{\Boule_R}  \big( \frac{\eta u}{\sqrt{T}}\big)^2 \Big)^{\frac{1}{2}}
      \\
      &~~~~+
      \Big(\fint_{\Boule_R} |f|^2\Big)^{\frac{1}{2}}
      \Big(\fint_{\Boule_R} |\eta \nabla u|^2\Big)^{\frac{1}{2}}
      +\Big(\fint_{\Boule_R} |f|^2\Big)^{\frac{1}{2}}
      \Big(\fint_{\Boule_R} \big(\frac{u}{R}\big)^2\Big)^{\frac{1}{2}}
      .
    \end{align*}
    By the Young inequality, we may absorb the r.~h.~s.\ integrals $\fint_{\Boule_R} |\eta \nabla u|^2$ and $\fint_{\Boule_R}  \big( \frac{\eta u}{\sqrt{T}}\big)^2$ into the l.~h.~s., obtaining \eqref{Caccioppoli3} for $c=0$ (since $\eta=1$ in $\Boule_{R/2}$).
  \end{proof}

\subsection{A Sobolev estimate}\label{ProofE}

  \begin{proof}[Proof of \eqref{wg04}]
    The basis for (\ref{wg04}) is a Sobolev embedding in the form of
    \begin{align}\label{E1}
      \sup_{\Boule_{1}(x)}\zeta^2\lesssim
      \Big(\int_{\Boule_2(x)}\sum_{n=0}|\nabla^n\zeta|^2\Big)^\frac{1}{2}
      \Big(\int_{\Boule_2(x)}\zeta^2\Big)^\frac{1}{2},
    \end{align}
    which by Cauchy-Schwarz, an identity of the type of \eqref{Num:556}, and Plancherel leads to
    \begin{align}
    \label{Estim04}
    \int  \sup_{\Boule_1(x)}\zeta^2 \dd x\lesssim
    \Big(\int(1+|k|)^{2d}|\FF \zeta|^2\dd k\Big)^\frac{1}{2}
    \Big(\int|\FF \zeta|^2\dd k\Big)^\frac{1}{2}.
    \end{align}
    Under additional conditions on the support of the Fourier transform in form of $\Supp(\FF \zeta)\subset\{1\le R|k|<2\}$, 
    this yields
    \begin{align*}
      \int \sup_{\Boule_1(x)}\zeta^2 \dd x\lesssim
      R^{2\alpha}\int(1+|k|)^{d+2\alpha}|\FF \zeta|^2\dd k
    \end{align*}
    and if $\Supp(\FF \zeta)	\subset\{|k|<1\}$, then
    \begin{align*}
      \int \sup_{\Boule_1(x)}\zeta^2 \dd x\lesssim
      \int(1+|k|)^{d+2\alpha}|\FF \zeta|^2\dd k.
    \end{align*}
    Hence (\ref{wg04}) follows from decomposing $\zeta$ in Fourier space
    accordingly (\textit{i.e.} with $R=2^{-n}$, $n=0, 1, \cdots$) 
    and using the triangle inequality on the l.~h.~s.~norm in \eqref{Estim04}.
    The sum over $n$ converges since $\alpha>0$.

    Here comes the proof of \eqref{E1}.
    Without loss of generality, we may assume that $\zeta$ has a support in $\Boule_2$ (if not, we may just multiply it by a cut-off function and appeal to the Leibniz' rule).
    Then, using the Fourier transform\footnote{see footnote \ref{fref} p.\ \thecompteur~for the normalization of the Fourier transform} and separating small and large wavelengths according to a parameter $R>0$, we obtain
    \begin{align*}
      \sup_{\Boule_1(x)}|\zeta|
      &\leq 
      \frac{1}{(2\pi)^d}\int |\FF \zeta|
      =\frac{1}{(2\pi)^d} \Big(\int_{|k| \leq R} |\FF \zeta| \dd k + \int_{|k|>R} |\FF \zeta| \dd k\Big)
      \\
      &\lesssim R^{\frac{d}{2}} \Big( \int |\FF \zeta|^2  \dd k\Big)^{\frac{1}{2}} + R^{-\frac{d}{2}} \Big( \int (1+|k|)^{2d} |\FF \zeta|^2  \dd k\Big)^{\frac{1}{2}}
      \\
      &\lesssim R^{\frac{d}{2}} \Big(\int_{\Boule_2(x)}\zeta^2\Big)^\frac{1}{2}
       + R^{-\frac{d}{2}} \Big(\int_{\Boule_2(x)}(|\nabla^d\zeta|^2+\zeta^2)\Big)^\frac{1}{2}.
    \end{align*}
    We now obtain \eqref{E1} by optimizing in $R$.
  \end{proof}

\subsection{Small scale regularity results}

\subsubsection{H\"older regularity of the Gaussian field}\label{SecProofLemloc}
  
    \begin{proof}[Proof of \eqref{wg03}]
      We first argue that it is enough to establish
      \begin{align}\label{Besov}
      \langl (g(h)-g(0))^2\rangl\lesssim|h|^{2\alpha}.
      \end{align}
      Indeed, since $g$ is Gaussian, \eqref{Besov} yields $\langl |g(h)-g(0)|^r\rangl^{\frac{1}{r}}\lesssim|h|^{\alpha}$, likewise, $\langl g^2(0) \rangl$ $ =c(0) \lesssim 1$ yields $\langl |g(0)|^r \rangl^{\frac{1}{r}} \lesssim 1$.
      Hence, by stationarity, we obtain
      \begin{align*}
	\Big\langl \int_{\Boule_2} \int_{\Boule_2} \Big(\frac{|g(x+h)-g(x)|}{|h|^{\alpha''}}\Big)^{r} \frac{\dd h}{|h|^d} \dd x + \int_{\Boule_2} |g|^r \Big\rangl
	\lesssim 1
      \end{align*}
      for all $r \in [1,\infty)$ and $\alpha''<\alpha$, which with help of a smooth cut-off function $\eta$ for $\Boule_1$ in $\Boule_2$ we rewrite as
      \begin{align*}
        \Big\langl \int_{\R^d} \int_{\Boule_2} \Big(\frac{|(\eta g)(x+h)-(\eta g)(x)|}{|h|^{\alpha''}}\Big)^{r} \frac{\dd h}{|h|^d} \dd x + \int_{\R^d} |\eta g|^r \Big\rangl
	\lesssim 1.
      \end{align*}
      This amounts to an estimate of $\eta g$ in the Besov space $\Besov^{\alpha''}_{r,r}(\R^d)$ \textit{cf.} \cite[Th.\ 2.36, p.\ 74]{Bahouri}, which embeds into a H\"older space $\CC^{\alpha'}(\R^d)$ for any $\alpha' < \alpha ''$ provided $r$ is sufficiently large (see \cite[Prop.\ 2.71, p.\ 99]{Bahouri} and \cite[p.\ 99, Examples]{Bahouri}).
      Therefore, \eqref{Besov} implies \eqref{wg03}.
      
      Estimate \eqref{Besov} can be seen as follows:
      \begin{align*}
      \langl (g(x)-g(0))^2\rangl
      &~=2c(0)-c(x)-c(-x)\\
      &~=\frac{4}{(2\pi)^d}\int \sin^2\big(\frac{k\cdot x}{2}\big)(\FF  c) \dd k
      \\
      &\stackrel{(\ref{wg01})}{\le} \frac{4}{(2\pi)^d}\int \min\Big\{\frac{|k|^2|x|^2}{4},1\Big\}|k|^{-d-2\alpha}\dd k
      \\
      &~= \frac{4}{(2\pi)^d}|x|^{2\alpha}\int \min\{|\hat k|^2,1\} |\hat k|^{-d-2\alpha}\dd\hat k,
      \end{align*}
      where the last integral converges because of $\alpha\in(0,1)$.
    \end{proof}

\subsubsection{Small-scale annealed CZ estimates for ensembles of H\"older continuous coefficients}

    We recall the norms  \eqref{Num:701} and \eqref{Ann_ap20} and state the following:
    \begin{lemma}\label{Lem_AnnealedCZ_2}
      Let the ensemble $\langl \cdot \rangl$ be defined as in Section \ref{SecAssumpGauss} and $T \geq 1$.
      Assume that the square-integrable functions $u$, $f$, and $g$ are related through
      \begin{equation}\label{Num:905}
	\frac{1}{T} u -\nabla\cdot a\nabla u=\frac{1}{T}g+\nabla \cdot f.
      \end{equation}
      (i) For $q,p\in(1,\infty)$ and $1\le r'<r<\infty$ we have
      \begin{equation}\label{CZstar_2}
      \begin{aligned}
        \big\|\big(\frac{u}{\sqrt{T}},\nabla u\big)\big\|_{p,r'}
	\lesssim_{\gamma,q,p,r',r}
	& \big\|\big(\frac{u}{\sqrt{T}},\nabla u\big)\big\|_{p,r,q}
	+\big\|\big(\frac{g}{\sqrt{T}},f\big)\big\|_{p,r}.
      \end{aligned}
      \end{equation}
      (ii) For $q\in(1,2]$ and $r>2$ we have
      \begin{equation}\label{Num:116}
	\begin{aligned}
	  &\big\|\big(\frac{u}{\sqrt{T}},\nabla u\big)\big\|_{2,2,q}
	  \lesssim_{\gamma,q,r}
	  \big\|\big(\frac{g}{\sqrt{T}},f\big)\big\|_{2,r,q}.
	\end{aligned}
      \end{equation}
    \end{lemma}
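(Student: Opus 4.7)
\textbf{Proof plan for Lemma \ref{Lem_AnnealedCZ_2}.}

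The plan is to deduce both estimates from two small-scale ingredients: (a) classical interior Schauder/Calder\'on-Zygmund regularity for the massive operator $\frac{1}{T}-\nabla\cdot a\nabla$ on balls of radius $\leq 1$, with constants depending \emph{polynomially} on $\|a\|_{\CC^{\alpha'}(\Boule_1)}$ (the content of Lemma~\ref{LemAppendHold}, already invoked in the body of the paper), and (b) the high-integrability \eqref{wg03} of $\|a\|_{\CC^{\alpha'}(\Boule_1)}$ that is at our disposal under the assumptions of Section~\ref{SecAssumpGauss}.

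\emph{Part (i).} Set $h:=(u/\sqrt{T},\nabla u)$ and $F_\mathrm{rhs}:=(g/\sqrt{T},f)$. On each ball $\Boule_2(x)$, local Schauder regularity in the spirit of Lemma~\ref{LemAppendHold} yields the pointwise bound
\begin{align*}
|h(x)|^{r'}
\lesssim
\big(1+\|a\|_{\CC^{\alpha'}(\Boule_2(x))}\big)^{C(r',q,\alpha')}
\Big[\Big(\fint_{\Boule_2(x)}|h|^q\Big)^{r'/q}+\fint_{\Boule_2(x)}|F_\mathrm{rhs}|^{r'}\Big].
\end{align*}
I would then take the expectation, and apply H\"older's inequality in probability with the conjugate pair $(s^*,s)=(r/r',r/(r-r'))$ so that the Hölder-factor $\langle(1+\|a\|_{\CC^{\alpha'}})^{Cs}\rangle^{1/s}$ is swallowed thanks to \eqref{wg03}, while the spatial-average term gets upgraded to inner exponent $r$:
\begin{align*}
\langle |h(x)|^{r'}\rangle^{1/r'}
\lesssim
\Big\langle\Big(\fint_{\Boule_2(x)}|h|^q\Big)^{r/q}\Big\rangle^{1/r}
+\Big\langle\fint_{\Boule_2(x)}|F_\mathrm{rhs}|^r\Big\rangle^{1/r}.
\end{align*}
Raising to the $p$-th power, integrating in $x$, using the commutation identity \eqref{Num:556} on both terms to replace $\fint_{\Boule_2(x)}$ by $\fint_{\Boule_1(x)}$ up to a finite covering, and finally applying Jensen to swap $\langle\fint\cdot\rangle^{1/r}$ for $\langle\cdot\rangle^{1/r}$ on the forcing gives \eqref{CZstar_2}.

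\emph{Part (ii).} The scheme is the same, but now the goal is to trade spatial smoothness against stochastic integrability. On $\Boule_2(x)$ I would apply the local $L^q$-estimate for Hölder coefficients
\begin{align*}
\Big(\fint_{\Boule_1(x)}|h|^q\Big)^{1/q}
\lesssim
\big(1+\|a\|_{\CC^{\alpha'}(\Boule_2(x))}\big)^{C}
\Big[\Big(\fint_{\Boule_2(x)}|h|^{q_0}\Big)^{1/q_0}
+\Big(\fint_{\Boule_2(x)}|F_\mathrm{rhs}|^q\Big)^{1/q}\Big],
\end{align*}
for some $q_0<q$ to be selected. Squaring, taking expectation, and invoking H\"older with $(s^*,s)=(r/2,r/(r-2))$ — so that the $\|a\|_{\CC^{\alpha'}}$-factor is absorbed via \eqref{wg03} — yields after integration in $x$
\begin{align*}
\|h\|_{2,2,q}^2
\lesssim
\|h\|_{2,r,q_0}^2+\|F_\mathrm{rhs}\|_{2,r,q}^2.
\end{align*}
The remaining $\|h\|_{2,r,q_0}$ is what I expect to be the main obstacle. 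I plan to close it by choosing $q_0$ small enough (below Meyers' exponent), rewriting the equation as $\frac{1}{T}(Gu)-\nabla\cdot a\nabla(Gu)=\frac{1}{T}(Gg)+\nabla\cdot(Gf)$ for an auxiliary scalar random variable $G$, and applying the pivotal weighted $L^2$-energy estimate \eqref{Ann_ap34} together with a Minkowski/duality rearrangement of the nested $L^{q_0}_xL^r_\omega$ and $L^r_\omega L^{q_0}_x$ norms. This, combined with the spatial Jensen $\|h\|_{2,r,q_0}\le\|h\|_{2,r,q}$ and the relation $\|F_\mathrm{rhs}\|_{2,r,2}\lesssim\|F_\mathrm{rhs}\|_{2,r,q}$ after a further use of local regularity applied to $F_\mathrm{rhs}$ on $\Boule_1(x)$, will allow absorbtion of $\|h\|_{2,r,q_0}^2$ into the l.\ h.\ s.\ and give \eqref{Num:116}.

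Throughout, the main technical point is keeping track of the exponents so that H\"older in probability can always feed on the polynomial dependence of the regularity constants on $\|a\|_{\CC^{\alpha'}(\Boule_1)}$ — for which the any-moment bound \eqref{wg03} is crucial — and so that the factors of $T$ remain harmless when $T\ge 1$ (all scales of interest being $\leq\sqrt{T}$).
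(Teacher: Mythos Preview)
Your Part (i) has a genuine gap. The pointwise bound you write,
\[
|h(x)|\lesssim(1+\|a\|_{\CC^{\alpha'}(\Boule_2(x))})^C\Big[\Big(\fint_{\Boule_2(x)}|h|^q\Big)^{1/q}+\Big(\fint_{\Boule_2(x)}|F_{\mathrm{rhs}}|^{r'}\Big)^{1/r'}\Big],
\]
is false in general: with only $L^{r'}$-integrability of the forcing you cannot control $\nabla u$ pointwise unless $r'>d$, whereas \eqref{CZstar_2} is claimed for all $r'\ge 1$. (Already for $-\Delta u=\nabla\cdot f$ one does not have $\nabla u\in L^\infty_{\loc}$ from $f\in L^{r'}$ when $r'\le d$.) Lemma~\ref{LemAppendHold} only gives such a supremum bound in the Schauder form \eqref{cw18}, which needs $g\equiv0$ and $f\in\CC^\alpha$; the CZ form \eqref{cw15} is an $L^p$-to-$L^p$ bound on the forcing, not $L^{r'}$-to-$L^\infty$. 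The paper circumvents precisely this obstruction by a splitting that you are missing: it introduces a stationary random scale $\rho_\star(x)\le 1$ on which $a$ oscillates by at most a small $\delta$, freezes the coefficient there to define $\tilde u_x$ via a near-constant-coefficient whole-space problem, and applies the pointwise Schauder bound only to the \emph{homogeneous} difference $u-\tilde u_x$ in $\Boule_{\rho_\star(x)}(x)$. The piece $\tilde u_x$, which carries the rough forcing, is handled instead by a Meyers-type annealed CZ estimate in $\|\cdot\|_{p,r''}$; the passage from these local bounds to $\|\cdot\|_{p,r'}$ goes through a separate moment estimate for $\rho_\star$ (their \eqref{Num:019}).

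Your Part (ii) is closer in spirit but the closure is not right as written. After your first step you obtain $\|h\|_{2,2,q}\lesssim\|h\|_{2,r,q_0}+\|F_{\mathrm{rhs}}\|_{2,r,q}$ with $r>2$ on the right, and there is no mechanism to absorb $\|h\|_{2,r,q_0}$ into $\|h\|_{2,2,q}$ since the stochastic exponent has increased. The paper instead argues by \emph{duality}: it is equivalent to show $\|h\|_{2,r,p}\lesssim\|F_{\mathrm{rhs}}\|_{2,2,p}$ for $p\ge 2$ and $r<2$. One applies \eqref{cw15} with $q=2$ on $\Boule_2(x)$, takes the $\LL^r_{\langle\cdot\rangle}$-norm (using H\"older against \eqref{wg03} since now $r<2$), then the $\LL^2_x$-norm; the resulting term $\|h\|_{2,2,2}=\|h\|_{2,2}$ is disposed of directly by the global energy estimate \eqref{Ann_ap97}. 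No buckling or auxiliary $q_0$ is needed.
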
    
    
    \begin{proof}
      We split the proof of Lemma \ref{Lem_AnnealedCZ_2} into two independent parts.
      
      \parag{Part 1: Proof of \eqref{CZstar_2}}
      Let $q,p, r'$ be as in the statement, and choose $r'' \in (r',r)$.
      W.~l.~o.~g.\ we may assume that $q \leq\min\{r',p\}$ (the other case $q \geq\min\{r',p\}$ can be recovered from the former one by Jensen's inequality).
      
      The proof is divided into three steps.
      In Step 1, we define a (small) random radius $\rho_\star$ below which we may apply classical regularity theory.
      In Step 2, we define a splitting of $u$ by means of which we establish \eqref{CZstar_2}, admitting the estimate that for any $F=F(a)$ and $y \in \R^d$
      \begin{equation}
	\label{Num:019}
	\big\langle |F|^{r'} \big\rangle^{\frac{p}{r'}}
	\lesssim
	\int \big\langle \mathds{1}_{\Boule_{\frac{\rho_\star(x)}{2}}(x)}(y) |F|^{r''} \big\rangle^{\frac{p}{r''}} \dd x,
      \end{equation} 
      which we derive in Step 3.
      \parag{Part 1, Step 1: Definition of the radius $\rho_\star$}
	Let $\delta \leq 1$ (to be fixed in Step 2). 
	We introduce the (local) radius below which $a$ does not vary much:
	\begin{equation}
	  \label{Def_rhostar}
	  \rho_\star(x):= \sup \Big\{ \rho \leq 1, \sup_{y,y' \in \Boule_\rho(x)} |a(y)-a(y')| \leq \delta \Big\}.
	\end{equation}
	Note that by \eqref{wg03} we have $0<\rho_\star(x) \leq 1$ $\langl\cdot\rangl$-almost-surely; furthermore, $\rho_\star$ is stationary.
	We claim that $\rho_\star$ is bounded by below in the sense of
	\begin{equation}
	  \label{Num:012}
	  \langl \rho_\star^{-s} \rangl^{\frac{1}{s}} \lesssim_{\gamma,s,\delta} 1 \pourtout s \in [1,\infty).
	\end{equation}
	
	By stationarity, we may assume that $x=0$ (we omit this variable in the expressions below), and estimate $\rho_\star$ by the easier quantity
	\begin{align*}
	  \tilde{\rho}_\star:= \min\Big\{\frac{1}{2}\Big(\frac{\delta}{\|a\|_{\CC^{\frac{\alpha}{2}}(\Boule_1)}} \Big)^{\frac{2}{\alpha}},1\Big\}.
	\end{align*}
	Indeed we have $\rho_\star \geq \tilde{\rho}_\star$ because, if $y, y' \in \Boule_{\tilde{\rho}_\star} \subset \Boule_{1}$ then
	\begin{equation*}
	  |a(y)-a(y')|
	  \leq \|a\|_{\CC^{\frac{\alpha}{2}}(\Boule_1)} |y-y'|^{\frac{\alpha}{2}}
	  \leq \|a\|_{\CC^{\frac{\alpha}{2}}(\Boule_1)} (2\tilde{\rho}_\star)^{\frac{\alpha}{2}}
	  \leq \delta.
	\end{equation*}
	Therefore \eqref{Num:012} is a consequence of \eqref{wg03}:
	\begin{equation*}
	  \begin{aligned}
	    \langl \rho_\star^{-s} \rangl^{\frac{1}{s}} 
	    \leq \langl \tilde{\rho}_\star^{-s} \rangl^{\frac{1}{s}} 
	    \lesssim_{\gamma,s,\delta} \big\langl \|a\|_{\CC^{\frac{\alpha}{2}}(\Boule_1)}^{\frac{2s}{\alpha}} \big\rangl^{\frac{1}{s}}
	    \overset{\eqref{wg03}}{\lesssim}_{\gamma,s,\delta} 1.
	  \end{aligned}
	\end{equation*}	
	
      \parag{Part 1, Step 2: Splitting and argument for \eqref{CZstar_2}}
	We introduce
	\begin{equation*}
	  \begin{aligned}
	    &\Boule_\star(x):=\Boule_{\rho_\star(x)}(x),
	    \quad \tilde{g}_x:=\mathds{1}_{\Boule_1(x)}g,
	    \quad\tilde{f}_x:= \mathds{1}_{\Boule_1(x)}f,
	    \\
	    &\text{and} \qquad
	    \tilde{a}_x(y):=
	    \begin{cases}
	      a(y) & \text{if } y \in \Boule_\star(x),
	      \\
	      a(x) & \text{otherwise}.
	    \end{cases}
	  \end{aligned}
	\end{equation*}
	Accordingly, we freeze the variable $x$ and define $\tilde{u}_x$ as the solution to
	\begin{equation*}
	  \frac{1}{T} \tilde{u}_x -\nabla \cdot \tilde{a}_x \nabla \tilde{u}_x=\frac{1}{T}\tilde{g}_x+ \nabla \cdot \tilde{f}_x,
	\end{equation*}
	where the operator $\nabla$ acts on the variable $y$.
	This combines with \eqref{Num:905} to
	\begin{equation}\label{Num:013}
	  \frac{1}{T}(\tilde{u}_x-u) - \nabla \cdot a \nabla (\tilde{u}_x-u)=0 \dans \Boule_\star(x).
	\end{equation}
	
	First, we estimate $\tilde{u}_x$.
	Since by definition \eqref{Def_rhostar} of $\rho_\star$, we have $\|\tilde{a}_x-a(x)\|_{\LL^\infty} \leq \delta$, by a perturbative argument \textit{\`a la Meyers} as in the proof Proposition \ref{PropCZ}(i), there holds for  $\delta \ll_{d,\lambda,p,q,r''} 1$
	\begin{align}
	  \label{Num:011}
	  \int \big| \big(\frac{\tilde{u}_x}{\sqrt{T}}, \nabla \tilde{u}_x\big) \big|^q 
	  &\lesssim \int_{\Boule_1(x)}  \big|\big(\frac{g}{\sqrt{T}},f\big)\big|^q,
	  \\
	  \label{Num:010}
	  \int \big\langle \big|\big(\frac{\tilde{u}_x}{\sqrt{T}},\nabla \tilde{u}_x\big) \big|^{r''} \big\rangle^{\frac{p}{{r''}}}
	  &\lesssim \int_{\Boule_1(x)} \big\langle \big|\big(\frac{g}{\sqrt{T}},f\big) \big|^{r''} \big\rangle^{\frac{p}{{r''}}}.
	\end{align}
	
	Second, we estimate $\nabla u -\nabla \tilde{u}_x$.
	Using Lemma \ref{LemAppendHold}, more precisely a rescaled version of \eqref{Num:031_bis} for $\alpha \rightsquigarrow \frac{\alpha}{2}$, on \eqref{Num:013}, the triangle inequality and \eqref{Num:011}, there also holds
	\begin{equation*}
	\begin{aligned}
	  &\sup_{\frac{1}{2} \Boule_\star(x)} \Big|\Big(\frac{u-\tilde{u}_x}{\sqrt{T}},\nabla u-\nabla \tilde{u}_x\Big)\Big|
	  \\
	  &\qquad\overset{\eqref{Num:031_bis}}{\lesssim}
	  \big(1+\rho_\star^{\frac{\alpha}{2}}(x) \|a\|_{\CC^{\frac{\alpha}{2}}(\Boule_\star(x))}\big)^{\frac{2}{\alpha}(\frac{d}{q}+1)}
	  \Big(\fint_{\Boule_\star(x)} \Big|\Big(\frac{u-\tilde{u}_x}{\sqrt{T}},\nabla u-\nabla \tilde{u}_x\Big)\Big|^q \Big)^{\frac{1}{q}}
	  \\
	  &\quad\overset{\eqref{Def_rhostar},\eqref{Num:011}}{\lesssim}
	  \big(1+\|a\|_{\CC^{\frac{\alpha}{2}}(\Boule_\star(x))}\big)^{\frac{2d}{q\alpha}} \rho_\star^{-\frac{d}{q}}(x) \Big(\fint_{\Boule_1(x)} \big|\big(\frac{g}{\sqrt{T}},f,\frac{u}{\sqrt{T}},\nabla u\big)\big|^q \Big)^{\frac{1}{q}}.
	\end{aligned}
	\end{equation*}
	Taking the $\LL^{r''}_{\langl\cdot\rangl}$-norm of this estimate and using the H\"older inequality followed by \eqref{wg03} and \eqref{Num:012} yields
	\begin{equation}\label{Num:914}
	\begin{aligned}
	  &\Big\langle \sup_{\frac{1}{2} \Boule_\star(x)} \Big|\Big(\frac{u-\tilde{u}_x}{\sqrt{T}},\nabla u-\nabla \tilde{u}_x\Big)\Big|^{r''} \Big\rangle^{\frac{1}{{r''}}}
	  \\
	  &\qquad\qquad\lesssim
	  \Big\langle\Big(\fint_{\Boule_1(x)} \big|\big(\frac{g}{\sqrt{T}},f,\frac{u}{\sqrt{T}},\nabla u\big)\big|^q \Big)^{\frac{r}{q}} \Big \rangle^{\frac{1}{{r}}}.
	\end{aligned}
	\end{equation}
	
	Finally, momentarily fixing a $y\in\R^d$, which we suppress in most of the notation, by \eqref{Num:019} (for $F:=(\frac{u(y)}{\sqrt{T}},\nabla u(y))$) followed by the triangle inequality, there holds
	\begin{align*}
	  \big\langle \big|\big(\frac{u}{\sqrt{T}},\nabla u\big)\big|^{r'} \big\rangle^{\frac{p}{r'}}
	  &\lesssim
	  \int\big\langle \mathds{1}_{\frac{1}{2}\Boule_\star(x)} \big|\big(\frac{\tilde{u}_x}{\sqrt{T}},\nabla \tilde{u}_x\big)\big|^{r''} \big\rangle^{\frac{p}{r''}} \dd x
	  \\
	  &\qquad+\int\big\langle \mathds{1}_{\frac{1}{2}\Boule_\star(x)} \big|\big(\frac{u-\tilde{u}_x}{\sqrt{T}},\nabla u-\nabla \tilde{u}_x\big)\big|^{r''} \big\rangle^{\frac{p}{r''}} \dd x.
	\end{align*}
	Integrating this in the $y$ variable (recall that $\rho_\star \leq 1$), and using \eqref{Num:010}, \eqref{Num:914}, and the definitions of the norms \eqref{Num:701} and \eqref{Ann_ap20} yields
	\begin{align*}
	  \int \big\langle \big|\big(\frac{u}{\sqrt{T}},\nabla u\big)\big|^{r'} \big\rangle^{\frac{p}{r'}}
	  &\lesssim
	  \int\int \big\langle \big|\big(\frac{\tilde{u}_x}{\sqrt{T}},\nabla \tilde{u}_x\big)\big|^{r''} \big\rangle^{\frac{p}{r''}}  \dd x
	  \\
	  &\qquad+\int\int\big\langle \mathds{1}_{\frac{1}{2}\Boule_\star(x)} \big|\big(\frac{u-\tilde{u}_x}{\sqrt{T}},\nabla u-\nabla \tilde{u}_x\big)\big|^{r''} \big\rangle^{\frac{p}{r''}}  \dd x
	  \\
	  &\!\!\!\!\!\!\!\!\overset{\eqref{Num:010}, \eqref{Num:914}}{\lesssim}
	  \int \int_{\Boule_1(x)} \big\langle \big|\big(\frac{g}{\sqrt{T}},f\big) \big|^{r''} \big\rangle^{\frac{p}{{r''}}} \dd x
	  \\
	  &\qquad+
	  \int\Big\langle\Big(\fint_{\Boule_1(x)} \big|\big(\frac{g}{\sqrt{T}},f,\frac{u}{\sqrt{T}},\nabla u\big)\big|^q \Big)^{\frac{r}{q}} \Big \rangle^{\frac{p}{{r}}}\dd x.
	\end{align*}
	By definition of the norms \eqref{Num:701}, this assumes the form
	\begin{align*}
	  \big\| \big(\frac{u}{\sqrt{T}},\nabla u\big) \big\|_{p,r'}
	  \overset{\eqref{Num:556}, \eqref{Ann_ap20}}{\lesssim} \big\| \big(\frac{g}{\sqrt{T}},f\big) \big\|_{p,r''} 
	  +  \big\|\big(\frac{g}{\sqrt{T}},f,\frac{u}{\sqrt{T}},\nabla u\big)\big\|_{p,r,q}.
	\end{align*}
	By the estimates \eqref{Num:03_bis} and \eqref{Num:706} (recall that $r''\leq r$ and $q \leq \min\{p,r\}$), we have
	\begin{equation*}
	  \big\| \big(\frac{g}{\sqrt{T}},f\big) \big\|_{p,r''} + \big\| \big(\frac{g}{\sqrt{T}},f\big) \big\|_{p,r,q} \lesssim \big\| \big(\frac{g}{\sqrt{T}},f\big) \big\|_{p,r}.
	\end{equation*}
	This yields the desired estimate \eqref{CZstar_2}.
	
      \parag{Part 1, Step 3: Argument for \eqref{Num:019}}
	By the H\"older inequality in space (based on $\rho_\star \leq 1$) followed by Jensen's inequality, there holds
	\begin{equation}\label{Num:021}
	  \begin{aligned}
	    \Big(\int \big\langl \mathds{1}_{\frac{1}{2}\Boule_\star(x)}(y) |F|^{r''} \big\rangl^{\frac{p}{{r''}}}\dd x \Big)^{\frac{1}{p}}
	    &\gtrsim
	    \int \big\langl \mathds{1}_{\frac{1}{2}\Boule_\star(x)}(y) |F|^{r''} \big\rangl^{\frac{1}{{r''}}}\dd x
	    \\
	    &\geq
	    \Big\langl \Big( \int \mathds{1}_{\frac{1}{2}\Boule_\star(x)}(y)\dd x |F|\Big)^{r''} \Big\rangl^{\frac{1}{{r''}}}.
	  \end{aligned}
	\end{equation}
	Note that if $x \in \frac{1}{3}\Boule_\star(y)$ then $\Boule_{\frac{2}{3}\rho_\star(y)}(x) \subset \Boule_{\star}(y)$.
	Indeed, if $z \in \Boule_{\frac{2}{3}\rho_\star(y)}(x)$, then we have by the triangle inequality $|z-y| \leq |z-x| + |x-y|< \rho_\star(y)$.
	Therefore, by definition \eqref{Def_rhostar} of $\rho_\star$, this
	implies that $\rho_\star(x) \geq \frac{2}{3}\rho_\star(y)$ and in particular $y \in \frac{1}{2}\Boule_\star(x)$.
	As a consequence, there holds
	\begin{equation*}
	    \int \mathds{1}_{\frac{1}{2}\Boule_\star(x)}(y)\dd x 
	    \geq \int_{\frac{1}{4}\Boule_\star(y)}\dd x 
	    \gtrsim \rho_\star(y)^d.
	\end{equation*}
	Inserting this estimate into \eqref{Num:021}, we get
	\begin{equation}\label{Num:022}
	\begin{aligned}
	  \langl  \rho_\star(y)^{{r''}d} |F|^{r''} \rangl^{\frac{1}{{r''}}}
	  \lesssim
	  \Big(\int \big\langl \mathds{1}_{\frac{1}{2}\Boule_\star(x)}(y) |F|^{r''} \big\rangl^{\frac{p}{{r''}}}\dd x \Big)^{\frac{1}{p}}.
	\end{aligned}
	\end{equation}
	Finally, by the H\"older inequality we obtain
	\begin{equation*}
	    \langl |F|^{r'} \rangl^{\frac{1}{r'}}
	    =
	    \langl (\rho_\star(y)^{d} |F|)^{r'} 
	    \rho_\star(y)^{-d r'} \rangl^{\frac{1}{r'}}
	    \leq \langl \rho_\star(y)^{{r''}d} |F|^{{r''}} \rangl^{\frac{1}{{r''}}}  \big\langl \rho_\star(y)^{-d\frac{r'r''}{{r''}-r'}}\big\rangl^{\frac{{r''}-r'}{{r''} r'}},
	\end{equation*}
	whence, by \eqref{Num:012} (for $s \rightsquigarrow d\frac{r'r''}{{r''}-r'}$),
	\begin{equation*}
	    \langl |F|^{r'} \rangl^{\frac{1}{r'}}
	    \lesssim \big\langl \rho_\star(y)^{r'' d} |F|^{{r''}} \big\rangl^{\frac{1}{{r''}}}.
	\end{equation*}
	Inserting \eqref{Num:022} into the above estimate, we obtain \eqref{Num:019}.

      \parag{Part 2: Proof of \eqref{Num:116}} 
	By duality, it is sufficient to establish
	for $p\in[2,\infty)$ and $r<2$ that
	\begin{align}\label{cw20}
	\big\|\big(\frac{u}{\sqrt{T}},\nabla u\big)\big\|_{2,r,p}\lesssim\big\|\big(\frac{g}{\sqrt{T}},f\big)\big\|_{2,2,p}.
	\end{align}
	In order to establish this, we appeal to Lemma \ref{LemAppendHold}, more precisely (\ref{cw15}) for $q=2$, 
	with $\alpha$ replaced by some $0<\alpha'<\alpha$ (which we now fix),
	and with the origin replaced by a general point $x\in\mathbb{R}^d$:
	\begin{align*}
	\lefteqn{\big\|\big(\frac{u}{\sqrt{T}},\nabla u\big)\big\|_{\LL^p(\Boule_1(x))}}\nonumber\\
	&\qquad\lesssim
	\|a\|_{\CC^{\alpha'}(\Boule_2(x))}^{\frac{1}{\alpha'}(\frac{d}{2}-\frac{d}{p})}
	\big\|\big(\frac{u}{\sqrt{T}},\nabla u\big)\big\|_{\LL^2(\Boule_2(x))}
	+\big\|\big(\frac{g}{\sqrt{T}},f\big)\big\|_{\LL^p(\Boule_2(x))}.
	\end{align*}
	We first take the $\LL^r_{\langle\cdot\rangle}$-norm; by \eqref{wg03} in Lemma \ref{LemSG}
	(note that the shift by $x$ is irrelevant by stationarity) in conjunction with 
	H\"older's inequality (recall $r<2$) for the first r.~h.~s.~term 
	(and Jensen's inequality on the second) we obtain
	\begin{align*}
	\lefteqn{\big\|\big\|\big(\frac{u}{\sqrt{T}},\nabla u\big)\big\|_{\LL^p(\Boule_1(x))}\big\|_{\LL^r_{\langle\cdot\rangle}}}
	\nonumber\\
	&\qquad\lesssim
	\big\|\big\|\big(\frac{u}{\sqrt{T}},\nabla u\big)\big\|_{\LL^2(\Boule_2(x))}\big\|_{\LL^2_{\langle\cdot\rangle}}
	+\big\|\big\|\big(\frac{g}{\sqrt{T}},f\big)\big\|_{\LL^p(\Boule_2(x))}\big\|_{\LL^2_{\langle\cdot\rangle}}.
	\end{align*}
	We next take the $\LL^2(\mathbb{R}^d)$-norm (in $x$) of this: On the l.~h.~s.~we use the definition \eqref{Ann_ap20} and on both r.~h.~s.~terms we use \eqref{Ann_ap23}, to the effect of
	\begin{align*}
	\big\|\big(\frac{u}{\sqrt{T}},\nabla u\big)\big\|_{2,r,p}\lesssim
	\big\|\big(\frac{u}{\sqrt{T}},\nabla u\big)\big\|_{2,2,2}
	+\big\|\big(\frac{g}{\sqrt{T}},f\big)\big\|_{2,2,p}.
	\end{align*}
	It remains to appeal to the energy estimate \eqref{Ann_ap97}, H\"older's inequality in the inner space variable, in conjunction with a version of \eqref{Num:556} to obtain \eqref{cw20}:
	\begin{align*}
	\big\|\big(\frac{u}{\sqrt{T}},\nabla u\big)\big\|_{2,2,2} &\stackrel{\eqref{Num:556}}{=}
	\big\|\big(\frac{u}{\sqrt{T}},\nabla u\big)\big\|_{2,2}\\
	&\stackrel{\eqref{Ann_ap97}}{\lesssim}
	\big\|\big(\frac{g}{\sqrt{T}},f\big)\big\|_{2,2}\stackrel{\eqref{Num:556}}{=}
	\big\|\big(\frac{g}{\sqrt{T}},f\big)\big\|_{2,2,2}\lesssim\big\|\big(\frac{g}{\sqrt{T}},f\big)\big\|_{2,2,p}.
	\end{align*}
    \end{proof}

\subsubsection{Local estimates for H\"older continuous coefficients}

  The interest of the local CZ and Schauder estimates below is the explicit, polynomial dependence on the H\"older norm of the coefficient field $a$. A secondary aspect is the uniformity in the massive parameter $T$. The ingredients are classical; we give an argument for the convenience of the reader.

  \begin{lemma}\label{LemAppendHold}
    Let $\alpha \in (0,1)$, and let $a \in \CC^{\alpha}(\Boule_1)$ satisfy~\eqref{Ellipticite}. 
    For $T\ge 1$,  we assume that the functions $u$, $g$, and the vector field $f$ satisfy the relation
    \begin{align}\label{cw09}
    \frac{1}{T}u-\nabla\cdot a\nabla u=\frac{1}{T}g+\nabla\cdot f\quad\mbox{in}\;\Boule_2.
    \end{align}
    For $1<q\le p<\infty$ we claim that
    \begin{equation}\label{cw15}
      \begin{aligned}
        \lefteqn{\big\|\big(\frac{u}{\sqrt{T}},\nabla u\big)\big\|_{\LL^p(\Boule_1)}}
        \\
	&\qquad\lesssim_{d,\lambda,\alpha,p,q}
	\|a\|_{\CC^\alpha(\Boule_2)}^{\frac{1}{\alpha}(\frac{d}{q}-\frac{d}{p})}
	\big\|\big(\frac{u}{\sqrt{T}},\nabla u\big)\big\|_{\LL^q(\Boule_2)}
	+\big\|\big(\frac{g}{\sqrt{T}},f\big)\big\|_{\LL^p(\Boule_2)},    
      \end{aligned}
    \end{equation}
    and, provided $g\equiv0$,
    \begin{equation}\label{cw18}
      \begin{aligned}
	\lefteqn{\big\|\big(\frac{u}{\sqrt{T}},\nabla u\big)\big\|_{\CC^{\alpha}(\Boule_1)}}
	\\
	&\qquad\lesssim_{d,\lambda,\alpha,q}
	\|a\|_{\CC^\alpha(\Boule_2)}^{\frac{1}{\alpha}(\frac{d}{q}+1)}
	\big\|\big(\frac{u}{\sqrt{T}},\nabla u\big)\big\|_{\LL^q(\Boule_{2})}
	+\|a\|_{\CC^\alpha(\Boule_2)}^{\frac{1}{\alpha}-1}\|f\|_{\CC^{\alpha}(\Boule_{2})}.
      \end{aligned}
    \end{equation}
  \end{lemma}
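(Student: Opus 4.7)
The plan is to reduce both estimates to their constant-coefficient analogues by freezing $a$ at a point, treating the discrepancy $a-a(x_0)$ as a small perturbation, and patching local estimates by a covering of $B_1$. The fundamental length scale is
\begin{equation*}
\rho := c\,\min\{\|a\|_{\CC^\alpha(\Boule_2)}^{-1/\alpha},1\}
\end{equation*}
with $c = c(d,\lambda)\ll 1$ to be chosen below, so that for every $x_0\in\Boule_1$ one has $\sup_{\Boule_\rho(x_0)}|a-a(x_0)|\le c^\alpha$. This choice is what will ultimately produce the exponent $\frac{1}{\alpha}(\frac{d}{q}-\frac{d}{p})$ in \eqref{cw15}.

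For \eqref{cw15}, I would freeze the coefficient on $\Boule_\rho(x_0)$ and rewrite \eqref{cw09} as
\begin{equation*}
\tfrac{1}{T}u-\nabla\cdot a(x_0)\nabla u=\tfrac{1}{T}g+\nabla\cdot\bigl(f+(a-a(x_0))\nabla u\bigr).
\end{equation*}
The local constant-coefficient $\LL^p$ regularity theory for $\frac{1}{T}-\nabla\cdot a(x_0)\nabla$ (which follows from Lemma \ref{LemCstMassCZ} by introducing a cut-off and using the Caccioppoli estimate \eqref{Caccioppoli3} to control the commutator) then yields, after a single step of scale reduction from $\Boule_\rho(x_0)$ to $\Boule_{\rho/2}(x_0)$,
\begin{equation*}
\bigl\|\bigl(\tfrac{u}{\sqrt{T}},\nabla u\bigr)\bigr\|_{\LL^p(\Boule_{\rho/2}(x_0))}
\lesssim c^\alpha\|\nabla u\|_{\LL^p(\Boule_\rho(x_0))}
+\bigl\|\bigl(\tfrac{g}{\sqrt{T}},f\bigr)\bigr\|_{\LL^p(\Boule_\rho(x_0))}
+\rho^{\frac{d}{p}-\frac{d}{q}}\bigl\|\bigl(\tfrac{u}{\sqrt{T}},\nabla u\bigr)\bigr\|_{\LL^q(\Boule_\rho(x_0))}.
\end{equation*}
Taking a Vitali subcover of $\Boule_1$ by balls $\Boule_{\rho/2}(x_k)$ and summing the $p$-th powers, the first right-hand term is absorbed into the left, provided $c$ was chosen so small that $c^\alpha$ times the bounded-overlap constant is below $1/2$. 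The remaining $\LL^q$ factor carries the scaling $\rho^{d/p-d/q}\sim\|a\|_{\CC^\alpha(\Boule_2)}^{(d/q-d/p)/\alpha}$, which is precisely the factor in \eqref{cw15}.

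For \eqref{cw18}, I would use the same freezing, but now apply local Schauder estimates for $\frac{1}{T}-\nabla\cdot a(x_0)\nabla$, which follow from the classical constant-coefficient Schauder theory after verifying that the relevant bounds are uniform in $T\geq 1$ (the symbol of the massive resolvent is smooth with uniform seminorms as $T\uparrow\infty$). On $\Boule_\rho(x_0)$ this gives a bound of the form
\begin{equation*}
\bigl\|\bigl(\tfrac{u}{\sqrt{T}},\nabla u\bigr)\bigr\|_{\CC^\alpha(\Boule_{\rho/2}(x_0))}
\lesssim \rho^{-\alpha}\bigl\|\bigl(\tfrac{u}{\sqrt{T}},\nabla u\bigr)\bigr\|_{\LL^\infty(\Boule_\rho(x_0))}
+\|f\|_{\CC^\alpha(\Boule_\rho(x_0))}
+c^\alpha\|\nabla u\|_{\CC^\alpha(\Boule_\rho(x_0))},
\end{equation*}
the last term being absorbed as before. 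The $\LL^\infty$ term is upgraded to an $\LL^q$ bound by applying \eqref{cw15} with $p$ chosen just above $d$ so that Morrey's embedding converts $\LL^p$-integrability of the gradient into an $\LL^\infty$-bound; patching to $\Boule_1$ and tracking the accumulated powers of $\|a\|_{\CC^\alpha(\Boule_2)}$ (one factor of $\rho^{-1}\sim\|a\|_{\CC^\alpha}^{1/\alpha}$ from each scale reduction, plus the exponent from the \eqref{cw15} step) produces the stated exponent $\frac{1}{\alpha}(\frac{d}{q}+1)$.

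The main obstacle will be the bookkeeping of the polynomial exponent in $\|a\|_{\CC^\alpha(\Boule_2)}$: several steps each contribute a power (the scale reduction, the Morrey embedding, the constant-coefficient Schauder constant), and the challenge is to arrange them so that no unnecessary factors accumulate. A clean way is to first derive \eqref{cw15} cleanly, then invoke it \emph{once} with $p$ chosen optimally to produce the needed $\LL^\infty$ bound for \eqref{cw18}, rather than iterating. Uniformity in $T\ge 1$ is essentially automatic since, on the scales $\le 1$ we work on, the zero-order term $\frac{1}{T}$ is a lower-order (and ellipticity-preserving) perturbation, and the Caccioppoli estimate \eqref{Caccioppoli3} already handles it cleanly.
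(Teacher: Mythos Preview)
Your strategy (freeze $a$ at a point, treat $(a-a(x_0))\nabla u$ as a small perturbation, cover $\Boule_1$ by balls of radius $\rho\sim\|a\|_{\CC^\alpha}^{-1/\alpha}$) is a legitimate alternative to the paper's route, which instead first proves the estimate under the normalization $[a]_{\CC^\alpha(\Boule_2)}\le 1$ by citing standard variable-coefficient CZ and Schauder theory as a black box (via a Dirichlet splitting $u=v+w$), and then recovers the polynomial dependence on $\|a\|_{\CC^\alpha}$ by a simple rescaling $x\mapsto Rx$ with $R\sim\|a\|_{\CC^\alpha}^{-1/\alpha}$. The paper's approach buys you freedom from any absorption argument at the outer level: once the rescaled estimate holds on each ball $\Boule_R(z_n)$, one just takes an $\ell^p$-sum (for \eqref{cw15}) or a maximum plus a telescoping trick (for \eqref{cw18}).

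Your proposal has two gaps. First, the absorption step does not work as written: after summing the $p$-th powers over a Vitali cover of $\Boule_1$ by balls $\Boule_{\rho/2}(x_k)$, the perturbation term $c^\alpha\|\nabla u\|_{\LL^p(\Boule_\rho(x_k))}$ lives on the larger balls $\Boule_\rho(x_k)$, which spill out of $\Boule_1$. You therefore only obtain control of $\|\nabla u\|_{\LL^p(\Boule_1)}$ in terms of $c^\alpha\|\nabla u\|_{\LL^p(\Boule_{1+\rho})}$, and the latter cannot be absorbed into the former. The usual remedy is an iteration lemma over a continuum of intermediate radii (as in Giaquinta), but this also requires an a priori qualitative $\LL^p$ bound on $\nabla u$ in $\Boule_2$ to make the absorption rigorous; neither ingredient is mentioned. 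Second, for \eqref{cw18} the passage from local $\CC^\alpha$ bounds on balls $\Boule_R(z_n)$ to a global $\CC^\alpha$ bound on $\Boule_1$ is not a mere ``patching'': the H\"older seminorm over $\Boule_1$ involves pairs of points at distance up to $2$, and one needs the telescoping inequality $\|h\|_{\CC^\alpha(\Boule_1)}\lesssim R^{\alpha-1}\max_n\|h\|_{\CC^\alpha(\Boule_R(z_n))}$. This factor $R^{\alpha-1}\sim\|a\|_{\CC^\alpha}^{\frac{1}{\alpha}-1}$ is exactly what produces the prefactor of $\|f\|_{\CC^\alpha}$ in \eqref{cw18}; without it your exponent bookkeeping will not close.
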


    In case of $g=0$ and $f=0$,
    since $T \geq 1$, \eqref{cw18} implies
    \begin{equation}\label{Num:031_bis}
      \begin{aligned}
        \big\|\big(\frac{u}{\sqrt{T}},\nabla u\big)\big\|_{\LL^\infty(\Boule_{1})} 
        \lesssim_{d,\lambda,\alpha,q}~& \|a \|_{\CC^{\alpha}(\Boule_2)}^{\frac{1}{\alpha}(\frac{d}{q}+1)} \big\|\big(\frac{u}{\sqrt{T}},\nabla u\big)\big\|_{\LL^q(\Boule_2)}.
      \end{aligned}
    \end{equation}

\begin{proof}
  The strategy of the proof is the following: First, assuming the additional condition $[a]_{\CC^{\alpha}(\Boule_2)}\le 1$, we prove in Step 1 and 2 that  \eqref{cw15} and \eqref{cw18} hold.
  In Step 3, we make use of a rescaling in order to apply the previous result and finally get \eqref{cw15} and \eqref{cw18} in full generality.
  
  \parag{Step 1: Proof of statement under an additional assumption} Provided 
    \begin{align}\label{cw08}
      [a]_{\CC^{\alpha}(\Boule_2)}\le 1
    \end{align}
    we claim
    \begin{align}\label{cw01}
      \big\|\big(\frac{u}{\sqrt{T}},\nabla u\big)\big\|_{\LL^p(\Boule_1)}\lesssim
      \big\|\big(\frac{u}{\sqrt{T}},\nabla u\big)\big\|_{\LL^q(\Boule_2)}+ \big\|\big(\frac{g}{\sqrt{T}},f\big)\big\|_{\LL^p(\Boule_2)},
    \end{align}
    and, in case of $g\equiv0$,
    \begin{align}\label{cw02}
      \big\|\big(\frac{u}{\sqrt{T}},\nabla u\big)\big\|_{\CC^{\alpha}(\Boule_1)}\lesssim
      \big\|\big(\frac{u}{\sqrt{T}},\nabla u\big)\big\|_{\LL^q(\Boule_2)}+ \|f\|_{\CC^{\alpha}(\Boule_2)}.
    \end{align}

    Here comes the argument: We split $u=v+w$ with help of the Dirichlet problem 
    \begin{align}\label{cw10}
      \frac{1}{T}v-\nabla\cdot a\nabla v=\frac{1}{T}g+\nabla\cdot f\;\;\mbox{in}\;\Boule_2,\quad
      v=0\;\;\mbox{on}\;\partial \Boule_2,
    \end{align}
    so that in view of (\ref{cw09}), $w$ satisfies
    \begin{align}\label{cw07}
      \frac{1}{T}w-\nabla\cdot a\nabla w=0\quad\mbox{in}\;\Boule_2.
    \end{align}
    By standard CZ and Schauder theory for (\ref{cw10})  (see  \cite[Th.\ 7.2 p.\ 140]{Giaquinta}, and \cite[Chap.\ 8 p.\ 177-218]{GT} or \cite[Chap.\ 5 p.\ 75-96]{Giaquinta}, respectively), which is not affected by the presence of the massive term with $T\ge 1$ (as for instance can be seen by expressing the solution operator in terms of the semi-group and appealing to the parabolic version of the theory), 
    we have in case of $g\equiv 0$,
    \begin{align}\label{cw11}
      \|\nabla v\|_{\LL^p(\Boule_2)}&\lesssim\|f\|_{\LL^p(\Boule_2)}\quad\mbox{and}\quad
      \|\nabla v\|_{\CC^{\alpha}(\Boule_2)}\lesssim\|f\|_{\CC^{\alpha}(\Boule_2)}.
    \end{align}
    Note that thanks to our assumption (\ref{cw08}),
    the implicit constant depends only on $d,\lambda,p,\alpha$.
    In the presence of $g$, we solve the auxiliary Dirichlet problem $-\triangle v'=\frac{1}{T}g$ on $\Boule_2$, so that $f':=-\nabla v'$ satisfies $\nabla\cdot f'=\frac{1}{T}g$.
    By constant-coefficient CZ theory, we have $\|\nabla f'\|_{\LL^p(\Boule_2)}\lesssim\|\frac{1}{T}g\|_{\LL^p(\Boule_2)}$.
    In conjunction with the Poincar\'e estimate for $v$, and the Poincar\'e-Wirtinger estimate for $f'$ (which only matters up to a constant), we may upgrade the first item in (\ref{cw11}) to
    \begin{align*}
      \lefteqn{\|(\frac{v}{\sqrt{T}},\nabla v)\|_{\LL^p(\Boule_2)}
      \stackrel{T\ge 1}{\le}\|(v,\nabla v)\|_{\LL^p(\Boule_2)}\lesssim\|\nabla v\|_{\LL^p(\Boule_2)}}
      \\
      &\qquad\lesssim\|(f',f)\|_{\LL^p(\Boule_2)}\lesssim\|(\frac{g}{T},f)\|_{\LL^p(\Boule_2)}
      \stackrel{T\ge 1}{\le}\big\|\big(\frac{g}{\sqrt{T}},f\big)\big\|_{\LL^p(\Boule_2)}.
    \end{align*}
    Since $\|\nabla v\|_{\CC^0(\Boule_2)}$ controls the Lipschitz constant of $v$,   the second item in (\ref{cw11}) obviously upgrades to
    \begin{align*}
      \|(\frac{v}{\sqrt{T}},\nabla v)\|_{\CC^{\alpha}(\Boule_2)}&\lesssim\|f\|_{\CC^{\alpha}(\Boule_2)}.
    \end{align*}
    It remains to appeal to (\ref{cw06}) in Step 2 to obtain (\ref{cw01}) and (\ref{cw02})
    by the triangle inequality.
  
  \parag{Step 2: Inner regularity theory} Suppose that $w$ satisfies (\ref{cw07}).
    Then under the assumption (\ref{cw08}) we claim that for any $p,q\in(1,\infty)$
    \begin{align}\label{cw06}
      \big\|\big(\frac{w}{\sqrt{T}},\nabla w\big)\big\|_{\LL^p(\Boule_1)}\lesssim
      \big\|\big(\frac{w}{\sqrt{T}},\nabla w\big)\big\|_{\CC^\alpha(\Boule_1)}\lesssim
      \big\|\big(\frac{w}{\sqrt{T}},\nabla w\big)\big\|_{\LL^q(\Boule_2)}.
    \end{align}
    The first estimate is trivial. 
    For the second estimate, the presence of the massive term in \eqref{cw07} requires a classical bootstrap argument that we outline for the convenience of the user. We split $w=w'+(w-w')$ once more according to
    \begin{align}\label{cw05}
      -\nabla\cdot a\nabla w'=-\frac{1}{T}w\;\;\mbox{in}\;\Boule_2,\quad
      w'=0\;\;\mbox{on}\;\partial \Boule_2.
    \end{align}
    We have by CZ theory that
    $\|\nabla w'\|_{\LL^{q'}(\Boule_2)}\lesssim\|\frac{w}{T}\|_{\LL^q(\Boule_2)}$, where
    the prime on an exponent indicates the improvement coming from Sobolev's embedding,
    that is,
    \begin{align*}
      q'=\frac{dq}{d-q}
    \end{align*}
    (which we only use as long as $q<d$).
    For the second contribution $w-w'$, which by (\ref{cw07}) and (\ref{cw05}) satisfies the homogeneous equation $-\nabla\cdot a\nabla(w-w')=0$ in $\Boule_2$, we have by CZ-based inner regularity theory $\|\nabla(w-w')\|_{\LL^{q'}(\Boule_1)}\lesssim \|\nabla(w-w')\|_{\LL^q(\Boule_2)}$.
    Appealing to the triangle inequality and H\"older's inequality in form of $\|\nabla w'\|_{\LL^q(\Boule_2)}$ $\lesssim\|\nabla w'\|_{\LL^{q'}(\Boule_2)}$, both estimates add up to $\|\nabla w\|_{\LL^{q'}(\Boule_1)}\lesssim\|\frac{w}{T}\|_{\LL^q(\Boule_2)}+\|\nabla w\|_{\LL^q(\Boule_2)}$.
    In combination with the Poincar\'e estimate $\|w\|_{\LL^{q'}(\Boule_1)}\lesssim \|\nabla w\|_{\LL^{q'}(\Boule_1)}+\|w\|_{\LL^q(\Boule_1)}$, and using $T\ge 1$, this yields the iterable form
    \begin{align*}
      \big\|\big(\frac{w}{\sqrt{T}},\nabla w\big)\big\|_{\LL^{q'}(\Boule_1)}\lesssim 
      \big\|\big(\frac{w}{\sqrt{T}},\nabla w\big)\big\|_{\LL^q(\Boule_2)}.
    \end{align*}
    We now consider an exponent $p>d$ and set
    \begin{align*}
      \alpha'=1-\frac{d}{p}
    \end{align*}
    (which we use only for $\alpha'\le\alpha$).
    In this situation, in the splitting introduced in (\ref{cw05}), we may pass from
    CZ-theory to Schauder theory to obtain $\|\nabla w'\|_{\CC^{\alpha'}(\Boule_2)}$ $\lesssim\|\frac{w}{T}\|_{\LL^p(\Boule_2)}$ and
    $\|\nabla(w-w')\|_{\CC^{\alpha'}(\Boule_1)}\lesssim\|\nabla(w-w')\|_{\LL^q(\Boule_2)}$.
    This combines to $\|\nabla w\|_{\CC^{\alpha'}(\Boule_1)}$
    $\lesssim\|\frac{w}{T}\|_{\LL^p(\Boule_r)}+\|\nabla w\|_{\LL^q(\Boule_r)}$, and as above
    upgrades to
    \begin{align*}
      \big\|\big(\frac{w}{\sqrt{T}},\nabla w\big)\big\|_{\CC^{\alpha'}(\Boule_1)}\lesssim
      \big\|\big(\frac{w}{\sqrt{T}},\nabla w\big)\big\|_{\LL^q(\Boule_2)}.
    \end{align*}
    In one final step of Schauder theory, applied to both contributions of the splitting (\ref{cw05}), 
    we obtain
    \begin{align*}
      \big\|\big(\frac{w}{\sqrt{T}},\nabla w\big)\big\|_{\CC^{\alpha}(\Boule_1)}\lesssim
      \big\|\big(\frac{w}{\sqrt{T}},\nabla w\big)\big\|_{\CC^{\alpha'}(\Boule_2)}.
    \end{align*}
    An inspection of the above arguments shows that the pair of balls $(\Boule_1,\Boule_2)$ may be replaced by $(\Boule_r,\Boule_R)$ with
    $1\le r<R\le 2$, at the expense of a constant that depends on $R-r$.
    Hence a finite iteration indeed yields (\ref{cw06}).

  \parag{Step 3: Removing the additional assumption (\ref{cw18})}
    For any radius $0<R\le 1$ and point $z\in\mathbb{R}^d$, the change of variables $x= R\hat x+z$, 
    alongside with $\hat T=R^{-2}T\ge 1$, shows the statement of Step 1 generalizes to:
    Provided
    \begin{align}\label{cw12}
    R^\alpha[a]_{\CC^{\alpha}(\Boule_{2R}(z))}\le 1
    \end{align}
    we have
    \begin{equation}\label{cw13}
    \begin{aligned}
    &\big\|\big(\frac{u}{\sqrt{T}},\nabla u\big)\big\|_{\LL^p(\Boule_R(z))}
    \\
    &\qquad\qquad\lesssim
    R^{\frac{d}{p}-\frac{d}{q}}\big\|\big(\frac{u}{\sqrt{T}},\nabla u\big)\big\|_{\LL^q(\Boule_{2R}(z))}
    +\|(\frac{1}{\sqrt{T}},f)\|_{\LL^p(\Boule_{2R}(z))},  
    \end{aligned}
    \end{equation}
    and, in case of $g\equiv0$,
    \begin{equation}\label{cw17}
      \begin{aligned}
        \big\|\big(\frac{u}{\sqrt{T}},\nabla u\big)\big\|_{\CC^{\alpha}(\Boule_R(z))}
        \lesssim
	R^{-\alpha-\frac{d}{q}}\big\|\big(\frac{u}{\sqrt{T}},\nabla u\big)\big\|_{\LL^q(\Boule_{2R}(z))}
	+\|f\|_{\CC^{\alpha}(\Boule_{2R}(z))}.
      \end{aligned}
    \end{equation}

    Selecting
    $R\le\frac{1}{2}$ to be a small fraction of $\|a\|_{\CC^\alpha(\Boule_2)}^{-\frac{1}{\alpha}}\lesssim \lambda^{\frac{1}{\alpha}}$ (recall \eqref{Ellipticite}),
    we may cover $\Boule_1$ by the union of $N$ balls $\Boule_R(z_n)$, $n=1,\cdots,N$, with
    \begin{align}\label{cw14}
      R^{-1}\sim\|a\|_{\CC^\alpha(\Boule_2)}^{\frac{1}{\alpha}}
    \end{align}
    in such a way that (\ref{cw12}) is satisfied for every $z=z_n\in \Boule_1$.
    Taking the $\ell^p$-norm of (\ref{cw13}) (with $z=z_n$) in $n=1,\cdots,N$, we obtain 
    \begin{align}\label{cw16}
      \big\|\big(\frac{u}{\sqrt{T}},\nabla u\big)\big\|_{\LL^p(\Boule_1)}
      \lesssim
      R^{\frac{d}{p}-\frac{d}{q}}\big\|\big(\frac{u}{\sqrt{T}},\nabla u\big)\big\|_{\LL^q(\Boule_2)}
      +\big\|\big(\frac{g}{\sqrt{T}},f\big)\big\|_{\LL^p(\Boule_2)},
    \end{align}
    where on the first r.~h.~s.~term we used that (recall $p\ge q$) 
    \begin{align*}
      \big(\sum_{n=1}^N\|f\|_{\LL^q(\Boule_{2R}(z_n))}^p\big)^\frac{1}{p}
      \le\big(\sum_{n=1}^N\|f\|_{\LL^q(\Boule_{2R}(z_n))}^q\big)^\frac{1}{q}
      \lesssim \|f\|_{\LL^q(\Boule_2)}.
    \end{align*}
    Now (\ref{cw15}) follows from inserting (\ref{cw14}) into (\ref{cw16}).

    We now turn to the Schauder estimate \eqref{cw18}.
    Using a telescoping sum we learn the norm relation
    \begin{align*}
      \|u\|_{\CC^\alpha(\Boule_1)}\lesssim R^{\alpha-1}\max_{n=1,\cdots,N}\|u\|_{\CC^\alpha(\Boule_R(z_n))}.
    \end{align*}
    Hence taking the maximum of (\ref{cw17}) (with $z=z_n$) in $n=1,\cdots,N$ yields
    \begin{align*}
      \big\|\big(\frac{u}{\sqrt{T}},\nabla u\big)\big\|_{\CC^{\alpha}(\Boule_1)}
      \lesssim
      R^{-1-\frac{d}{q}}\big\|\big(\frac{u}{\sqrt{T}},\nabla u\big)\big\|_{\LL^q(\Boule_{2})}
      +R^{\alpha-1}\|f\|_{\CC^{\alpha}(\Boule_{2})}.
    \end{align*}
    Finally, (\ref{cw18}) follows from inserting (\ref{cw14}) into the above estimate.
\end{proof}
\end{document}